\theoremstyle{plain}
\newtheorem{theorem}{Theorem}[section]
\newtheorem{lemma}[theorem]{Lemma}
\newtheorem{proposition}[theorem]{Proposition}
\newtheorem{corollary}[theorem]{Corollary}
\theoremstyle{definition}
\newtheorem{definition}[theorem]{Definition}
\newtheorem{question}[theorem]{Question}
\newcommand{\re}{\upharpoonright}
\newcommand{\id}{\mathsf{id}}
\newcommand{\bD}{\mathbf{\Delta}}
\newcommand{\bG}{\mathbf{\Gamma}}
\newcommand{\bGc}{\mathbf{\check{\Gamma}}}
\newcommand{\bL}{\mathbf{\Lambda}}
\newcommand{\bLc}{\mathbf{\check{\Lambda}}}
\newcommand{\ZFC}{\mathsf{ZFC}}
\newcommand{\ZF}{\mathsf{ZF}}
\newcommand{\DC}{\mathsf{DC}}
\newcommand{\NSD}{\mathsf{NSD}}
\newcommand{\PU}{\mathsf{PU}}
\newcommand{\AD}{\mathsf{AD}}
\newcommand{\BP}{\mathsf{BP}}
\newcommand{\SD}{\mathsf{SD}}
\newcommand{\W}{\mathsf{W}}
\newcommand{\Aa}{\mathcal{A}}
\newcommand{\BB}{\mathcal{B}}
\newcommand{\CC}{\mathcal{C}}
\newcommand{\FF}{\mathcal{F}}
\newcommand{\HH}{\mathcal{H}}
\newcommand{\UU}{\mathcal{U}}
\newcommand{\PP}{\mathcal{P}}
\newcommand{\RRR}{\mathbb{R}}
\begin{document}

\title[Every zero-dimensional homogeneous space is strongly homogeneous]{Every zero-dimensional homogeneous space is strongly homogeneous under determinacy}

\author{Rapha\"el Carroy}
\address{Department of Mathematics ``Giuseppe Peano''
\newline\indent Palazzo Campana, University of Turin
\newline\indent Via Carlo Alberto 10
\newline\indent 10123 Turin, Italy}
\email{raphael.carroy@unito.it}
\urladdr{http://www.logique.jussieu.fr/\~{}carroy/indexeng.html}

\author{Andrea Medini}
\address{Kurt G\"odel Research Center for Mathematical Logic
\newline\indent Institute of Mathematics, University of Vienna
\newline\indent Augasse 2-6, UZA 1 - Building 2
\newline\indent 1090 Vienna, Austria}
\urladdr{http://www.logic.univie.ac.at/\~{}medinia2/}

\author{Sandra M\"uller}
\address{Kurt G\"odel Research Center for Mathematical Logic
\newline\indent Institute of Mathematics, University of Vienna
\newline\indent Augasse 2-6, UZA 1 - Building 2
\newline\indent 1090 Vienna, Austria}
\email{mueller.sandra@univie.ac.at}
\urladdr{https://muellersandra.github.io}

\subjclass[2010]{54H05, 03E15, 03E60.}

\keywords{Homogeneous, strongly homogeneous, h-homogeneous, zero-dimensional, determinacy, Wadge theory, Hausdorff operation, $\omega$-ary Boolean operation.}

\thanks{The first-listed author acknowledges the support of the FWF grant P 28153-N35. The second-listed author acknowledges the support of the FWF grant P 30823-N35. The third-listed author (formerly known as Sandra Uhlenbrock) acknowledges the support of the FWF grant P 28157-N35. The authors are grateful to Alessandro Andretta for valuable bibliographical help, and to Alain Louveau for allowing them to use his unpublished book \cite{louveaub}.}

\date{February 26, 2020}

\begin{abstract}
All spaces are assumed to be separable and metrizable. We show that, assuming the Axiom of Determinacy, every zero-dimensional homogeneous space is strongly homogeneous (that is, all its non-empty clopen subspaces are homeomorphic), with the trivial exception of locally compact spaces. In fact, we obtain a more general result on the uniqueness of zero-dimensional homogeneous spaces which generate a given Wadge class. This extends work of van Engelen (who obtained the corresponding results for Borel spaces), complements a result of van Douwen, and gives partial answers to questions of Terada and Medvedev.
\end{abstract}

\maketitle

\section{Introduction}

Throughout this article, unless we specify otherwise, we will be working in the theory $\ZF+\DC$, that is, the usual axioms of Zermelo-Fraenkel (without the Axiom of Choice) plus the principle of Dependent Choices (see Section 2 for more details). By \emph{space} we will always mean separable metrizable topological space, unless we specify otherwise. A space $X$ is \emph{homogeneous} if for every $x,y\in X$ there exists a homeomorphism $h:X\longrightarrow X$ such that $h(x)=y$. For example, using translations, it is easy to see that every topological group is homogeneous (as \cite[Corollary 3.6.6]{vanengelent} shows, the converse is not true, not even for zero-dimensional Borel spaces). Homogeneity is a classical notion in topology, which has been studied in depth (see for example \cite{arkhangelskiivanmill}). In particular, in his remarkable doctoral thesis \cite{vanengelent} (see also \cite{vanengelenpr} and \cite{vanengelentr}), Fons van Engelen gave a complete classification of the homogeneous zero-dimensional Borel spaces. In fact, as we will make more precise, this article is inspired by his work and relies heavily on some of his techniques.

A space $X$ is \emph{strongly homogeneous} (or \emph{h-homogeneous}) if every non-empty clopen subspace of $X$ is homeomorphic to $X$. This notion has been studied by several authors, both ``instrumentally'' and for its own sake (see the list of references in \cite{medinia}). It is well-known that every zero-dimensional strongly homogeneous space is homogeneous (see for example \cite[1.9.1]{vanengelent} or \cite[Proposition 3.32]{medinit}). Our main result shows that, under the Axiom of Determinacy (briefly, $\AD$) the converse also holds (with the trivial exception of locally compact spaces, see Proposition \ref{locallycompact}). For the proof, see Corollary \ref{main}.

\begin{theorem}\label{mainintro}
Assume $\AD$. If $X$ is a zero-dimensional homogeneous space that is not locally compact then $X$ is strongly homogeneous.
\end{theorem}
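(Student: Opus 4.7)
The plan is to deduce Theorem \ref{mainintro} from the more general uniqueness result Corollary \ref{main}, which (as previewed in the abstract) classifies zero-dimensional homogeneous spaces by the Wadge class they generate. Given such a classification, strong homogeneity of a non-locally-compact $X$ should follow by comparing $X$ with an arbitrary non-empty clopen subspace.

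Concretely, fix $X$ as in the hypothesis, embed it as a subspace of a zero-dimensional Polish space (say $2^\omega$), and let $\bG$ be the Wadge class generated by $X$. For a non-empty clopen $U\subseteq X$, the goal is $U\cong X$. The space $U$ is again zero-dimensional, and it cannot be locally compact, for otherwise local compactness at a point of $U$ would propagate, via self-homeomorphisms of $X$, to every point of $X$, contradicting the hypothesis. Moreover, $U$ should generate the same Wadge class $\bG$ as $X$: using homogeneity to translate $U$ around inside $X$ yields clopen copies of $U$ covering $X$, and this is exactly the kind of data that produces Wadge reductions in both directions between $U$ and $X$.

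The main obstacle is that $U$ need not itself be homogeneous, so Corollary \ref{main} is not applicable in a naive ``compare $U$ and $X$ directly'' form. I expect the right way around this is to formulate Corollary \ref{main} as a uniqueness statement for a \emph{canonical} zero-dimensional non-locally-compact strongly homogeneous representative $Y(\bG)$ attached to each admissible Wadge class $\bG$, built via the defining Hausdorff (i.e., $\omega$-ary Boolean) operation of $\bG$ applied to a self-similar Cantor scheme so that strong homogeneity of $Y(\bG)$ is visible by construction. The substantive content is then: every zero-dimensional homogeneous non-locally-compact space generating $\bG$ is homeomorphic to $Y(\bG)$. Applied to $X$, this immediately gives $X\cong Y(\bG)$, hence $X$ is strongly homogeneous. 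The deepest step, namely extending van Engelen's Borel classification to all Wadge classes under $\AD$, is what Corollary \ref{main} encapsulates; the principal tools there should be Wadge's Lemma (yielding the wellordered hierarchy of Wadge classes under $\AD$), the description of Wadge classes via $\omega$-ary Boolean operations, and a back-and-forth construction between $X$ and $Y(\bG)$ that matches clopen pieces by comparing their Wadge-theoretic invariants, with the non-local-compactness hypothesis used to rule out degenerate obstructions (such as compact or discrete pieces) that would block the back-and-forth.
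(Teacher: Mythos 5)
The central gap is your pivot to a canonical strongly homogeneous representative $Y(\bG)$ attached to each admissible Wadge class. No such existence result is available, and nothing about applying the Hausdorff operation defining $\bG$ to a ``self-similar Cantor scheme'' guarantees that the resulting set is homogeneous, let alone strongly homogeneous: producing homogeneous representatives class by class is exactly the classification program of van Engelen, which the paper explicitly does \emph{not} extend beyond the Borel classes (extending Louveau's analysis to all Wadge classes is described there as a hard open problem). The paper never constructs any representative; it only needs the uniqueness statement, Theorem \ref{uniqueness}: two homogeneous \emph{dense} subspaces of $2^\omega$ generating the same Wadge class, both meager or both Baire, are mapped to each other by a homeomorphism of $2^\omega$. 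That theorem is obtained from Steel's Theorem \ref{steeltheorem} after showing $[X]$ is a good, hence reasonably closed, Wadge class. Note also that Corollary \ref{main} is verbatim the same statement as Theorem \ref{mainintro} (the genuinely more general result is Theorem \ref{uniqueness}), so invoking Corollary \ref{main} as a black box already containing ``$X\approx Y(\bG)$'' begs the question.

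Moreover, the obstacle that motivates your pivot is illusory: a non-empty clopen subspace $U$ of a zero-dimensional homogeneous space $X$ \emph{is} homogeneous. Given distinct $x,y\in U$ and a homeomorphism $h$ of $X$ with $h(x)=y$, choose a clopen $W\ni x$ with $W\subseteq U$, $h[W]\subseteq U$ and $W\cap h[W]=\varnothing$, and swap $W$ with $h[W]$ via $h$ and $h^{-1}$, fixing the rest of $U$. The paper's proof runs exactly along the route you discarded: assume $X$ is dense in $2^\omega$, split off the case $X\in\Delta(\mathsf{D}_\omega(\mathbf{\Sigma}^0_2(2^\omega)))$ (handled by van Engelen's Borel theorem, since Theorem \ref{uniqueness} needs $X$ above that complexity --- a case your outline ignores), and for $s\in 2^{<\omega}$ transport $X\cap[s]$ by a homeomorphism $[s]\longrightarrow 2^\omega$ to a homogeneous dense $Y\subseteq 2^\omega$. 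One then checks $[Y]=[X]$; contrary to your remark, this is not automatic from covering $X$ by clopen copies of $U$: the reduction of $X$ to such a copy uses Lemma \ref{openimplieswhole}, i.e.\ closure of $[X]$ under partitioned unions with $\mathbf{\Delta}^0_2$ pieces, which is part of the proof that $[X]$ is good (Theorem \ref{classesofhomogeneousspaces}). Finally Theorem \ref{uniqueness} gives $X\cap[s]\approx X$, and Terada's Theorem \ref{pibase} (plus non-compactness) upgrades this to strong homogeneity. So the missing ideas in your proposal are precisely the homogeneity of clopen subspaces, the good-Wadge-class machinery needed for $[Y]=[X]$, and the low-complexity and Terada steps; the canonical-representative construction you propose instead is unsupported and strictly harder than what is required.
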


The above theorem follows from a uniqueness result about zero-dimensional homogeneous spaces, namely Theorem \ref{uniqueness}, which is of independent interest.\footnote{\,In fact, the only consequences of $\AD$ that Theorem \ref{uniqueness} (hence Theorem \ref{mainintro}) requires are $\BP$ (see Section 2) and Lemma \ref{wadgelemma}.} This theorem essentially states that, for every sufficiently high level of complexity $\bG$, there are at most two homogeneous zero-dimensional spaces of complexity exactly $\bG$ (one meager and one Baire).

Our fundamental tool will be Wadge theory, which was founded by William Wadge in his doctoral thesis \cite{wadget} (see also \cite{wadgea}), and has become a classical topic in descriptive set theory. We believe that \cite[Theorem 2.4]{vanengelenmillersteel} and our results are the only applications to topology of an analysis of the full (as opposed to just Borel) Wadge hierachy.\footnote{\,While all of these results ultimately rely on \cite[Theorem 2]{steela}, the techniques used in the proof of \cite[Theorem 2.4]{vanengelenmillersteel} are very different from those used here.} In fact, most of this article (Sections 3 to 13) is purely Wadge-theoretic in character. The ultimate goal of the Wadge-theoretic portion of the paper is to show that good Wadge classes are closed under intersection with $\mathbf{\Pi}^0_2$ sets (see Section 12), hence they are reasonably closed (see Section 13). Homogeneity comes into play in Section 14, where we show that $[X]$ is a good Wadge class whenever $X$ is a homogeneous space of sufficiently high complexity. This will allow us to use a theorem of Steel from \cite{steela}, which will in turn yield the uniqueness result mentioned above (see Section 15). The following diagram summarizes the structure of the proof of our main result. In the preceding sections, the necessary tools are developed. More specifically, Section 4 is devoted to the analysis of the selfdual Wadge classes, Sections 5 to 8 develop the machinery of relativization through Hausdorff operations, and Sections 9 to 11 develop the notions of level and expansion. While none of the ideas contained in these preliminary sections are new (except, to the best of our knowledge, Theorem \ref{closureGdelta}), satisfactory references are hard to come by, especially for the required level of generality. For this reason, we will always restate the needed results (and sometimes give full proofs).

\smallskip
\begin{center}
$
\xymatrix{
\text{$\bG=[X]$ for some homogeneous $X\subseteq 2^\omega$}\ar[d]\\
\text{$\bG$ is a good Wadge class}\ar[d]\\
\text{$\bG$ is closed under $\cap\,\mathbf{\Pi}^0_2$ and $\cup\,\mathbf{\Sigma}^0_2$}\ar[d]\\
\text{$\bG$ is reasonably closed}\ar[d]\\
\text{Steel's theorem can be applied to $\bG$}
}
$
\end{center}
\smallskip

The application of Wadge theory to the study of homogeneous spaces was pioneered by van Engelen in \cite{vanengelent}, where he obtained the classification mentioned above. As a corollary (namely, \cite[Corollary 4.4.6]{vanengelent}), he obtained the Borel version of Theorem \ref{mainintro}. The reason why his results are limited to Borel spaces is that they are all based on the fine analysis of the Borel Wadge classes given by Louveau in \cite{louveaua}. Fully extending this analysis beyond the Borel realm appears to be a very hard problem (although partial results have been obtained in \cite{fournier}). Here, we will follow a different strategy, and we will ``substitute'' facts from \cite{louveaua} about Borel Wadge classes with more general results about arbitrary Wadge classes (under $\AD$). Furthermore, since most of the literature on Wadge theory only deals with $\omega^\omega$ as the ambient space, while Steel's theorem is stated for $2^\omega$, we decided to work in the context of arbitrary zero-dimensional uncountable Polish spaces. With regard to these issues, Louveau's book \cite{louveaub} and Van Wesep's results on Hausdorff operations from \cite{vanwesept} were crucial. For other applications of Wadge theory to topology, see the characterizations of Borel filters and semifilters given respectively in \cite{vanengelena} and \cite{medinis}.

At this point, it is natural to ask whether assuming $\AD$ is really necessary in the above results. As the following theorem shows, the answer is ``yes''. This result was essentially proved in \cite{vandouwen}, but our exposition is based on \cite[Theorem 5.1]{vanmill}. Following \cite{vanmill}, we will say that $X\subseteq\RRR$ is a \emph{bi-Bernstein set}\footnote{\,These sets are commonly referred to simply as \emph{Bernstein sets}.} if $K\cap X\neq\varnothing$ and $K\cap (\RRR\setminus X)\neq\varnothing$ for every $K\subseteq\RRR$ that is homeomorphic to $2^\omega$.

\begin{theorem}[van Douwen]\label{vandouwentheorem}
There exists a $\ZFC$ example $X$ of a homogeneous zero-dimensional space that is not locally compact and not strongly homogeneous.
\end{theorem}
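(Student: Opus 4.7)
The plan is to construct such an $X \subseteq \RRR$ in $\ZFC$ via transfinite recursion of length $\cccc$, aiming for a subspace that is simultaneously (i)~a subgroup of $(\RRR, +)$, (ii)~a bi-Bernstein set, and (iii)~equipped with a clopen subspace not homeomorphic to $X$. First I would enumerate the copies of $2^\omega$ in $\RRR$ as $\{K_\alpha : \alpha < \cccc\}$ and build inductively increasing chains of countable subgroups $G_\alpha \subseteq \RRR$ and countable ``forbidden'' sets $F_\alpha \subseteq \RRR$, maintaining $G_\alpha \cap F_\alpha = \varnothing$ at every stage. At stage $\alpha$, one picks a fresh $x_\alpha \in K_\alpha$ so that the subgroup $\langle G_{<\alpha} \cup \{x_\alpha\} \rangle$ is disjoint from $F_{<\alpha}$ (this rules out only countably many values of $x_\alpha$ while $|K_\alpha| = \cccc$), then picks a fresh $y_\alpha \in K_\alpha \setminus \langle G_{<\alpha} \cup \{x_\alpha\} \rangle$ and appends $y_\alpha$ to the forbidden set. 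Setting $X := \bigcup_{\alpha < \cccc} G_\alpha$ yields a subgroup of $\RRR$ which meets every $K_\alpha$ and whose complement meets every $K_\alpha$ as well.

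Homogeneity of $X$ is immediate from the translations $x \mapsto x + g$, $g \in X$, being autohomeomorphisms. Since $X$ contains no copy of $2^\omega$, it contains no non-degenerate connected subset (every connected subset of $\RRR$ with more than one point is an interval and hence contains a Cantor set), so $X$ is totally disconnected, whence zero-dimensional as a separable metric space. Non-local-compactness follows because a locally compact separable metric space embeds as a $G_\delta$ in its completion, hence is Polish, and an uncountable Polish space contains a copy of $2^\omega$; since $X$ is uncountable but contains no such copy, $X$ cannot be locally compact.

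The main obstacle is to ensure that $X$ is not strongly homogeneous, and this is where the construction requires extra care, following van Mill's exposition. I would augment the recursion by also enumerating a supply of pairs $(U_\alpha, h_\alpha)$, where $U_\alpha$ is a bounded open subset of $\RRR$ whose endpoints are steered outside $X$ and $h_\alpha$ is a countable code for a candidate homeomorphism between $X$ and the eventual clopen subspace $X \cap U_\alpha$. At stage $\alpha$ one uses the $\cccc$-many degrees of freedom available inside $K_\alpha$ to defeat $h_\alpha$, by selecting $x_\alpha$ or $y_\alpha$ so as to introduce a concrete topological discrepancy between $X$ and $X \cap U_\alpha$ incompatible with $h_\alpha$. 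The delicate point is arranging this diagonalization to mesh with the subgroup closure and with the bi-Bernstein constraints; since only countably many conditions are active at each stage while $\cccc$-many choices remain in each $K_\alpha$, there is enough room to do this, and the resulting $X$ witnesses the theorem.
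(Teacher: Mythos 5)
Your construction of a bi-Bernstein subgroup of $\RRR$, and the deductions of homogeneity, zero-dimensionality and non-local-compactness from it, are fine (though note that ``totally disconnected implies zero-dimensional'' is false for general separable metric spaces -- e.g.\ Erd\H{o}s space -- and you should instead use that the complement of a bi-Bernstein set is dense in $\RRR$, so that intervals with endpoints outside $X$ give a clopen base). The genuine gap is in the only part that is actually hard: ensuring that $X$ is not strongly homogeneous. Your plan to enumerate ``countable codes $h_\alpha$ for candidate homeomorphisms between $X$ and the eventual clopen subspace $X\cap U_\alpha$'' and to ``introduce a concrete topological discrepancy incompatible with $h_\alpha$'' is not an argument. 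First, since $X$ does not yet exist at stage $\alpha$, making sense of such an enumeration already requires Lavrentiev's theorem (every homeomorphism between subsets of $\RRR$ extends to one between $G_\delta$ sets), which you never invoke. Second, and more seriously, the obvious killing move -- put some $x_\alpha$ into $X$ and forbid $f_\alpha(x_\alpha)$ -- collides with the subgroup closure: for maps $f$ that agree on a large set with $x\mapsto nx+g$ ($n\in\ZZZ$, $g$ in the current group), the value $f(x_\alpha)$ lands in $\langle G_{<\alpha}\cup\{x_\alpha\}\rangle$ no matter how $x_\alpha$ is chosen, so it cannot be forbidden, and one needs a separate analysis of which such maps are actually dangerous (e.g.\ $x\mapsto x/2$-type scalings genuinely threaten to send $X\cap(0,1)$ onto $X\cap(0,\tfrac12)$ and must be defeated). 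Saying ``only countably many conditions are active while $\cccc$-many choices remain'' does not address this, because the obstruction is not a counting one.

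This is precisely where van Douwen's idea (as presented in van Mill's Theorem 5.1, which the paper quotes) differs from your sketch: the recursion is organized so that the trace of Lebesgue measure on $X$ becomes a topological invariant, i.e.\ $X$ carries a Borel measure $\overline{\mu}$ (well defined because $X$ is bi-Bernstein) such that any two homeomorphic Borel subspaces of $X$ have equal measure. Once this single rigidity property is secured, non-strong-homogeneity is immediate: choosing $a<b<c$ in $\RRR\setminus X$, the clopen sets $(a,b)\cap X$ and $(a,c)\cap X$ have measures $b-a\neq c-a$ and so are not homeomorphic. Your proposal stops exactly short of the device that makes the diagonalization go through, so as written it does not establish the theorem.
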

\begin{proof}
Let $X$ be the space given by \cite[proof of Theorem 5.1]{vanmill}. Notice that $X$ is homogeneous because $X$ is a subgroup of $\RRR$. Furthermore, $X$ is a bi-Bernstein set by \cite[Proposition 4.5]{vanmill}. It follows that both $X$ and $\RRR\setminus X$ are dense in $\RRR$. In particular, $X$ is zero-dimensional and not locally compact.

Given any Borel subset $B$ of $X$, pick a Borel subset $A$ of $\RRR$ such that $A\cap X=B$, then define $\overline{\mu}(B)=\mu(A)$, where $\mu$ denotes the Lebesgue measure on $\RRR$. Using the fact that $X$ is bi-Bernstein, it is easy to check that $\overline{\mu}$ is a well-defined measure on the Borel subsets of $X$. The crucial property of $\overline{\mu}$, as given by the statement of \cite[Theorem 5.1]{vanmill}, is that if $B$ and $C$ are homeomorphic Borel subspaces of $X$, then $\overline{\mu}(B)=\overline{\mu}(C)$. 

Now pick $a,b,c\in\RRR\setminus X$ such that $a<b<c$. Observe that $U=(a,b)\cap X$ and $V=(a,c)\cap X$ are non-empty clopen subsets of $X$. Furthermore, it is clear from the definition of $\overline{\mu}$ that $\overline{\mu}(U)=b-a\neq c-a=\overline{\mu}(V)$. Therefore $U$ and $V$ are not homeomorphic, which concludes the proof.
\end{proof}

However, we do not know the answer to the following question. Recall that, when $\bG=\mathbf{\Sigma}^1_n$ or $\bG=\mathbf{\Pi}^1_n$ for some $n\geq 1$, a space is $\bG$ if it is homeomorphic to a $\bG$ subspace of some Polish space (see \cite[Section 4]{medinizdomskyy} for a more detailed treatment).
\begin{question}
Assuming $\mathsf{V}=\mathsf{L}$, is it possible to construct a zero-dimensional $\mathbf{\Pi}^1_1$ or $\mathbf{\Sigma}^1_1$ space that is homogeneous, not locally compact, and not strongly homogeneous?
\end{question}
The above question is natural because there are many examples of properties (such as the perfect set property\footnote{\,To see that every space has the perfect set property under $\AD$, proceed as in \cite[Section 21.A]{kechris}. For the counterexample under $\mathsf{V}=\mathsf{L}$, see \cite[Theorem 13.12]{kanamori}.}) that are known to hold for all spaces under $\AD$, for which definable counterexamples can be constructed under $\mathsf{V}=\mathsf{L}$. Notice that $\mathbf{\Pi}^1_1$ and $\mathbf{\Sigma}^1_1$ are optimal by \cite[Corollary 4.4.6]{vanengelent}. For other relevant examples, see \cite[Theorem 2.6]{vanengelenmillersteel}, \cite{miller}, and \cite{vidnyanszky}.

Finally, we mention three applications of Theorem \ref{mainintro}. The first is that Theorem \ref{vandouwentheorem} cannot be proved without using the Axiom of Choice (more precisely, it cannot be proved in $\ZF+\DC$ alone).
The second concerns the following question from \cite[Section 3]{terada} (see \cite[Section 3]{medinia} and \cite[Section 5]{medinivanmillzdomskyy} for more on this topic).
\begin{question}[Terada]\label{teradaquestion}
Is $X^\omega$ strongly homogeneous for every zero-dimensional first-countable space $X$?
\end{question}
Since Lawrence showed that $X^\omega$ is homogeneous for every zero-dimensional space $X$ (see \cite{lawrence}, or \cite{dowpearl} for a more general result), it follows from Theorem \ref{mainintro} that the answer to Question \ref{teradaquestion} in the separable metrizable realm is ``yes'' under $\AD$.\footnote{\,Notice that if a zero-dimensional infinite power is locally compact then it is compact, hence either it has size $1$, or it is homeomorphic to $2^\omega$ by \cite[Theorem 7.4]{kechris}.} As the third application, we obtain that the answer to the following question (see \cite[Question 1]{medvedev}) is also ``yes'' under $\AD$.\footnote{\,Recall that locally compact spaces are Baire.}
\begin{question}[Medvedev]
Is every zero-dimensional meager homogeneous space strongly homogeneous?
\end{question}

\section{Preliminaries and notation}

Let $Z$ be a set, and let $\bG\subseteq\PP(Z)$. Define $\bGc=\{Z\setminus A:A\in\bG\}$. We will say that $\bG$ is \emph{selfdual} if $\bG=\bGc$. Also define $\Delta(\bG)=\bG\cap\bGc$. Given a function $f:Z\longrightarrow W$, $A\subseteq Z$, and $B\subseteq W$, we will use the notation $f[A]=\{f(x):x\in A\}$ and $f^{-1}[B]=\{x\in Z:f(x)\in B\}$.

\begin{definition}[Wadge]
Let $Z$ be a space, and let $A,B\subseteq Z$. We will write $A\leq B$ if there exists a continuous function $f:Z\longrightarrow Z$ such that $A=f^{-1}[B]$.\footnote{\,Wadge-reduction is usually denoted by $\leq_\mathsf{W}$, which allows to distinguish it from other types of reduction (such as Lipschitz-reduction). Since we will not consider any other type of reduction in this article, we decided to simplify the notation.} In this case, we will say that $A$ is \emph{Wadge-reducible} to $B$, and that $f$ \emph{witnesses} the reduction. We will write $A<B$ if $A\leq B$ and $B\not\leq A$. We will write $A\equiv B$ if $A\leq B$ and $B\leq A$.
\end{definition}

\begin{definition}[Wadge]\label{wadgeclassdefinition}
Let $Z$ be a space. Given $A\subseteq Z$, define
$$
[A]=\{B\subseteq Z:B\leq A\}.\footnote{\,Usually, this denotes the \emph{Wadge degree} of $A$, that is $\{B\subseteq Z:B\equiv A\}$. Our notation follows \cite{vanengelenpr}, \cite{vanengelent}, \cite{vanengelena}, and \cite{louveaua}, since these were among our main inspirations and sources. Even the book \cite{louveaub} uses the similar (but slightly more cumbersome) notation $\bG[A]$.}
$$
We will say that $\bG\subseteq\PP(Z)$ is a \emph{Wadge class} if there exists $A\subseteq Z$ such that $\bG=[A]$. We will say that $\bG\subseteq\PP(Z)$ is \emph{continuously closed} if $[A]\subseteq\bG$ for every $A\in\bG$.
\end{definition}

Both of the above definitions depend of course on the space $Z$. Often, for the sake of clarity, we will specify what the ambient space is by saying, for example, that ``$A\leq B$ in $Z$'' or ``$\bG$ is a Wadge class in $Z$''. We will say that $A\subseteq Z$ is \emph{selfdual} if $A\equiv Z\setminus A$ in $Z$. It is easy to check that $A$ is selfdual iff $[A]$ is selfdual. Given a space $Z$, we will also use the following shorthand notation:
\begin{itemize}
\item $\SD(Z)=\{\bG:\bG\textrm{ is a selfdual Wadge class in }Z\}$,
\item $\NSD(Z)=\{\bG:\bG\textrm{ is a non-selfdual Wadge class in }Z\}$.
\end{itemize}

Our reference for descriptive set theory is \cite{kechris}. In particular, we assume familiarity with the basic theory of Borel sets and Polish spaces, and use the same notation as in \cite[Section 11.B]{kechris}. For example, given a space $Z$, we use $\mathbf{\Sigma}^0_1(Z)$, $\mathbf{\Pi}^0_1(Z)$, and $\mathbf{\Delta}^0_1(Z)$ to denote the collection of all open, closed, and clopen subsets of $Z$ respectively. Our reference for other set-theoretic notions is \cite{jech}.

The classes defined below constitute the so-called \emph{difference hierarchy} (or \emph{small Borel sets}). For a detailed treatment, see \cite[Section 22.E]{kechris} or \cite[Chapter 3]{vanengelent}. Here, we will only mention that the $\mathsf{D}_\eta(\mathbf{\Sigma}^0_\xi(Z))$ are among the simplest concrete examples of Wadge classes (see Proposition \ref{expansiondifferences} and Corollary \ref{expansionhausdorffnonselfdual}).

\begin{definition}[Kuratowski]
Let $Z$ be a space, let $1\leq\eta<\omega_1$ and $1\leq\xi<\omega_1$. Given a sequence of sets $(A_\mu:\mu<\eta)$, define
$$
\left.
\begin{array}{lcl}
& & \mathsf{D}_\eta(A_\mu:\mu<\eta)= \left\{
\begin{array}{ll}
\bigcup\{A_\mu\setminus\bigcup_{\zeta<\mu}A_\zeta:\mu<\eta\text{ and }\mu\text{ is odd}\} & \text{if }\eta\text{ is even,}\\
\bigcup\{A_\mu\setminus\bigcup_{\zeta<\mu}A_\zeta:\mu<\eta\text{ and }\mu\text{ is even}\} & \text{if }\eta\text{ is odd.}
\end{array}
\right.
\end{array}
\right.
$$
Define $A\in\mathsf{D}_\eta(\mathbf{\Sigma}^0_\xi(Z))$ if there exist $A_\mu\in\mathbf{\Sigma}^0_\xi(Z)$ for $\mu<\eta$ such that $A=\mathsf{D}_\eta(A_\mu:\mu<\eta)$.\footnote{\,Notice that the definition of $\mathsf{D}_\eta(\mathbf{\Sigma}^0_\xi(Z))$ does not change if one adds the requirement that $(A_\mu:\mu<\eta)$ is $\subseteq$-increasing.}
\end{definition}

For an introduction to the topic of games, we refer the reader to \cite[Section 20]{kechris}. Here, we only want to give the precise definition of determinacy. A \emph{play} of the game $G(\omega,X)$ is decribed by the diagram
\begin{center}
\begin{tabular}{cccccl}
I & $a_0$ & & $a_2$ & & $\cdots$\\
II & & $a_1$ & & $a_3$ & $\cdots$,
\end{tabular}
\end{center}
in which $a_n\in\omega$ for every $n\in\omega$ and $X\subseteq \omega^\omega$ is called the \emph{payoff set}. We will say that Player I \emph{wins} this play of the game $G(\omega,X)$ if $(a_0,a_1,\ldots)\in X$. Player II \emph{wins} if Player I does not win.

A \emph{strategy} for a player is a function $\sigma:\omega^{<\omega}\longrightarrow\omega$. We will say that $\sigma$ is a \emph{winning strategy} for Player I if setting $a_{2n}=\sigma(a_1,a_3,\ldots,a_{2n-1})$ for each $n$ makes Player I win for every $(a_1,a_3,\ldots)\in\omega^\omega$. A winning strategy for Player II is defined similarly. We will say that the game $G(\omega,X)$ (or simply the set $X$) is \emph{determined} if (exactly) one of the players has a winning strategy. The \emph{Axiom of Determinacy} (briefly, $\AD$) states that every $X\subseteq\omega^\omega$ is determined.\footnote{\,Quite amusingly, Van Wesep referred to $\AD$ as a ``frankly heretical postulate'' (see \cite[page 64]{vanwesept}), and Steel deemed it ``probably false'' (see \cite[page 63]{steelt}).} We will denote by $\BP$ the axiom stating that for every Polish space $Z$, every subset of $Z$ has the Baire property. Using the arguments in \cite[Section 21.C]{kechris}, it can be shown that $\AD$ implies $\BP$.

It is well-known that $\AD$ is incompatible with the Axiom of Choice (see \cite[Lemma 33.1]{jech}). This is the reason why, throughout this article, we will be working in $\ZF+\DC$.\footnote{\,The consistency of $\ZF+\DC+\AD$ can be obtained under suitable large cardinal assumptions (see \cite[Proposition 11.13]{kanamori} and \cite{neeman}).} Recall that the \emph{principle of Dependent Choices} (briefly, $\DC$) states that if $R$ is a binary relation on a non-empty set $A$ such that for every $a\in A$ there exists $b\in A$ such that $(b,a)\in R$, then there exists a sequence $(a_0, a_1,\ldots)\in A^\omega$ such that $(a_{n+1},a_n)\in R$ for every $n\in\omega$. An equivalent formulation of $\DC$ is that a relation $R$ on a set $A$ is well-founded iff there exists no sequence $(a_0, a_1,\ldots)\in A^\omega$ such that $(a_{n+1},a_n)\in R$ for every $n\in\omega$ (see \cite[Lemma 5.5.ii]{jech}). Furthermore, $\DC$ implies the Countable Axiom of Choice (see \cite[Exercise 5.7]{jech}). To the reader who is unsettled by the lack of the full Axiom of Choice, we recommend \cite{howardrubin}.

We conclude this section with some miscellaneous topological definitions and results. We will write $X\approx Y$ to mean that the spaces $X$ and $Y$ are homeomorphic. A subset of a space is \emph{clopen} if it is closed and open. A space is \emph{zero-dimensional} if it is non-empty and it has a base consisting of clopen sets.\footnote{\,The empty space has dimension $-1$.} Given a function $s:F\longrightarrow 2$, where $F\subseteq\omega$ is finite, we will use the notation $[s]=\{z\in 2^\omega:s\subseteq z\}$. A space is \emph{crowded} if it is non-empty and it has no isolated points. A space $X$ is \emph{Baire} if every non-empty open subset of $X$ is non-meager in $X$. A space $X$ is \emph{meager} if $X$ is a meager subset of $X$. Proposition \ref{meagerorbaire} is a particular case of \cite[Lemma 3.1]{fitzpatrickzhou} (see also \cite[1.12.1]{vanengelent}). Proposition \ref{locallycompact} is the reason why we refer to locally compact spaces as the ``trivial exceptions''. Theorem \ref{pibase} is a special case of \cite[Theorem 2.4]{terada} (see also \cite[Theorem 2 and Appendix A]{medinia} or \cite[Theorem 3.2 and Appendix B]{medinit}).

\begin{proposition}[Fitzpatrick, Zhou]\label{meagerorbaire}
Let $X$ be a homogeneous space. Then $X$ is either a meager space or a Baire space.
\end{proposition}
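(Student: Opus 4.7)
The plan is to use the standard dichotomy argument that works for any homogeneous space by locating the ``meager part'' as a canonical open set and then applying homogeneity to show it is either empty or everything. Let $U$ denote the union of all open subsets of $X$ that are meager in $X$. The first step is to show that $U$ itself is meager: since $X$ is separable metrizable and hence Lindel\"of (hereditarily), the open cover of $U$ by meager open sets admits a countable subcover, so $U$ is a countable union of meager sets and therefore meager.

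The second step is to use homogeneity to show that $U$ is either empty or equal to $X$. Suppose $U \neq \varnothing$ and fix some $y \in U$, together with an open meager $V \subseteq X$ containing $y$. For an arbitrary $x \in X$, pick a homeomorphism $h:X \longrightarrow X$ with $h(y)=x$; then $h[V]$ is an open meager neighborhood of $x$, so $x \in U$. Hence $U = X$.

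Finally, one interprets the two cases. If $U = X$, then $X$ is meager in itself. If $U = \varnothing$, then no non-empty open subset of $X$ is meager in $X$, which is precisely the definition of $X$ being a Baire space. This exhausts both alternatives and proves the proposition.

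There is no real obstacle here; the only point that requires a tiny bit of care is that a priori ``meager'' is closed only under \emph{countable} unions, so one must invoke the Lindel\"of property of separable metrizable spaces (which is available in $\ZF+\DC$ without difficulty) to guarantee that the union $U$, taken over a possibly uncountable family, remains meager.
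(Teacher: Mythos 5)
Your proof is correct. The paper itself does not prove this proposition; it only cites it as a special case of a lemma of Fitzpatrick and Zhou (and of van Engelen's thesis), so there is no in-paper argument to compare against. Your argument is the standard one: localize the meager part as $U=\bigcup\{V\in\mathbf{\Sigma}^0_1(X):V\text{ is meager in }X\}$, use second countability (or hereditary Lindel\"ofness) to see that $U$ is meager, and then use homogeneity to push a single open meager neighborhood around, forcing $U=\varnothing$ or $U=X$; the case analysis matches exactly the paper's definitions of ``meager space'' and ``Baire space''. Your remark about choice is also handled correctly: both the extraction of a countable subcover and the fact that countable unions of meager sets are meager need only countable choice, which follows from $\DC$, so the argument is legitimate in the paper's base theory $\ZF+\DC$.
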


\begin{proposition}\label{locallycompact}
Let $X$ be a zero-dimensional locally compact space. Then $X$ is homogeneous iff $X$ is discrete, $X\approx 2^\omega$, or $X\approx\omega\times 2^\omega$.
\end{proposition}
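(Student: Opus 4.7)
The plan is to prove both directions. The ($\Leftarrow$) implication is straightforward: discrete spaces are trivially homogeneous, $2^\omega$ is homogeneous because it is a topological group (under coordinatewise addition mod $2$), and $\omega\times 2^\omega$ is homogeneous as a product of two homogeneous spaces.

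For the ($\Rightarrow$) direction, assume $X$ is zero-dimensional, locally compact, and homogeneous. If $X$ has an isolated point then by homogeneity every point is isolated and $X$ is discrete, so I may assume $X$ is crowded. The first observation I would establish is that in a zero-dimensional locally compact space the compact clopen sets form a base: given $x\in X$ and an open $U\ni x$, pick a compact neighborhood $K$ of $x$ with $K\subseteq U$ and a clopen basic neighborhood $V$ of $x$ contained in the interior of $K$; then $V$ is closed in $X$ and contained in $K$, so $V$ is compact. Since $X$ has a countable base, extract a countable cover of $X$ by compact clopen sets and disjointify to obtain a partition $X=\bigsqcup_{n\in\omega}L_n$ in which each $L_n$ is compact clopen (possibly empty).

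Each non-empty $L_n$ is open in the crowded space $X$, hence itself crowded, and is also compact, zero-dimensional and metrizable; by Brouwer's characterization of the Cantor set, $L_n\approx 2^\omega$. To finish, split on compactness of $X$: if $X$ is compact then only finitely many $L_n$ are non-empty, and a finite disjoint union of copies of $2^\omega$ is homeomorphic to $2^\omega$; if $X$ is not compact then infinitely many $L_n$ must be non-empty (otherwise $X$ would be a finite union of compact sets, hence compact), yielding $X\approx\omega\times 2^\omega$. The only mild subtlety is verifying the compact-clopen base in a zero-dimensional locally compact space; everything else is classical.
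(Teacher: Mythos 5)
Your proof is correct and follows essentially the same route as the paper, which also reduces the crowded case to Brouwer's characterization of $2^\omega$ as the unique zero-dimensional crowded compact metrizable space (\cite[Theorem 7.4]{kechris}); you have simply written out the standard decomposition into compact clopen pieces that the paper leaves to the reader. Nothing further is needed.
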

\begin{proof}
The right-to-left implication is trivial. For the left-to-right implication, use the well-known characterization of $2^\omega$ as the unique zero-dimensional crowded compact space (see \cite[Theorem 7.4]{kechris}).
\end{proof}

\begin{proposition}\label{somewherepolish}
Let $X$ be a zero-dimensional homogeneous space. If there exists a non-empty Polish $U\in\mathbf{\Sigma}^0_1(X)$ then $X$ is Polish.
\end{proposition}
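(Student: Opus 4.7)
The plan is to use zero-dimensionality to shrink $U$ to a clopen subset of $X$ that is Polish, then use homogeneity to spread it over all of $X$, and finally assemble $X$ as a countable topological disjoint union of Polish pieces.

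First, I would pick any $x\in U$. Since $X$ is zero-dimensional, there exists a clopen neighborhood $V$ of $x$ in $X$ with $V\subseteq U$. Being closed in $X$, the set $V$ is also closed in $U$; and $U$ is Polish by assumption. Hence $V$, viewed as a closed subspace of the Polish space $U$, is itself Polish. Note that $V$ is simultaneously clopen in $X$.

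Next, I would use homogeneity to show that every point of $X$ lies in a clopen Polish neighborhood. Given $y\in X$, pick a homeomorphism $h:X\longrightarrow X$ with $h(x)=y$. Then $h[V]$ is clopen in $X$, contains $y$, and is homeomorphic to $V$, hence Polish. Since $X$ is separable metrizable, it is Lindel\"of, so the open cover $\{h_y[V]:y\in X\}$ admits a countable subcover, giving clopen Polish sets $V_0,V_1,\ldots$ with $X=\bigcup_{n\in\omega}V_n$.

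Finally, I would disjointify: set $W_n=V_n\setminus\bigcup_{k<n}V_k$. Each $W_n$ is clopen in $X$ (as a Boolean combination of clopen sets) and open in the Polish space $V_n$, so $W_n$ is Polish. The sets $W_n$ are pairwise disjoint and clopen, so the topology of $X$ coincides with that of the topological disjoint union $\bigsqcup_{n\in\omega}W_n$. A countable topological disjoint sum of Polish spaces is Polish (one can metrize each $W_n$ by a complete metric bounded by $1$, then sum these metrics with the usual convention of distance $1$ between distinct summands), so $X$ is Polish. No serious obstacle is expected; the only point requiring minor care is ensuring the disjointified pieces remain Polish, which follows because open subspaces of Polish spaces are Polish.
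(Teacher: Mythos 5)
Your proof is correct and follows essentially the same route as the paper: shrink $U$ to a clopen Polish set, use homogeneity (plus Lindel\"of) to obtain a countable clopen Polish cover, disjointify, and conclude that the resulting countable clopen partition into Polish pieces makes $X$ Polish. The only difference is cosmetic: the paper cites \cite[Proposition 3.3.iii]{kechris} for the last step, whereas you verify the disjoint-sum metrization directly.
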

\begin{proof}
Let $U\in\mathbf{\Sigma}^0_1(X)$ be non-empty and Polish. Since $X$ is zero-dimensional, we can assume without loss of generality that $U\in\mathbf{\Delta}^0_1(X)$. Let $\UU=\{h[U]:h\text{ is a homeomorphism of }X\}$. Notice that $\UU$ is a cover of $X$ because $X$ is homogeneous and $U$ is non-empty. Let $\{U_n:n\in\omega\}$ be a countable subcover of $\UU$. Define $V_n=U_n\setminus\bigcup_{k<n}U_k$ for $n\in\omega$, and observe that each $V_n$ is Polish. Since $V_n\cap V_m=\varnothing$ whenever $m\neq n$, it follows from \cite[Proposition 3.3.iii]{kechris} that $X=\bigcup_{n\in\omega}V_n$ is Polish.
\end{proof}

\begin{proposition}\label{bairecomeager}
Assume $\BP$. Let $Z$ be a Polish space, and let $X$ be a dense Baire subspace of $Z$. Then $X$ is comeager in $Z$.
\end{proposition}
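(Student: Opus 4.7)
The plan is to invoke $\BP$ to obtain a decomposition $X=U\triangle M$ with $U\in\mathbf{\Sigma}^0_1(Z)$ and $M$ meager in $Z$, then to establish that $U$ is dense in $Z$. Once that is done, $Z\setminus U$ is closed with empty interior, hence nowhere dense and a fortiori meager in $Z$; and from $X\supseteq U\setminus M$ we get $Z\setminus X\subseteq(Z\setminus U)\cup M$, which is a union of two meager sets. So the whole proof reduces to showing that $U$ is dense.

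The density of $U$ is the crux, and it is where both hypotheses on $X$ (density in $Z$ and being a Baire space) enter. Suppose toward a contradiction that some non-empty $V\in\mathbf{\Sigma}^0_1(Z)$ satisfies $V\cap U=\varnothing$. Then $V\cap X=V\cap\bigl((U\setminus M)\cup(M\setminus U)\bigr)\subseteq M$, so $V\cap X$ is meager in $Z$. Since $X$ is dense in $Z$ and $V$ is non-empty open, $V\cap X$ is a non-empty open subset of $X$. We want this to contradict the Baire property of $X$, so the remaining ingredient is the transfer principle that any $A\subseteq X$ which is meager in $Z$ is meager in $X$ whenever $X$ is dense in $Z$.

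This transfer is the one substantive sublemma. Writing $A\subseteq\bigcup_{n\in\omega}B_n$ with each $B_n$ closed and nowhere dense in $Z$, each $B_n\cap X$ is closed in $X$; and any basic open subset of $X$ of the form $W\cap X$ (with $W\in\mathbf{\Sigma}^0_1(Z)$ non-empty) that is contained in $B_n$ would, by density of $X$ in $Z$ together with closedness of $B_n$, yield $W\subseteq B_n$, contradicting the nowhere-denseness of $B_n$ in $Z$. Hence each $B_n\cap X$ is nowhere dense in $X$, and $A=\bigcup_{n\in\omega}(A\cap B_n)$ is meager in $X$. Applying this to $A=V\cap X$ and combining with the Baire property of $X$ (which forbids a non-empty open subset of $X$ from being meager in $X$) delivers the contradiction, and the proof is complete.

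The only potential obstacle is the transfer sublemma in the last paragraph; it is standard but needs density of $X$ in $Z$ in an essential way (otherwise the $\QQQ\subseteq\RRR$ phenomenon shows the conclusion can fail without the Baire hypothesis). Everything else is bookkeeping with the symmetric difference $X=U\triangle M$.
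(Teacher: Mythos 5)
Your proof is correct and follows essentially the same route as the paper: decompose $X$ via the Baire property into a definable part plus a meager part, show the definable part is dense in $Z$ by deriving a contradiction with the Baireness of $X$, using the transfer of meagerness from $Z$ to the dense subspace $X$. The only differences are cosmetic: you use the form $X=U\triangle M$ with $U$ open where the paper uses $X=G\cup M$ with $G\in\mathbf{\Pi}^0_2(Z)$ (so it concludes directly that the dense $G_\delta$ set $G\subseteq X$ is comeager), and you spell out the transfer sublemma that the paper dismisses as easy.
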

\begin{proof}
Since $X$ has the Baire property, we can write $X=G\cup M$ by \cite[Proposition 8.23.ii]{kechris}, where $G\in\mathbf{\Pi}^0_2(Z)$ and $M$ is meager in $Z$. It will be enough to show that $G$ is dense in $Z$. Assume, in order to get a contradiction, that there exists a non-empty open subset $U$ of $Z$ such that $U\cap G=\varnothing$. Observe that $U\cap X$ is a non-empty open subset of $X$ because $X$ is dense in $Z$. Furthermore, using the density of $X$ again, it is easy to see that $M=M\cap X$ is meager in $X$. Since $U\cap X\subseteq M$, this contradicts the fact that $X$ is a Baire space.
\end{proof}

\begin{theorem}[Terada]\label{pibase}
Let $X$ be a non-compact space. Assume that $X$ has a base $\BB\subseteq\mathbf{\Delta}^0_1(X)$ such that $U\approx X$ for every $U\in\BB$. Then $X$ is strongly homogeneous.
\end{theorem}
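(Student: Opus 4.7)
My plan is to show that any non-empty clopen $V\subseteq X$ satisfies $V\approx X$, by constructing the required homeomorphism via a back-and-forth on clopen partitions. First I would record two preliminary observations. Since $V$ is non-empty and clopen, it contains some $U\in\BB$; as $U\approx X$ is non-compact and is clopen (hence closed) in $V$, it follows that $V$ itself is non-compact, and by the same argument every non-empty clopen subset of $V$ or of $X$ contains a clopen copy of the non-compact space $X$ and is therefore infinite. Moreover $\{U\in\BB:U\subseteq V\}$ is a clopen base of $V$ whose elements are all homeomorphic to $X$, so $V$ inherits the hypothesis of the theorem, and $V$ and $X$ play symmetric roles in what follows.

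The key technical tool would be the following splitting lemma: fix any compatible metric $d$ on $X$; then given any non-empty clopen $W$ in $V$ or in $X$, any pair of distinct points $w,w'\in W$, and any $\epsilon>0$, one can write $W=W_0\sqcup W_1$ with $W_0,W_1$ non-empty clopen, $W_0\in\BB$ (hence $W_0\approx X$), $w\in W_0$, $w'\in W_1$, and $\diam(W_0)<\epsilon$. The proof is immediate from the hypothesis: pick $W_0\in\BB$ with $w\in W_0\subseteq W$ of diameter less than $\min(\epsilon,d(w,w'))$; then $w'\notin W_0$, so $W_1=W\setminus W_0$ is a non-empty clopen.

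With this in hand the back-and-forth is routine. Fix enumerations $\{v_n:n\in\omega\}$ and $\{x_n:n\in\omega\}$ of countable dense subsets of $V$ and of $X$. I would recursively build finite clopen partitions $\{V_s:s\in F_n\}$ of $V$ and $\{X_s:s\in F_n\}$ of $X$, indexed by a common finite set $F_n$ and each refining its predecessor, so that every piece is non-empty and, through a fair scheduling of tasks between the two sides, by stage $n$ each of $v_0,\ldots,v_n$ and $x_0,\ldots,x_n$ lies in a piece of diameter less than $1/n$. At each refinement step I would split one chosen piece on one side via the splitting lemma (either to locate the next dense point or to shrink an oversized piece) and then split the paired piece on the other side in the same way, preserving the bijection between the two partitions. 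The map $h:V\to X$ assigning to $v$ the unique element of $\bigcap_n X_{s_n(v)}$, where $s_n(v)\in F_n$ indexes the piece containing $v$ at stage $n$, is then a homeomorphism by the usual Cantor scheme argument. The only real obstacle is keeping the splitting lemma applicable throughout, but this comes for free: every non-empty clopen piece, on either side, contains a clopen copy of the non-compact space $X$ and is therefore infinite, so two distinct points can always be selected as required.
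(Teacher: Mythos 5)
The gap is in the last sentence of your construction: ``$h$ is then a homeomorphism by the usual Cantor scheme argument.'' That argument needs, for every $v\in V$, that the matched pieces $X_{s_n(v)}$ have diameters tending to $0$ \emph{and} non-empty intersection (and the symmetric statement on the $V$-side, to get injectivity and surjectivity), and this is precisely what is unavailable here. Since $X$ is not compact, a finite clopen partition cannot have all its pieces of small diameter (consider an infinite closed discrete subset), so your scheduling can only control the pieces around the countably many designated points $v_i,x_i$; an arbitrary $v\in V$ can be dumped into the ``large remainder'' piece $W_1$ at every application of your splitting lemma, so along its branch neither the $V$-side nor the $X$-side diameters need shrink, which already ruins injectivity and continuity. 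Worse, even along branches whose diameters do shrink, a decreasing sequence of non-empty clopen sets with vanishing diameter can have empty intersection in a non-complete space (in $\mathbb{Q}$, clopen intervals shrinking to an irrational), so $\bigcap_n X_{s_n(v)}$ may simply be empty and $h$ is not well defined.

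A symptom that the approach cannot be repaired as stated: after your preliminary observations, the construction never uses that the members of $\BB$ are homeomorphic to $X$ --- only that they are clopen, can be taken small, and that every non-empty clopen set is infinite. If the back-and-forth worked from those facts alone, it would prove, for instance, that $\mathbb{Q}\approx 2^\omega\oplus\mathbb{Q}$ (take $X=2^\omega\oplus\mathbb{Q}$ and $V=\mathbb{Q}$: both are non-compact, crowded, and have clopen bases), which is false for cardinality reasons. The proof the paper points to (Terada; see also the appendices of \cite{medinia} and \cite{medinit}) is instead an absorption argument, and that is where both hypotheses genuinely enter: by Lindel\"ofness and non-compactness, $X$ admits a partition into infinitely many non-empty clopen sets $E_n$; each $E_n$ contains a basic copy of $X$, hence a clopen copy of $V$, so $E_n\approx V\oplus C_n$ and regrouping the countable sum gives $V\oplus X\approx X$; finally, picking $U\in\BB$ with $U\subseteq V$ gives $V\approx X\oplus(V\setminus U)$, whence $V\approx X$. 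Your splitting lemma and the observation that $V$ inherits the hypotheses are correct, but they feed a Cantor-scheme construction that only works in the compact (or at least complete, vanishing-mesh) setting.
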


\section{The basics of Wadge theory}

The following simple lemma will allow us to generalize many Wadge-theoretic results from $\omega^\omega$ to an arbitrary zero-dimensional Polish space. This approach has already appeared in \cite[Section 5]{andretta}, where it is credited to Marcone. Recall that, given a space $Z$ and $W\subseteq Z$, a \emph{retraction} is a continuous function $\rho:Z\longrightarrow W$ such that $\rho\re W=\id_W$. By \cite[Theorem 7.8]{kechris}, every zero-dimensional Polish space is homeomorphic to a closed subspace $Z$ of $\omega^\omega$, and by \cite[Proposition 2.8]{kechris} there exists a retraction $\rho:\omega^\omega\longrightarrow Z$.

\begin{lemma}\label{bairetoall}
Let $Z\subseteq\omega^\omega$, and let $\rho:\omega^\omega\longrightarrow Z$ be a retraction. Fix $A,B\subseteq Z$. Then $A\leq B$ in $Z$ iff $\rho^{-1}[A]\leq\rho^{-1}[B]$ in $\omega^\omega$. 
\end{lemma}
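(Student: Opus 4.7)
The plan is to verify both implications directly by composing the given reduction with the retraction $\rho$, exploiting the defining property $\rho(z)=z$ for every $z\in Z$. This is essentially a bookkeeping exercise rather than an argument with a genuine obstacle, so I expect the proof to be short and symmetric.

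For the forward direction, suppose $f:Z\longrightarrow Z$ is continuous and witnesses $A=f^{-1}[B]$. I would define $g:\omega^\omega\longrightarrow\omega^\omega$ by $g=f\circ\rho$; this is continuous because $\rho$ and $f$ are, and its range is contained in $Z$. To check that $g$ witnesses $\rho^{-1}[A]\leq\rho^{-1}[B]$, fix $x\in\omega^\omega$: since $g(x)\in Z$, we have $\rho(g(x))=g(x)=f(\rho(x))$, so $x\in g^{-1}[\rho^{-1}[B]]$ iff $f(\rho(x))\in B$ iff $\rho(x)\in A$ iff $x\in\rho^{-1}[A]$.

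For the backward direction, suppose $g:\omega^\omega\longrightarrow\omega^\omega$ is continuous and witnesses $\rho^{-1}[A]=g^{-1}[\rho^{-1}[B]]$. I would define $f:Z\longrightarrow Z$ by $f=\rho\circ g\re Z$; again continuity is immediate. For $z\in Z$, using $\rho(z)=z$, one computes $z\in f^{-1}[B]$ iff $\rho(g(z))\in B$ iff $g(z)\in\rho^{-1}[B]$ iff $z\in g^{-1}[\rho^{-1}[B]]=\rho^{-1}[A]$ iff $z\in A$, which is exactly $A=f^{-1}[B]$.

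The only subtlety worth flagging is purely set-theoretic: one must remember that $A,B\subseteq Z$, so the preimages $\rho^{-1}[A]$ and $\rho^{-1}[B]$ make sense as subsets of $\omega^\omega$, and the equality $\rho\circ g(z)\in B\iff g(z)\in\rho^{-1}[B]$ uses nothing more than the definition of $\rho^{-1}[B]$. No use of determinacy, $\DC$, or Wadge's lemma is needed; this is a general fact about continuous reductions in the presence of a retraction, which is exactly why the lemma can serve as a transfer tool between $\omega^\omega$ and an arbitrary zero-dimensional Polish space.
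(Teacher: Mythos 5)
Your proof is correct and uses exactly the same witnesses as the paper: $f\circ\rho$ for the forward direction and $\rho\circ(f\re Z)$ for the backward direction, with the routine verifications spelled out. Nothing to add.
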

\begin{proof}
If $f:Z\longrightarrow Z$ witnesses that $A\leq B$ in $Z$, then $f\circ\rho:\omega^\omega\longrightarrow\omega^\omega$ will witness that $\rho^{-1}[A]\leq\rho^{-1}[B]$ in $\omega^\omega$. On the other hand, if $f:\omega^\omega\longrightarrow\omega^\omega$ witnesses that $\rho^{-1}[A]\leq\rho^{-1}[B]$ in $\omega^\omega$, then $\rho\circ(f\re Z):Z\longrightarrow Z$ will witness that $A\leq B$ in $Z$.
\end{proof}

The following result (commonly known as ``Wadge's Lemma'') shows that antichains with respect to $\leq$ have size at most $2$.
\begin{lemma}[Wadge]\label{wadgelemma}
Assume $\AD$. Let $Z$ be a zero-dimensional Polish space, and let $A,B\subseteq Z$. Then either $A\leq B$ or $Z\setminus B\leq A$.
\end{lemma}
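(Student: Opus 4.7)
The plan is to reduce to the classical case $Z=\omega^\omega$ via Lemma \ref{bairetoall}, and then run the standard Wadge game argument. By \cite[Theorem 7.8]{kechris} we may assume that $Z$ is a closed subspace of $\omega^\omega$, and by \cite[Proposition 2.8]{kechris} we may fix a retraction $\rho:\omega^\omega\longrightarrow Z$. Set $A'=\rho^{-1}[A]$ and $B'=\rho^{-1}[B]$, and observe that $\omega^\omega\setminus B'=\rho^{-1}[Z\setminus B]$. Applying Lemma \ref{bairetoall} twice, the desired dichotomy ``$A\leq B$ or $Z\setminus B\leq A$ in $Z$'' is equivalent to ``$A'\leq B'$ or $\omega^\omega\setminus B'\leq A'$ in $\omega^\omega$''. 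So from now on we may assume $Z=\omega^\omega$, and simply write $A,B$ for $A',B'$.

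Next, I would introduce the Wadge game $G_W(A,B)$ on $\omega^\omega$. Players I and II alternate: at round $i$, Player I plays $a_i\in\omega$ and Player II plays $b_i\in\omega\cup\{\mathsf{pass}\}$. Let $x=(a_0,a_1,\ldots)\in\omega^\omega$ be Player I's output, and let $y$ be the subsequence of Player II's non-pass moves. Player II wins if $y$ is infinite and $x\in A\Longleftrightarrow y\in B$; otherwise Player I wins. Fixing a bijection $\omega\cup\{\mathsf{pass}\}\longleftrightarrow\omega$, this game is coded as $G(\omega,X)$ for a suitable payoff $X\subseteq\omega^\omega$, and hence is determined by $\AD$.

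From winning strategies I would extract the required reductions. Suppose $\tau$ is a winning strategy for Player II. For each $x\in\omega^\omega$, the play in which Player I moves according to $x$ and Player II follows $\tau$ produces, by the winning condition, an infinite sequence $y=f(x)\in\omega^\omega$ satisfying $x\in A\Longleftrightarrow f(x)\in B$. Since any initial segment of $f(x)$ is determined by a finite initial segment of $x$ (via $\tau$), the map $f$ is continuous, and $A=f^{-1}[B]$ gives $A\leq B$. Suppose instead that $\sigma$ is a winning strategy for Player I. For each $y\in\omega^\omega$, let Player II play the entries of $y$ without ever passing, and let $g(y)\in\omega^\omega$ be the sequence of Player I's moves dictated by $\sigma$. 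Then $y$ is infinite, so the winning condition forces $g(y)\in A\Longleftrightarrow y\notin B$; thus $g$ is continuous and $\omega^\omega\setminus B=g^{-1}[A]$, giving $\omega^\omega\setminus B\leq A$.

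The main (mild) technical point is the well-definedness of $f$: a priori, a strategy for Player II could instruct her to pass forever against some play by Player I, in which case $f(x)$ would be a finite sequence. This is precisely why the winning condition for Player II insists that $y$ be infinite, so that a \emph{winning} strategy $\tau$ cannot exhibit this pathology. Once this subtlety is handled, the continuity of $f$ and $g$, and the equalities $f^{-1}[B]=A$ and $g^{-1}[A]=\omega^\omega\setminus B$, are immediate from the description of the game.
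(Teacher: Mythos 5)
Your proof is correct and follows essentially the same route as the paper: reduce to the case $Z=\omega^\omega$ via the retraction and Lemma \ref{bairetoall}, then invoke the classical Wadge game argument (which the paper simply cites from \cite[proof of Theorem 21.14]{kechris}, while you spell out the standard details, including the correct handling of passing). No gaps.
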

\begin{proof}
For the case $Z=\omega^\omega$, see \cite[proof of Theorem 21.14]{kechris}. To obtain the full result from this particular case, use Lemma \ref{bairetoall} and the remarks preceding it.
\end{proof}

\begin{theorem}[Martin, Monk]\label{wellfounded}
Assume $\AD$. Let $Z$ be a zero-dimensional Polish space. Then the relation $\leq$ on $\PP(Z)$ is well-founded.
\end{theorem}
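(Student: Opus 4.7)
The plan is to argue by contradiction, via the classical Martin--Monk determinacy trick. Suppose that $\leq$ on $\PP(Z)$ is not well-founded; then by $\DC$ there is a sequence $(A_n)_{n\in\omega}$ in $\PP(Z)$ with $A_{n+1}<A_n$ for every $n$. As a first step I would reduce to the canonical case $Z=\omega^\omega$: using the closed embedding of $Z$ into $\omega^\omega$ and the retraction $\rho:\omega^\omega\longrightarrow Z$ furnished by the remarks preceding Lemma \ref{bairetoall}, that lemma shows that $(\rho^{-1}[A_n])_{n\in\omega}$ is still strictly $\leq$-descending in $\PP(\omega^\omega)$, so from now on I assume $Z=\omega^\omega$.

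Next I would extract continuous ``flip'' reductions from Wadge's Lemma. For each $n$, the strict inequality $A_{n+1}<A_n$ gives $A_n\not\leq A_{n+1}$, so Lemma \ref{wadgelemma} produces a continuous $f_n:\omega^\omega\longrightarrow\omega^\omega$ witnessing $\omega^\omega\setminus A_{n+1}\leq A_n$, i.e.,
\[
y\in A_{n+1} \iff f_n(y)\notin A_n.
\]
Iterating, the composition $F_k:=f_0\circ f_1\circ\cdots\circ f_{k-1}$ is continuous and satisfies $F_k(y)\in A_0\iff y\in A_k$ for even $k$, and $F_k(y)\in A_0\iff y\notin A_k$ for odd $k$; so every $A_k$ is visible as a continuous preimage of $A_0$ with an alternating parity sign.

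The heart of the proof is then Monk's determinacy argument. One considers a two-player game on $\omega$ whose moves are decoded, via a fixed pairing, into infinitely many parallel sub-plays that are fed through the $f_n$'s; the payoff is arranged so that the two players' roles are ``dual'' under the flips $y\mapsto f_n(y)$. By $\AD$ the game is determined, and either player's winning strategy, composed with the flip identities above, produces a continuous reduction that contradicts the strictness of the chain --- the unfolding yields, at some finite stage, a witness to $A_n\leq A_{n+1}$. The main obstacle is the precise design of this payoff so that both cases ``Player~I wins'' and ``Player~II wins'' produce such a collision; this is Martin's and Monk's classical trick, worked out for $Z=\omega^\omega$ in \cite[Section 21.B]{kechris}, and the reduction above then transfers the result to an arbitrary zero-dimensional Polish $Z$.
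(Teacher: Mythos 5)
Your proposal follows the paper's proof: reduce to $Z=\omega^\omega$ via the closed embedding, the retraction, and Lemma \ref{bairetoall}, then invoke the classical Martin--Monk argument for the Baire space, which the paper likewise simply cites as \cite[proof of Theorem 21.15]{kechris} (note it is Section 21.E, not 21.B). One small caveat: the mechanism you sketch for the final contradiction (a single auxiliary determined game whose winning strategy yields a reduction $A_n\leq A_{n+1}$) is not how the cited argument actually runs --- it fixes winning strategies for Player I in the individual Wadge games, composes infinitely many of them to build a ``flip set'', and derives the contradiction from the Baire property --- but since you, like the paper, defer to Kechris for this core, the approach is essentially the same.
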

\begin{proof}
For the case $Z=\omega^\omega$, see \cite[proof of Theorem 21.15]{kechris}. To obtain the full result from this particular case, use Lemma \ref{bairetoall} and the remarks preceding it.
\end{proof}

Given a zero-dimensional Polish space $Z$, define
$$
\W(Z)=\{\{\bG,\bGc\}:\bG\text{ is a Wadge class in }Z\}.
$$
Given $p,q\in\W(Z)$, define $p\prec q$ if $\bG\subseteq\bL$ for every $\bG\in p$ and $\bL\in q$. Using the two previous results, one sees that the ordering $\prec$ on $\W(Z)$ is a well-order. Therefore, there exists an order-isomorphism $\phi:\W(Z)\longrightarrow\Theta$ for some ordinal $\Theta$.\footnote{\,For a characterization of $\Theta$, see \cite[Definition 0.1 and Lemma 0.2]{solovay}.} The reason for the ``1+'' in the definition below is simply a matter of technical convenience (see \cite[page 45]{andrettahjorthneeman}).
\begin{definition}\label{wadgerank}
Let $Z$ be a zero-dimensional Polish space, and let $\bG$ be a Wadge class in $Z$. Define
$$
||\bG||=1+\phi(\{\bG,\bGc\}).
$$
We will say that $||\bG||$ is the \emph{Wadge-rank} of $\bG$.
\end{definition}

It is easy to check that $\{\{\varnothing\},\{Z\}\}$ is the minimal element of $\W(Z)$. Furthermore, elements of the form $\{\bG,\bGc\}$ for $\bG\in\NSD(Z)$ are always followed by $\{\bD\}$ for some $\bD\in\SD(Z)$, while elements of the form $\{\bD\}$ for $\bD\in\SD(Z)$ are always followed by $\{\bG,\bGc\}$ for some $\bG\in\NSD(Z)$. This was proved by Van Wesep for $Z=\omega^\omega$ (see \cite[Corollary to Theorem 2.1]{vanwesept}), and it can be generalized to arbitrary uncountable zero-dimensional Polish spaces using Corollary \ref{selfdualcorollary} and the machinery of relativization that we will develop in Sections 6 to 8. Since these facts will not be needed in the remainder of the paper, we omit the proof.

In fact, as Proposition \ref{orderisomorphism} (together with Theorem \ref{hausdorffmain}) will show, the ordering of the non-selfdual classes is independent of the space $Z$. However, the situation is more delicate for selfdual classes. For example, it follows easily from Corollary \ref{selfdualcorollary} that if $\bG$ is a Wadge class in $2^\omega$ such that $||\bG||$ is a limit ordinal of countable cofinality, then $\bG$ is non-selfdual. On the other hand, if $\bG$ is a Wadge class in $\omega^\omega$ such that $||\bG||$ is a limit ordinal of countable cofinality, then $\bG$ is selfdual (see \cite[Corollary to Theorem 2.1]{vanwesept} again).

The collection of all Wadge classes on a given space $Z$, ordered by $\subseteq$, is known as the \emph{Wadge hierarchy}. The following diagram shows how this hierarchy looks like when $Z$ is a zero-dimensional Polish space.\footnote{\,Notice that this hierachy can collapse rather soon when $Z$ is countable. For example, when $Z$ is the discrete space $\omega$, the Wadge hierarchy consists only of the three bottom classes.}
\begin{center}
$
\xymatrix{
&\,\,\,\,\,\vdots\,\,\,\,\,\ar@{-}[rd]\ar@{-}[ld]&\\
\mathbf{\Sigma}^0_1(Z)\ar@{-}[rd]& &\mathbf{\Pi}^0_1(Z)\ar@{-}[ld]\\
&\bD^0_1(Z)\ar@{-}[rd]\ar@{-}[ld]&\\
\{\varnothing\}& &\{Z\}
}
$
\end{center}
\bigskip

We conclude this section with an elementary result, which shows that clopen sets are ``neutral sets'' for Wadge-reduction (the simple proof is left to the reader). By this we mean that, apart from trivial exceptions, intersections or unions with these sets do not change the Wadge class. In Section 12, we will prove more sophisticated closure properties.

\begin{proposition}\label{closureclopen}
Let $Z$ be a space, let $\bG$ be a Wadge class in $Z$, and let $A\in\bG$.
\begin{itemize}
\item Assume that $\bG\neq\{Z\}$. Then $A\cap V\in\bG$ for every $V\in\mathbf{\Delta}^0_1(Z)$.
\item Assume that $\bG\neq\{\varnothing\}$. Then $A\cup V\in\bG$ for every $V\in\mathbf{\Delta}^0_1(Z)$.
\end{itemize}
\end{proposition}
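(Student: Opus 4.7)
Since $\bG$ is a Wadge class, I would begin by fixing $B\subseteq Z$ with $\bG=[B]$. The two bullets are symmetric, so I sketch the first in detail and indicate the dual modification for the second. The key idea is to start from a continuous reduction $g$ witnessing $A\leq B$ and modify $g$ on the clopen piece where the intersection (or union) needs to be performed, sending that piece to a single constant value chosen from $Z\setminus B$ (respectively from $B$). The hypothesis on $\bG$ is exactly what guarantees that such a constant exists.

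For the first bullet, I would note that $\bG\neq\{Z\}$ forces $B\neq Z$, since otherwise $[B]=\{Z\}$ (because $C\leq Z$ requires $C=f^{-1}[Z]=Z$). Pick any $z_0\in Z\setminus B$, and for $A\in\bG$ choose a continuous $g:Z\longrightarrow Z$ with $A=g^{-1}[B]$. Define
\[
f(z)=\begin{cases}g(z)&\text{if }z\in V,\\ z_0&\text{if }z\in Z\setminus V.\end{cases}
\]
Since $V$ and $Z\setminus V$ are both clopen, $f$ is continuous on $Z$. A direct computation gives $f^{-1}[B]=(V\cap A)\cup\varnothing=A\cap V$, witnessing $A\cap V\leq B$ and hence $A\cap V\in [B]=\bG$.

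For the second bullet, I would apply the dual construction: $\bG\neq\{\varnothing\}$ forces $B\neq\varnothing$ (otherwise $[B]=\{\varnothing\}$), so pick $z_0\in B$ and define $f$ to equal $g$ on $Z\setminus V$ and the constant $z_0$ on $V$. The same continuity argument applies and yields $f^{-1}[B]=((Z\setminus V)\cap A)\cup V=A\cup V$. I do not foresee any genuine obstacle; the only point that requires a moment of thought is recognizing that the hypotheses $\bG\neq\{Z\}$ and $\bG\neq\{\varnothing\}$ translate, via Definition \ref{wadgeclassdefinition}, into the concrete statement that a suitable constant $z_0$ is available outside or inside $B$, which is all that the piecewise-continuous gluing needs.
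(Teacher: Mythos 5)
Your proof is correct, and since the paper explicitly leaves this ``simple proof'' to the reader, your argument is exactly the intended one: glue the given reduction with a constant map on a clopen piece, using $\bG\neq\{Z\}$ (resp.\ $\bG\neq\{\varnothing\}$) to find a constant value outside (resp.\ inside) a set $B$ generating $\bG$. The edge cases $V=\varnothing$ and $V=Z$ are handled automatically by your construction, so nothing is missing.
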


\section{The analysis of selfdual sets}

In this section we will simply collect well-known results which show that every selfdual set can be built using non-selfdual sets of lower complexity (apply Corollary \ref{selfdualcorollary} with $V=Z$). We will refer to the proof of \cite[Theorem 5.3]{mottoros}, which in turn generalizes \cite[Theorem 16]{andrettamartin} (see also \cite[Lemma 7.3.4]{louveaub}).

\begin{theorem}\label{selfdualtheorem}
Assume $\BP$. Let $Z$ be a zero-dimensional Polish space, let $V\in\bD^0_1(Z)$, and let $A$ be a selfdual subset of $Z$. Assume that $A\notin\mathbf{\Delta}^0_1(Z)$. Then there exist pairwise disjoint $V_n\in\mathbf{\Delta}^0_1(V)$ for $n\in\omega$ such that $\bigcup_{n\in\omega}V_n=V$ and $A\cap V_n< A$ in $Z$ for each $n$.
\end{theorem}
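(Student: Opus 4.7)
The plan is in two parts: first, reduce the conclusion to a \emph{local} statement --- that at every point of $V$ there is a clopen witness $U$ with $A\cap U<A$ --- and second, establish the local statement by contradiction, which is where $\BP$ enters through a Banach--Mazur-style argument, along the lines of \cite[Theorem 5.3]{mottoros} and \cite[Theorem 16]{andrettamartin}.

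For the reduction, set
$$
W=\{x\in V:\exists\,U\in\bD^0_1(V)\text{ with }x\in U\text{ and }A\cap U<A\text{ in }Z\}.
$$
Then $W$ is open in $V$, since any witness $U$ for $x\in W$ is itself contained in $W$. Because $A$ is selfdual and $A\notin\bD^0_1(Z)$, $A$ equals neither $\varnothing$ nor $Z$, so $[A]\notin\{\{\varnothing\},\{Z\}\}$; Proposition \ref{closureclopen} then gives $A\cap U\leq A$ for every $U\in\bD^0_1(Z)$, and the condition ``$A\cap U<A$'' is just ``$A\not\leq A\cap U$''. Granted $W=V$, I would extract a countable subcover $(U_n)$ of $V$ by witnesses via second-countability, and disjointify by $V_n:=U_n\setminus\bigcup_{k<n}U_k\in\bD^0_1(V)$. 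Proposition \ref{closureclopen} applied inside $[A\cap U_n]$ yields $A\cap V_n\leq A\cap U_n$, so $A\leq A\cap V_n$ would force $A\leq A\cap U_n$, contradicting $A\cap U_n<A$; hence $A\cap V_n<A$, as required.

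It remains to show $W=V$. Suppose toward contradiction that $F:=V\setminus W\neq\varnothing$; then $F$ is closed in $Z$. By definition of $W$, for every clopen $U\subseteq V$ meeting $F$ we have $A\equiv A\cap U$, and combined with selfduality this yields $A\leq A\cap U$ \emph{and} $Z\setminus A\leq A\cap U$ for every such $U$. Following the strategy of Motto Ros and Andretta--Martin, I would set up a Banach--Mazur game on a clopen slice of $F$ whose plays produce points of $F$, and use $\BP$ applied to suitable auxiliary sets built from $A$ and the local reductions above to extract a winning quasi-strategy for Player II; the zero-dimensional clopen basis of $V$ would then allow one to refine this quasi-strategy into a \emph{continuous} map $g:Z\to F$ witnessing $A\leq A\cap g[Z]$. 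Running the dual argument in parallel would produce a point of $Z$ lying in both $A$ and $Z\setminus A$, the required contradiction.

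The main obstacle --- and the only non-formal step --- is the construction of $g$: one has to turn local Wadge-reductions existing on every clopen neighborhood of $F$ into a single global continuous reduction. Under full $\AD$ this would be essentially automatic from Wadge's Lemma (Lemma \ref{wadgelemma}), but under $\BP$ alone one must route the argument through Banach--Mazur determinacy and carefully track, along the $\bD^0_1(V)$-basis, how a comeager winning region translates into a continuous selector. This delicate bookkeeping is the technical core of the arguments of \cite{mottoros} and \cite{andrettamartin} that I would follow.
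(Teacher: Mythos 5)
Your proposal is correct and follows essentially the same route as the paper: the paper's entire proof consists of citing \cite[Theorem 5.3]{mottoros} (which generalizes \cite[Theorem 16]{andrettamartin}) with $Z$ in place of $\omega^\omega$, $D_0=V$, $\FF$ the continuous self-maps of $Z$, and $\Delta_\FF=\mathbf{\Delta}^0_1(Z)$, and your local-to-global reduction via the open set $W$ is just an explicit repackaging of the first half of that argument, with the complement of $W$ playing the role of the branches of the tree of clopen sets on which $A$ remains equivalent to itself. The Banach--Mazur/$\BP$ step you defer to \cite{mottoros} and \cite{andrettamartin} is precisely the part the paper itself leaves to those references, so your attempt proves no less than the paper's own proof does.
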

\begin{proof}
This is proved like \cite[Theorem 5.3]{mottoros}, with $Z$ instead of $\omega^\omega$ (which is denoted by $\RRR$ there) and $D_0=V$, where $\FF$ is the collection of all continuous $f:Z\longrightarrow Z$ and $\Delta_\FF=\mathbf{\Delta}^0_1(Z)$.
\end{proof}

\begin{corollary}\label{selfdualcorollary}
Assume $\AD$. Let $Z$ be a zero-dimensional Polish space, let $V\in\bD^0_1(Z)$, and let $A$ be a selfdual subset of $Z$. Then there exist pairwise disjoint $V_n\in\mathbf{\Delta}^0_1(V)$ and non-selfdual $A_n<A$ in $Z$ for $n\in\omega$ such that $\bigcup_{n\in\omega}V_n=V$ and $\bigcup_{n\in\omega}(A_n\cap V_n)=A\cap V$.
\end{corollary}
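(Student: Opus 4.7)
My plan is to prove the corollary by induction on the Wadge-rank $||[A]||$, treating Theorem \ref{selfdualtheorem} as the inductive step while invoking well-foundedness of Wadge-reduction (Theorem \ref{wellfounded}) to justify the induction. The central obstacle that Theorem \ref{selfdualtheorem} leaves open is that the pieces $A \cap V_n$ it produces, though strictly Wadge-below $A$, may themselves still be selfdual, so one cannot in general set $A_n = A\cap V_n$ directly; the induction is designed precisely to refine the selfdual pieces further.

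For the base case $A \in \bD^0_1(Z)$: since $\varnothing$ and $Z$ are non-selfdual in a nonempty zero-dimensional Polish space, $A$ must be a nontrivial clopen set, and in particular $Z < A$ and $\varnothing < A$ with both $Z$ and $\varnothing$ non-selfdual. I would then take $V_0 = V\cap A$, $V_1 = V\setminus A$, $A_0 = Z$, $A_1 = \varnothing$, and everything else empty; the required identities are then immediate.

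For the inductive step, assuming $A \notin \bD^0_1(Z)$, I would apply Theorem \ref{selfdualtheorem} to obtain pairwise disjoint $W_n \in \bD^0_1(V)$ covering $V$ with $A\cap W_n < A$ in $Z$. For each $n$ with $A\cap W_n$ non-selfdual, the pair $(W_n, A\cap W_n)$ is already as desired. For each $n$ with $A\cap W_n$ selfdual, the strict inequality $A\cap W_n < A$ gives $||[A\cap W_n]|| < ||[A]||$, so I may apply the inductive hypothesis to the selfdual set $A\cap W_n$ with $W_n$ in place of $V$, obtaining a refinement $W_n = \bigsqcup_k W_{n,k}$ with $W_{n,k} \in \bD^0_1(W_n) \subseteq \bD^0_1(V)$ together with non-selfdual $A_{n,k} < A\cap W_n$ satisfying $\bigcup_k(A_{n,k}\cap W_{n,k}) = A\cap W_n$. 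Reindexing the resulting countable collection of clopen pieces via a bijection with $\omega$ produces the required partition; strict Wadge-reducibility of each $A_n$ to $A$ follows by transitivity of $\leq$ together with $A\cap W_n < A$, and the identity $\bigcup_n(A_n \cap V_n) = A\cap V$ is automatic from $\bigcup_n W_n = V$.
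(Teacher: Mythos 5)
Your proposal is correct and takes essentially the same route as the paper: Theorem \ref{selfdualtheorem} provides the decomposition and the well-foundedness of $\leq$ (Theorem \ref{wellfounded}) guarantees termination. The paper organizes this as a proof by contradiction (a failure of the conclusion would let one recursively build an infinite strictly $\leq$-decreasing sequence) and leaves the clopen/non-selfdual bookkeeping as an easy check, while you run a direct induction on Wadge-rank and spell that bookkeeping out; the underlying argument is the same.
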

\begin{proof}
As one can easily check, it will be enough to show that there exist pairwise disjoint $V_n\in\mathbf{\Delta}^0_1(V)$ for $n\in\omega$ such that $\bigcup_{n\in\omega}V_n=V$ and for every $n\in\omega$ either $A\cap V_n\in\mathbf{\Delta}^0_1(Z)$ or $A\cap V_n$ is non-selfdual in $Z$. If this were not the case, then, using Theorem \ref{selfdualtheorem}, one could recursively construct a strictly $\leq$-decreasing sequence of subsets of $Z$, which would contradict Theorem \ref{wellfounded}.
\end{proof}

\section{Basic facts on Hausdorff operations}

For a history of the following important notion, see \cite[page 583]{hausdorff}. For a modern survey, we recommend \cite{zafrany}. Most of the proofs in this section are straightforward, hence we leave them to the reader.

\begin{definition}
Given a set $Z$ and $D\subseteq\PP(\omega)$, define
$$
\HH_D(A_0,A_1,\ldots)=\{x\in Z:\{n\in\omega:x\in A_n\}\in D\}
$$
whenever $A_0,A_1,\ldots\subseteq Z$. Functions of this form are called \emph{Hausdorff operations} (or \emph{$\omega$-ary Boolean operations}).
\end{definition}

Of course, the function $\HH_D$ depends on the set $Z$, but what $Z$ is will usually be clear from the context. In case there might be uncertainty about the ambient space, we will use the notation $\HH_D^Z$. Notice that, once $D$ is specified, the corresponding Hausdorff operation simultaneously defines functions $\PP(Z)^\omega\longrightarrow\PP(Z)$ for every $Z$.

The following proposition lists the most basic properties of Hausdorff operations. Given $n\in\omega$, define $S_n=\{A\subseteq\omega:n\in A\}$.

\begin{proposition}\label{hausdorffsettheoretic}
Let $I$ be a non-empty set, and let $D_i\subseteq\PP(\omega)$ for every $i\in I$. Fix an ambient set $Z$ and $A_0,A_1,\ldots\subseteq Z$.
\begin{itemize}
\item $\HH_{S_n}(A_0,A_1,\ldots)=A_n$ for all $n\in\omega$.
\item $\bigcap_{i\in I}\HH_{D_i}(A_0,A_1,\ldots)=\HH_D(A_0,A_1,\ldots)$, where $D=\bigcap_{i\in I}D_i$.
\item $\bigcup_{i\in I}\HH_{D_i}(A_0,A_1,\ldots)=\HH_D(A_0,A_1,\ldots)$, where $D=\bigcup_{i\in I}D_i$.
\item $Z\setminus\HH_D(A_0,A_1,\ldots)=\HH_{\PP(\omega)\setminus D}(A_0,A_1,\ldots)$ for all $D\subseteq\PP(\omega)$.
\end{itemize}
\end{proposition}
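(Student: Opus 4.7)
All four statements are straightforward unpackings of the definition of $\HH_D$, and the plan is simply to apply the definition to a generic $x\in Z$ and observe that the membership condition reduces to a set-theoretic statement about the ``trace'' set $T_x=\{n\in\omega:x\in A_n\}\subseteq\omega$.

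For the first bullet, the plan is to note that $x\in\HH_{S_n}(A_0,A_1,\ldots)$ iff $T_x\in S_n$ iff $n\in T_x$ iff $x\in A_n$, which directly gives the desired equality. For the second and third bullets, the key observation is that $x\in\HH_{D_i}(A_0,A_1,\ldots)$ iff $T_x\in D_i$, so quantifying over $i\in I$ yields $x\in\bigcap_{i\in I}\HH_{D_i}(A_0,A_1,\ldots)$ iff $T_x\in D_i$ for all $i\in I$ iff $T_x\in\bigcap_{i\in I}D_i$, and analogously with $\bigcup$ in place of $\bigcap$. For the fourth bullet, I would simply observe that $x\in Z\setminus\HH_D(A_0,A_1,\ldots)$ iff $T_x\notin D$ iff $T_x\in\PP(\omega)\setminus D$ iff $x\in\HH_{\PP(\omega)\setminus D}(A_0,A_1,\ldots)$.

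There is no real obstacle here; each statement is a two-line chain of iff's through the definition. The only thing to be careful about is that the ambient set $Z$ is the same on both sides in each identity (which it is by hypothesis), so that the notation $\HH_D$ refers to the same function $\PP(Z)^\omega\longrightarrow\PP(Z)$ throughout. Accordingly, I would expect the paper to leave these verifications to the reader, as indicated in the paragraph preceding the statement.
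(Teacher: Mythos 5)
Your proposal is correct and follows exactly the intended verification: unpacking the definition of $\HH_D$ via the trace set $\{n\in\omega:x\in A_n\}$ for each $x\in Z$. The paper gives no proof at all (it explicitly leaves these straightforward verifications to the reader), so your argument matches the paper's implicit approach.
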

The point of the above proposition is that any operation obtained by combining unions, intersections and complements can be expressed as a Hausdorff operation. For example, if $D=\bigcup_{n\in\omega}(S_{2n+1}\setminus S_{2n})$, then $\HH_D(A_0,A_1,\ldots)=\bigcup_{n\in\omega}(A_{2n+1}\setminus A_{2n})$.

The following proposition shows that the composition of Hausdorff operations is again a Hausdorff operation. We will assume that a bijection $\pi:\omega\times\omega\longrightarrow\omega$ has been fixed, and use the notation $\langle m,n\rangle=\pi(m,n)$.
\begin{proposition}\label{hausdorffcomposition}
Let $Z$ be a set, let $D\subseteq\PP(\omega)$ and $E_m\subseteq\PP(\omega)$ for $m\in\omega$. Then there exists $F\subseteq\PP(\omega)$ such that
$$
\HH_D(B_0,B_1,\ldots)=\HH_F(A_0,A_1,\ldots)
$$
for all $A_0,A_1,\ldots\subseteq Z$, where $B_m=\HH_{E_m}(A_{\langle m,0\rangle},A_{\langle m,1\rangle},\ldots)$.
\end{proposition}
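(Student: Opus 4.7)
The plan is to define $F$ explicitly and then verify the identity by a direct unpacking of the two Hausdorff operations. For any $T\subseteq\omega$ and $m\in\omega$, I would first introduce the auxiliary notation $T^{(m)}=\{n\in\omega:\langle m,n\rangle\in T\}$ for the $m$-th vertical section of $T$ under the fixed coding $\pi$. The point of this notation is that, if for a given $x\in Z$ we set $T_x=\{k\in\omega:x\in A_k\}$, then $\{n\in\omega:x\in A_{\langle m,n\rangle\}}\}$ is literally $T_x^{(m)}$, so by the definition of $B_m=\HH_{E_m}(A_{\langle m,0\rangle},A_{\langle m,1\rangle},\ldots)$ we have $x\in B_m$ iff $T_x^{(m)}\in E_m$.

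With this in hand, the natural choice is to let
$$F=\{T\subseteq\omega:\{m\in\omega:T^{(m)}\in E_m\}\in D\}.$$
Observe that $F$ is defined purely from $D$, the sequence $(E_m)_{m\in\omega}$, and the coding $\pi$, with no reference to $Z$; this is important because Hausdorff operations are supposed to act uniformly in the ambient set.

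To verify that this $F$ works, I would fix $x\in Z$ and chase the equivalences: $x\in\HH_D(B_0,B_1,\ldots)$ means $\{m\in\omega:x\in B_m\}\in D$, which by the first paragraph equals $\{m\in\omega:T_x^{(m)}\in E_m\}\in D$, which is exactly the statement $T_x\in F$. Since $T_x=\{k\in\omega:x\in A_k\}$, the latter is the definition of $x\in\HH_F(A_0,A_1,\ldots)$. This closes the chain of equivalences and proves the proposition.

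The argument is essentially a bookkeeping computation and I do not expect any real obstacle; the only point requiring a pinch of care is keeping the two levels of the Hausdorff operation notationally separate, which is what the section notation $T^{(m)}$ is designed to do. Nothing in the argument depends on topology or on any structure on $Z$, so the result holds at the level of arbitrary sets, as needed for the subsequent applications.
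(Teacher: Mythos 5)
Your proposal is correct and coincides with the paper's proof: the set $F$ you define (membership of $T$ in $F$ determined by whether $\{m\in\omega:\{n\in\omega:\langle m,n\rangle\in T\}\in E_m\}\in D$) is exactly the paper's choice, and the equivalence-chase you carry out is the ``straightforward verification'' the paper leaves to the reader.
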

\begin{proof}
Define $z\in F$ if $\{m\in\omega:\{n\in\omega:\langle m,n\rangle\in z\}\in E_m\}\in D$. The rest of the proof is a straightforward verification.
\end{proof}

We conclude this section with a result that will easily imply the fundamental Lemma \ref{relativization}.
\begin{proposition}\label{relativizationsettheoretic}
Let $Z$ and $W$ be sets, let $D\subseteq\PP(\omega)$, let $A_0,A_1,\ldots\subseteq Z$ and $B_0,B_1,\ldots\subseteq W$.
\begin{enumerate}
\item\label{subspacesettheoretic} $W\cap\HH_D^Z(A_0,A_1,\ldots)=\HH_D^W(A_0\cap W,A_1\cap W,\ldots)$ whenever $W\subseteq Z$.
\item\label{preimagesettheoretic} $f^{-1}[\HH_D(B_0,B_1,\ldots)]=\HH_D(f^{-1}[B_0],f^{-1}[B_1],\ldots)$ for all $f:Z\longrightarrow W$.
\item\label{homeomorphismsettheoretic} $f[\HH_D(A_0,A_1,\ldots)]=\HH_D(f[A_0],f[A_1],\ldots)$ for all bijections $f:Z\longrightarrow W$.
\end{enumerate}
\end{proposition}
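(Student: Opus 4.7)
The plan is to verify each of the three equalities by a direct membership chase, unfolding the definition of $\HH_D$ in each case. Since the only content of $\HH_D(A_0,A_1,\ldots)$ is the condition ``$\{n\in\omega:x\in A_n\}\in D$'', every part reduces to showing that membership in the index set matches on the two sides.

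For part \eqref{subspacesettheoretic}, I would fix $x$ and observe that $x$ belongs to the left-hand side iff $x\in W$ and $\{n\in\omega:x\in A_n\}\in D$. Since $x\in W$, the condition $x\in A_n$ is equivalent to $x\in A_n\cap W$, so $\{n\in\omega:x\in A_n\}=\{n\in\omega:x\in A_n\cap W\}$, which gives membership in the right-hand side. The only subtlety is noting that the ambient set for $\HH_D^W$ is $W$, so the outer condition $x\in W$ is built into the right-hand side as well.

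For part \eqref{preimagesettheoretic}, I would use the trivial identity $\{n\in\omega:f(x)\in B_n\}=\{n\in\omega:x\in f^{-1}[B_n]\}$, valid for any function $f:Z\longrightarrow W$ and any $x\in Z$. Applying this inside the definition of $\HH_D$ yields the equality immediately. For part \eqref{homeomorphismsettheoretic}, bijectivity of $f$ lets me write $y\in f[A_n]$ iff $f^{-1}(y)\in A_n$ for each $n$, so for $y\in W$, $\{n\in\omega:f^{-1}(y)\in A_n\}=\{n\in\omega:y\in f[A_n]\}$, and again the result follows by unfolding $\HH_D$.

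There is no real obstacle here: the proposition is pure set-theoretic bookkeeping, and it is exactly the kind of routine verification that the authors deliberately skip. The only thing worth being careful about is keeping track of which ambient space the operation $\HH_D$ is referring to in part \eqref{subspacesettheoretic} (since $\HH_D^Z$ and $\HH_D^W$ are formally different functions), which is why the superscript notation $\HH_D^Z$, $\HH_D^W$ was introduced just before the statement.
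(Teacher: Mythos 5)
Your proof is correct, and it is exactly the routine membership chase the paper intends: the authors explicitly leave the proofs in this section to the reader, noting they are straightforward. Your handling of the ambient-space subtlety in part \eqref{subspacesettheoretic} and the use of bijectivity in part \eqref{homeomorphismsettheoretic} are both appropriate.
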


\section{Wadge classes and Hausdorff operations}

When one tries to give a systematic exposition of Wadge theory, it soon becomes apparent that it would be very useful to be able to talk about ``abstract'' Wadge classes, as opposed to Wadge classes in a particular space. More precisely, given a Wadge class $\bG$ in some space $Z$, one would like to find a way to define what a ``$\bG$ subset of $W$'' is, for every other space $W$, while of course preserving suitable coherence properties. It turns out that Hausdorff operations allow us to do exactly that in a rather elegant way, provided that $\bG$ is a non-selfdual Wadge class, $Z$ and $W$ are uncountable zero-dimensional Polish spaces, and $\AD$ holds (see also the discussion in Section 3). For an early instance of this idea, see \cite[Theorem 4.2]{louveausaintraymondc}.\footnote{\,This result is limited to the Borel context. On the other hand, the ambient space is allowed to be analytic, as opposed to Polish.} The following is the crucial definition. In fact, the aim of this section and the next two is to show that the classes $\bG_D(Z)$ have nice properties (see Lemma \ref{relativization} and Proposition \ref{orderisomorphism}), and that, under $\AD$, they are exactly the non-selfdual Wadge classes on $Z$ (see Theorem \ref{hausdorffmain}).

\begin{definition}
Given a space $Z$ and $D\subseteq\PP(\omega)$, define
$$
\bG_D(Z)=\{\HH_D(A_0,A_1,\ldots):A_n\in\mathbf{\Sigma}^0_1(Z)\text{ for every }n\in\omega\}.
$$
\end{definition}

As examples (that will be useful later), consider the following two simple propositions.

\begin{proposition}\label{hausdorffdifferences}
Let $1\leq\eta<\omega_1$. Then there exists $D\subseteq\PP(\omega)$ such that $\bG_D(Z)=\mathsf{D}_\eta(\mathbf{\Sigma}^0_1(Z))$ for every space $Z$.
\end{proposition}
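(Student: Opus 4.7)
\medskip

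The plan is to encode the $\eta$-indexed families appearing in the definition of $\mathsf{D}_\eta$ as $\omega$-indexed families, using a surjection, and then to design $D$ so that membership in $\HH_D(B_0,B_1,\ldots)$ corresponds exactly to the parity condition on the least index $\mu$ such that $x\in A_\mu$.

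First, since $\eta$ is a countable non-zero ordinal, I would fix a surjection $e:\omega\longrightarrow\eta$. Next, I would define
$$
D=\{z\subseteq\omega:\min(e[z])\text{ exists in }\eta\text{ and has the ``right'' parity}\},
$$
where ``right'' means odd if $\eta$ is even and even if $\eta$ is odd. (In particular, $\varnothing\notin D$, since $e[\varnothing]=\varnothing$ has no minimum.) This is the $D$ that the proposition asks for.

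To check that $\bG_D(Z)\subseteq\mathsf{D}_\eta(\mathbf{\Sigma}^0_1(Z))$, given arbitrary open sets $B_0,B_1,\ldots\subseteq Z$, I would put $A_\mu=\bigcup\{B_n:e(n)=\mu\}$ for each $\mu<\eta$; each $A_\mu$ is open since it is a union of open sets. For any $x\in Z$, if $z=\{n\in\omega:x\in B_n\}$, then $\{\mu<\eta:x\in A_\mu\}=e[z]$ by construction, so $z\in D$ iff the least $\mu$ with $x\in A_\mu$ has the right parity, iff $x\in \mathsf{D}_\eta(A_\mu:\mu<\eta)$. Hence $\HH_D(B_0,B_1,\ldots)=\mathsf{D}_\eta(A_\mu:\mu<\eta)\in\mathsf{D}_\eta(\mathbf{\Sigma}^0_1(Z))$.

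For the reverse inclusion $\mathsf{D}_\eta(\mathbf{\Sigma}^0_1(Z))\subseteq\bG_D(Z)$, given open sets $A_\mu$ for $\mu<\eta$, I would set $B_n=A_{e(n)}$; these are open. Then for any $x\in Z$, $\{n\in\omega:x\in B_n\}=e^{-1}[M_x]$ where $M_x=\{\mu<\eta:x\in A_\mu\}$, and since $e$ is surjective $e[e^{-1}[M_x]]=M_x$, so the minima agree. The same equivalence as above then gives $\HH_D(B_0,B_1,\ldots)=\mathsf{D}_\eta(A_\mu:\mu<\eta)$.

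There is no real obstacle here: the entire argument reduces to the observation that $\mathsf{D}_\eta(A_\mu:\mu<\eta)$ only depends, for each point $x$, on $M_x=\{\mu:x\in A_\mu\}$ and in fact only on $\min M_x$, and that a Hausdorff operation is tailor-made to test such a condition on the ``membership set'' $\{n:x\in B_n\}$. The only minor point worth stressing is that $e$ need only be a surjection (not a bijection), which handles finite $\eta$ uniformly alongside the infinite case.
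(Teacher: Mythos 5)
Your proof is correct and is essentially the paper's argument made explicit: the paper obtains $D$ by combining Propositions \ref{hausdorffsettheoretic} and \ref{hausdorffcomposition} after reindexing $\eta$ by $\omega$ (via a bijection when $\eta\geq\omega$), which amounts to exactly the same encoding of the condition ``the least $\mu$ with $x\in A_\mu$ has the right parity'' that you wrote out by hand. Your use of a surjection $e:\omega\longrightarrow\eta$ to treat finite and infinite $\eta$ uniformly, and the direct verification of both inclusions, are fine and fully compatible with the paper's definition of $\mathsf{D}_\eta$ (which does not require the sequence $(A_\mu)$ to be increasing).
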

\begin{proof}
This follows from Propositions \ref{hausdorffsettheoretic} and \ref{hausdorffcomposition} (in case $\eta>\omega$, use a bijection $\pi:\eta\longrightarrow\omega$).
\end{proof}

\begin{proposition}\label{hausdorffborel}
Let $1\leq\xi<\omega_1$. Then there exists $D\subseteq\PP(\omega)$ such that $\bG_D(Z)=\mathbf{\Sigma}^0_\xi(Z)$ for every space $Z$.
\end{proposition}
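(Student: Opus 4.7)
The plan is to proceed by induction on $\xi$, with the base case $\xi = 1$ being essentially immediate from Proposition \ref{hausdorffsettheoretic}: taking $D = S_0$, we get $\HH_{S_0}(A_0, A_1, \ldots) = A_0$, so as the $A_n$ range over $\mathbf{\Sigma}^0_1(Z)$ the resulting class is exactly $\mathbf{\Sigma}^0_1(Z) = \bG_{S_0}(Z)$, uniformly in $Z$.

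For the inductive step, fix $\xi \geq 2$ and recall that a set belongs to $\mathbf{\Sigma}^0_\xi(Z)$ iff it can be written as $\bigcup_{m\in\omega} B_m$, where each $B_m \in \mathbf{\Pi}^0_{\eta_m}(Z)$ for some $\eta_m < \xi$. Accordingly, I would first fix a sequence $(\eta_m)_{m\in\omega}$ of ordinals below $\xi$ such that $\sup_{m\in\omega}(\eta_m + 1) = \xi$ (constant equal to $\eta$ if $\xi = \eta + 1$, cofinal if $\xi$ is limit). By the inductive hypothesis, for each $m$ there is $D_m \subseteq \PP(\omega)$ such that $\bG_{D_m}(Z) = \mathbf{\Sigma}^0_{\eta_m}(Z)$ for every space $Z$; setting $E_m = \PP(\omega) \setminus D_m$ and using the complementation clause of Proposition \ref{hausdorffsettheoretic} gives $\bG_{E_m}(Z) = \mathbf{\Pi}^0_{\eta_m}(Z)$ for every $Z$. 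Next, to model countable unions, take $D = \{A \subseteq \omega : A \neq \varnothing\} = \bigcup_{n\in\omega} S_n$, so that $\HH_D(B_0, B_1, \ldots) = \bigcup_{m\in\omega} B_m$ by Proposition \ref{hausdorffsettheoretic}. Applying Proposition \ref{hausdorffcomposition} to this $D$ and these $E_m$ yields some $F \subseteq \PP(\omega)$ such that, for every $Z$ and all $A_0, A_1, \ldots \subseteq Z$,
$$
\HH_F(A_0, A_1, \ldots) = \bigcup_{m\in\omega} \HH_{E_m}(A_{\langle m, 0\rangle}, A_{\langle m, 1\rangle}, \ldots).
$$
It remains to verify that $\bG_F(Z) = \mathbf{\Sigma}^0_\xi(Z)$ for every space $Z$: the inclusion $\subseteq$ is immediate because plugging in open $A_k$ makes each inner $\HH_{E_m}$ a $\mathbf{\Pi}^0_{\eta_m}(Z)$ set, and their countable union is $\mathbf{\Sigma}^0_\xi(Z)$; for the converse, given $C = \bigcup_m B_m \in \mathbf{\Sigma}^0_\xi(Z)$ with $B_m \in \mathbf{\Pi}^0_{\eta_m}(Z)$, write each $B_m = \HH_{E_m}(A^{(m)}_0, A^{(m)}_1, \ldots)$ for suitable open $A^{(m)}_k$, and then set $A_{\langle m, k\rangle} = A^{(m)}_k$ to recover $C = \HH_F(A_0, A_1, \ldots)$.

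No real obstacle is expected: the argument is essentially an induction hidden inside the formalism of Proposition \ref{hausdorffcomposition}, and the only delicate point is bookkeeping of the sequence $(\eta_m)$ so that it captures the standard recursive definition of $\mathbf{\Sigma}^0_\xi$ uniformly for successor and limit $\xi$. Notably, the choice of $D$, $E_m$, and $F$ does not depend on $Z$, which is precisely what the statement requires.
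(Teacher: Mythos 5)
Your argument is correct and follows essentially the same route as the paper, which proves this proposition by induction on $\xi$ using Propositions \ref{hausdorffsettheoretic} and \ref{hausdorffcomposition}; you have simply filled in the details (complementation to pass to $\mathbf{\Pi}^0_{\eta_m}$, the union clause, and composition), and the bookkeeping with a fixed sequence $(\eta_m)$ cofinal in $\xi$ works as you indicate since the classes are monotone in separable metrizable spaces and empty sets can pad unused slots.
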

\begin{proof}
This can be proved by induction on $\xi$, using Propositions \ref{hausdorffsettheoretic} and \ref{hausdorffcomposition}.
\end{proof}

Next, we obtain a very useful lemma, which shows that this notion behaves well with respect to subspaces and continuous functions. This lemma is essentially what we refer to when we speak about the ``machinery of relativization''. It extends (and is inspired by) \cite[Lemma 2.3]{vanengelena}.

\begin{lemma}\label{relativization}
Let $Z$ and $W$ be spaces, and let $D\subseteq\PP(\omega)$.
\begin{enumerate}
\item\label{subspace} Assume that $W\subseteq Z$. Then $B\in\bG_D(W)$ iff there exists $A\in\bG_D(Z)$ such that $B=A\cap W$.
\item\label{preimage} If $f:Z\longrightarrow W$ is continuous and $B\in\bG_D(W)$ then $f^{-1}[B]\in\bG_D(Z)$.
\item\label{homeomorphism} If $h:Z\longrightarrow W$ is a homeomorphism then $A\in\bG_D(Z)$ iff $h[A]\in\bG_D(W)$.
\end{enumerate}
\end{lemma}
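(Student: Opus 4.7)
The plan is to reduce each of the three parts to the corresponding clause of Proposition \ref{relativizationsettheoretic}, combined with the basic topological facts that (a) open sets in a subspace are exactly restrictions of open sets in the ambient space, and (b) continuous preimages of open sets are open.

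For part (\ref{subspace}), I would unfold both directions symmetrically. For the right-to-left direction, any $A\in\bG_D(Z)$ is of the form $A=\HH_D^Z(A_0,A_1,\ldots)$ with each $A_n\in\mathbf{\Sigma}^0_1(Z)$; then by Proposition \ref{relativizationsettheoretic}(\ref{subspacesettheoretic}), $A\cap W=\HH_D^W(A_0\cap W,A_1\cap W,\ldots)$, and each $A_n\cap W\in\mathbf{\Sigma}^0_1(W)$, so $A\cap W\in\bG_D(W)$. For the left-to-right direction, write $B=\HH_D^W(B_0,B_1,\ldots)$ with each $B_n\in\mathbf{\Sigma}^0_1(W)$, lift each $B_n$ to some $A_n\in\mathbf{\Sigma}^0_1(Z)$ with $A_n\cap W=B_n$, and set $A=\HH_D^Z(A_0,A_1,\ldots)$; then $A\in\bG_D(Z)$ and Proposition \ref{relativizationsettheoretic}(\ref{subspacesettheoretic}) again gives $A\cap W=B$.

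For part (\ref{preimage}), write $B=\HH_D(B_0,B_1,\ldots)$ with $B_n\in\mathbf{\Sigma}^0_1(W)$. By Proposition \ref{relativizationsettheoretic}(\ref{preimagesettheoretic}), $f^{-1}[B]=\HH_D(f^{-1}[B_0],f^{-1}[B_1],\ldots)$, and continuity of $f$ ensures each $f^{-1}[B_n]\in\mathbf{\Sigma}^0_1(Z)$, so $f^{-1}[B]\in\bG_D(Z)$.

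Part (\ref{homeomorphism}) is then immediate: a homeomorphism $h$ and its inverse $h^{-1}$ are both continuous, so applying part (\ref{preimage}) to $h^{-1}$ shows that $h[A]=(h^{-1})^{-1}[A]\in\bG_D(W)$ whenever $A\in\bG_D(Z)$, and applying it to $h$ gives the converse. (Alternatively, one can invoke Proposition \ref{relativizationsettheoretic}(\ref{homeomorphismsettheoretic}) directly with the fact that homeomorphisms send open sets to open sets.) There is no genuine obstacle here; the only minor subtlety is the lifting step in part (\ref{subspace}), which is just the definition of the subspace topology. The whole argument is essentially a transcription of Proposition \ref{relativizationsettheoretic} into topological language.
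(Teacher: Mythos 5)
Your proof is correct and is essentially the paper's own argument: the paper simply states that the lemma is a straightforward consequence of Proposition \ref{relativizationsettheoretic}, and your write-up supplies exactly the intended details (restricting and lifting open sets for part (\ref{subspace}), continuity of preimages for part (\ref{preimage}), and applying part (\ref{preimage}) to $h$ and $h^{-1}$ for part (\ref{homeomorphism})).
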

\begin{proof}
This is a straightforward consequence of Proposition \ref{relativizationsettheoretic}.
\end{proof}

The following simple result, together with Theorem \ref{hausdorffmain}, shows that the ordering of the non-selfdual Wadge classes is independent of the ambient space $Z$ (provided that $\AD$ holds).

\begin{proposition}\label{orderisomorphism}
Let $Z$ and $W$ be zero-dimensional spaces that contain a copy of $2^\omega$, and let $D,E\subseteq\PP(\omega)$. Then $\bG_D(Z)\subseteq\bG_E(Z)$ iff $\bG_D(W)\subseteq\bG_E(W)$.
\end{proposition}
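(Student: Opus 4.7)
My plan is to prove the equivalence using just the three parts of Lemma \ref{relativization}, together with the classical fact that every zero-dimensional (separable metrizable) space is homeomorphic to a subspace of $2^\omega$. Since the statement is symmetric in $Z$ and $W$, it suffices to establish one implication: assuming $\bG_D(Z)\subseteq\bG_E(Z)$, I will show $\bG_D(W)\subseteq\bG_E(W)$.

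The first step is to produce a topological embedding $i\colon W\longrightarrow Z$. Because $W$ is zero-dimensional and separable metrizable, $W$ embeds into $2^\omega$; composing with an embedding of $2^\omega$ into $Z$ (which exists by hypothesis) yields such an $i$. I will write $W'=i[W]\subseteq Z$, viewed with the subspace topology inherited from $Z$, so that $i\colon W\longrightarrow W'$ is a homeomorphism.

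Now take an arbitrary $B\in\bG_D(W)$ and put $B'=i[B]\subseteq W'$. The plan is to transfer $B$ to $W'$, extend to $Z$, apply the hypothesis, restrict back to $W'$, and transfer back to $W$. Explicitly: by Lemma \ref{relativization}\eqref{homeomorphism} applied to $i$, we have $B'\in\bG_D(W')$. By Lemma \ref{relativization}\eqref{subspace} applied to the inclusion $W'\subseteq Z$, there exists $A\in\bG_D(Z)$ with $A\cap W'=B'$. The assumption $\bG_D(Z)\subseteq\bG_E(Z)$ gives $A\in\bG_E(Z)$, and then Lemma \ref{relativization}\eqref{subspace} yields $B'=A\cap W'\in\bG_E(W')$. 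Finally, Lemma \ref{relativization}\eqref{homeomorphism} applied to $i^{-1}\colon W'\longrightarrow W$ shows $B\in\bG_E(W)$, as required.

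There is no real obstacle in this argument: it is essentially a diagram chase through the three parts of the relativization lemma, and the only non-trivial input is the embedding of an arbitrary zero-dimensional space into $2^\omega$. The hypothesis that both $Z$ and $W$ contain a copy of $2^\omega$ is used solely to ensure that each of them serves as an ambient space into which the other can be topologically embedded.
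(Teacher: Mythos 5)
Your argument is correct and is essentially the paper's own proof: the paper also embeds $W$ into $Z$ (using that $W$ is zero-dimensional and $Z$ contains a copy of $2^\omega$), reduces via Lemma \ref{relativization}\eqref{homeomorphism} to the case $W\subseteq Z$, and then transfers sets back and forth with Lemma \ref{relativization}\eqref{subspace}, handling the converse by symmetry. Your version merely writes out the embedding explicitly instead of invoking ``without loss of generality.''
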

\begin{proof}
Assume that $\bG_D(Z)\subseteq\bG_E(Z)$. Since $Z$ contains a copy of $2^\omega$ and $W$ is zero-dimensional, we see that $Z$ contains a copy of $W$. Using Lemma \ref{relativization}.\ref{homeomorphism}, we can assume without loss of generality that $W\subseteq Z$. Then
$$
\bG_D(W)=\{A\cap W:A\in\bG_D(Z)\}\subseteq\{A\cap W:A\in\bG_E(Z)\}=\bG_E(W),
$$
where the first and last equalities hold by Lemma \ref{relativization}.\ref{subspace}. The proof of the other implication is similar.
\end{proof}

\section{Universal sets}

The aim of this section is to prove the easier half of Theorem \ref{hausdorffmain} (namely, Theorem \ref{addison}). The ideas presented here are well-known, but since we could not find a satisfactory reference, we will give all the details. Our approach is inspired by \cite[Section 22.A]{kechris}. 

\begin{definition}
Let $Z$ and $W$ be spaces, and let $D\subseteq\PP(\omega)$. Given $U\subseteq W\times Z$ and $x\in W$, let $U_x=\{y\in Z:(x,y)\in U\}$ denote the vertical section of $U$ above $x$. We will say that $U\subseteq W\times Z$ is a \emph{$W$-universal set} for $\bG_D(Z)$ if the following two conditions hold:
\begin{itemize}
\item $U\in\bG_D(W\times Z)$,
\item $\{U_x:x\in W\}=\bG_D(Z)$.
\end{itemize}	
\end{definition}

Notice that, by Proposition \ref{hausdorffborel}, the above yields the definition of a $W$-universal set for $\mathbf{\Sigma}^0_\xi(Z)$ whenever $1\leq\xi <\omega_1$. Furthermore, this definition agrees with \cite[Definition 22.2]{kechris}.

\begin{proposition}\label{existsCuniversal}
Let $Z$ be a space, and let $D\subseteq\PP(\omega)$. Then there exists a $2^\omega$-universal set for $\bG_D(Z)$.
\end{proposition}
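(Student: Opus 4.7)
The plan is to first produce a $2^\omega$-universal set for $\mathbf{\Sigma}^0_1(Z)$ by the standard basis construction, and then to lift it to a $2^\omega$-universal set for $\bG_D(Z)$ by encoding an entire sequence of codes of open sets into a single element of $2^\omega$.

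First I would fix a countable base $\{B_n:n\in\omega\}$ for $Z$ and define
$$V=\{(x,z)\in 2^\omega\times Z:z\in B_n\text{ for some }n\text{ with }x(n)=1\}.$$
This $V$ is open in $2^\omega\times Z$, being a union of the rectangles $\{x\in 2^\omega:x(n)=1\}\times B_n$, and every $O\in\mathbf{\Sigma}^0_1(Z)$ equals $V_x$ for $x$ the characteristic function of any set of indices $I\subseteq\omega$ with $O=\bigcup_{n\in I}B_n$. So $V$ is $2^\omega$-universal for $\mathbf{\Sigma}^0_1(Z)$.

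Next, using the bijection $\pi:\omega\times\omega\longrightarrow\omega$ already fixed before Proposition \ref{hausdorffcomposition}, for each $x\in 2^\omega$ let $x_n\in 2^\omega$ be defined by $x_n(k)=x(\langle n,k\rangle)$. The map $\varphi_n:2^\omega\times Z\longrightarrow 2^\omega\times Z$ given by $\varphi_n(x,z)=(x_n,z)$ is continuous, so $W_n:=\varphi_n^{-1}[V]$ is open in $2^\omega\times Z$. Setting $U=\HH_D^{2^\omega\times Z}(W_0,W_1,\ldots)$, we have $U\in\bG_D(2^\omega\times Z)$ directly from the definition.

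It remains to check that $\{U_x:x\in 2^\omega\}=\bG_D(Z)$. Identifying $\{x\}\times Z$ with $Z$, one has $W_n\cap(\{x\}\times Z)=\{x\}\times V_{x_n}$, so Proposition \ref{relativizationsettheoretic}.\ref{subspacesettheoretic} yields $U_x=\HH_D^Z(V_{x_0},V_{x_1},\ldots)\in\bG_D(Z)$. Conversely, given $A=\HH_D(A_0,A_1,\ldots)\in\bG_D(Z)$ with each $A_n$ open, I choose $y_n\in 2^\omega$ with $V_{y_n}=A_n$ (possible by universality of $V$) and define $x\in 2^\omega$ by $x(\langle n,k\rangle)=y_n(k)$; then $x_n=y_n$ for every $n$, hence $U_x=A$.

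There is no real obstacle here. The only mildly delicate point is keeping track of the two uses of Proposition \ref{relativizationsettheoretic} (one via preimage to define $W_n$, one via subspace to compute $U_x$); the edge case $Z=\varnothing$ is trivial, since then $\bG_D(Z)\subseteq\{\varnothing\}$ and $U=\varnothing$ works.
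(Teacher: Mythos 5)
Your proof is correct and follows essentially the same route as the paper: build a $2^\omega$-universal open set, decode a single point of $2^\omega$ into an $\omega$-sequence of codes for open sets (your pairing-function trick is just an explicit homeomorphism $2^\omega\approx(2^\omega)^\omega$, which is what the paper uses), and apply $\HH_D$ coordinatewise, checking the sections via Proposition \ref{relativizationsettheoretic}. The only cosmetic difference is that you construct the universal open set by hand from a countable base rather than citing it.
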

\begin{proof}
By \cite[Theorem 22.3]{kechris}, we can fix a $2^\omega$-universal set $U$ for $\mathbf{\Sigma}^0_1(Z)$. Let $h:2^\omega\longrightarrow (2^\omega)^\omega$ be a homeomorphism, and let $\pi_n:(2^\omega)^\omega\longrightarrow 2^\omega$ be the projection on the $n$-th coordinate for $n\in\omega$. Notice that, given any $n\in\omega$, the function $f_n:2^\omega\times Z\longrightarrow 2^\omega\times Z$ defined by $f_n(x,y)=(\pi_n(h(x)),y)$ is continuous. Let $V_n=f_n^{-1}[U]$ for each $n$, and observe that each $V_n\in\mathbf{\Sigma}^0_1(2^\omega\times Z)$. Set $V=\HH_D(V_0,V_1,\ldots)$.

We claim that $V$ is a $2^\omega$-universal set for $\bG_D(Z)$. It is clear that $V\in\bG_D(2^\omega\times Z)$. Furthermore, using Lemma \ref{relativization}, one can easily check that $V_x\in\bG_D(Z)$ for every $x\in 2^\omega$. To complete the proof, fix $A\in\bG_D(Z)$. Let $A_0,A_1,\ldots\in\mathbf{\Sigma}^0_1(Z)$ be such that $A=\HH_D(A_0,A_1,\ldots)$. Since $U$ is $2^\omega$-universal, we can fix $z_n\in 2^\omega$ such that $U_{z_n}=A_n$ for every $n\in\omega$. Set $z=h^{-1}(z_0,z_1,\ldots)$. It is straightforward to verify that $V_z=A$.
\end{proof}

\begin{corollary}\label{existsZuniversal}
Let $Z$ be a space that contains a copy of $2^\omega$, and let $D\subseteq\PP(\omega)$. Then there exists a $Z$-universal set for $\bG_D(Z)$.
\end{corollary}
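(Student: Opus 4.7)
The plan is to leverage the $2^\omega$-universal set provided by Proposition \ref{existsCuniversal} together with the relativization machinery of Lemma \ref{relativization}. Since $Z$ contains a copy of $2^\omega$, fix an embedding $h:2^\omega\longrightarrow Z$ with image $C=h[2^\omega]$, and let $U\subseteq 2^\omega\times Z$ be a $2^\omega$-universal set for $\bG_D(Z)$. The map $h\times\id_Z:2^\omega\times Z\longrightarrow C\times Z$ is a homeomorphism, so by Lemma \ref{relativization}.\ref{homeomorphism} the set $U'=(h\times\id_Z)[U]$ lies in $\bG_D(C\times Z)$. Since $C\times Z$ is a subspace of $Z\times Z$, Lemma \ref{relativization}.\ref{subspace} then furnishes some $V\in\bG_D(Z\times Z)$ with $V\cap(C\times Z)=U'$. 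My claim is that this $V$ is the desired $Z$-universal set for $\bG_D(Z)$.

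It remains to verify $\{V_x:x\in Z\}=\bG_D(Z)$. For the inclusion $\subseteq$, fix $x\in Z$ and note that the map $\iota_x:Z\longrightarrow Z\times Z$ defined by $\iota_x(y)=(x,y)$ is continuous with $\iota_x^{-1}[V]=V_x$, so Lemma \ref{relativization}.\ref{preimage} gives $V_x\in\bG_D(Z)$. For the reverse inclusion, given $A\in\bG_D(Z)$ use the $2^\omega$-universality of $U$ to pick $z\in 2^\omega$ with $U_z=A$ and set $x=h(z)\in C$; then for every $y\in Z$ one has $(x,y)\in C\times Z$, hence $(x,y)\in V$ iff $(x,y)\in U'$ iff $(z,y)\in U$, whence $V_x=U_z=A$.

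The argument is essentially mechanical, and the only step requiring any care is the last computation of $V_x$, where one must remember that $V$ and $U'$ agree precisely on the subspace $C\times Z$ which contains every point $(x,y)$ of the form considered. Neither $\AD$ nor any uncountability hypothesis on $Z$ beyond the one given is invoked; the result is a purely structural consequence of Proposition \ref{existsCuniversal} combined with the coherence properties of the families $\bG_D$.
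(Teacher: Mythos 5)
Your proposal is correct and follows essentially the same route as the paper: take the $2^\omega$-universal set from Proposition \ref{existsCuniversal}, transfer it via a homeomorphism onto a copy of $2^\omega$ inside $Z$ using Lemma \ref{relativization}.\ref{homeomorphism}, extend to $Z\times Z$ via Lemma \ref{relativization}.\ref{subspace}, and verify universality of the sections with Lemma \ref{relativization}.\ref{preimage}. The only difference is that you spell out the final verification that the paper leaves as an easy check, and your computation of $V_x$ there is accurate.
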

\begin{proof}
By Proposition \ref{existsCuniversal}, we can fix a $2^\omega$-universal set $U$ for $\bG_D(Z)$. Let $W\subseteq Z$ be such that $W\approx 2^\omega$, and fix a homeomorphism $h:2^\omega\longrightarrow W$. Notice that $(h\times\id_Z)[U]\in\bG_D(W\times Z)$ by Lemma \ref{relativization}.\ref{homeomorphism}. Therefore, by Lemma \ref{relativization}.\ref{subspace}, there exists $V\in\bG_D(Z\times Z)$ such that $V\cap (W\times Z)=(h\times\id_Z)[U]$. Using Lemma \ref{relativization} again, one can easily check that $V$ is a $Z$-universal set for $\bG_D(Z)$.
\end{proof}

\begin{lemma}\label{Zuniversalnonselfdual}
Let $Z$ be a space, and let $D\subseteq\PP(\omega)$. Assume that there exists a $Z$-universal set for $\bG_D(Z)$. Then $\bG_D(Z)$ is non-selfdual.
\end{lemma}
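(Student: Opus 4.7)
The plan is to run the standard Cantor-style diagonal argument using the given $Z$-universal set. Let $U \subseteq Z\times Z$ be the assumed $Z$-universal set for $\bG_D(Z)$. I will produce a set $A\in\bG_D(Z)$ such that $Z\setminus A\notin\bG_D(Z)$, which is exactly the statement that $\bG_D(Z)$ is non-selfdual.

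First I would take the diagonal map $\delta:Z\longrightarrow Z\times Z$ defined by $\delta(x)=(x,x)$, which is continuous, and set
\[
\Delta=\delta^{-1}[U]=\{x\in Z:(x,x)\in U\}.
\]
Since $U\in\bG_D(Z\times Z)$ and $\delta$ is continuous, Lemma~\ref{relativization}.\ref{preimage} gives $\Delta\in\bG_D(Z)$. The candidate witness of non-selfduality is then $A=\Delta$, and I would show that its complement $B=Z\setminus\Delta$ does not belong to $\bG_D(Z)$.

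For this I would argue by contradiction. Suppose $B\in\bG_D(Z)$. By $Z$-universality, the vertical sections $\{U_x:x\in Z\}$ exhaust $\bG_D(Z)$, so there exists $x_0\in Z$ with $U_{x_0}=B$. Now evaluate membership of $x_0$: on one hand, $x_0\in U_{x_0}$ iff $x_0\in B$ iff $x_0\notin\Delta$ iff $(x_0,x_0)\notin U$, i.e., iff $x_0\notin U_{x_0}$. This contradiction shows $B\notin\bG_D(Z)$, so $\bG_D(Z)\neq\check{\bG}_D(Z)$, as required.

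There is no real obstacle here: the only ingredients are the definition of $Z$-universality, the preimage-stability in Lemma~\ref{relativization}.\ref{preimage}, and the classical diagonalization trick. The argument does not even require $\AD$ or any hypothesis on $Z$ beyond what is needed for the $Z$-universal set to exist.
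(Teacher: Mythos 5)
Your proof is correct and takes essentially the same route as the paper: both are the classical Cantor diagonalization using the diagonal map $x\mapsto(x,x)$, continuity-stability of $\bG_D$ (Lemma \ref{relativization}.\ref{preimage}), and $Z$-universality. The only cosmetic difference is that you exhibit the diagonal set directly and show its complement lies outside $\bG_D(Z)$, whereas the paper assumes selfduality and derives the contradiction from that assumption.
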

\begin{proof}
Fix a $Z$-universal set $U\subseteq Z\times Z$ for $\bG_D(Z)$. Assume, in order to get a contradiction, that $\bG_D(Z)$ is selfdual. Let $f:Z\longrightarrow Z\times Z$ be the function defined by $f(x)=(x,x)$, and observe that $f$ is continuous. Since $f^{-1}[U]\in\bG_D(Z)=\bGc_D(Z)$, we see that $Z\setminus f^{-1}[U]\in\bG_D(Z)$. Therefore, since $U$ is $Z$-universal, we can fix $z\in Z$ such that $U_z=Z\setminus f^{-1}[U]$. If $z\in U_z$ then $f(z)=(z,z)\in U$ by the definition of $U_z$, contradicting the fact that $U_z=Z\setminus f^{-1}[U]$. On the other hand, If $z\notin U_z$ then $f(z)=(z,z)\notin U$ by the definition of $U_z$, contradicting the fact that $Z\setminus U_z=f^{-1}[U]$.
\end{proof}

The case $Z=\omega^\omega$ of the following result is \cite[Proposition 5.0.3]{vanwesept}, and it is credited to Addison by Van Wesep.
\begin{theorem}\label{addison}
Let $Z$ be a zero-dimensional space that contains a copy of $2^\omega$, and let $D\subseteq\PP(\omega)$. Then $\bG_D(Z)\in\NSD(Z)$.
\end{theorem}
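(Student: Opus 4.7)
The plan is to verify the two defining properties of $\NSD(Z)$ separately: that $\bG_D(Z)$ is a Wadge class in $Z$, and that it is non-selfdual. The latter is essentially free from the machinery already assembled, since Corollary~\ref{existsZuniversal} provides a $Z$-universal set for $\bG_D(Z)$ and Lemma~\ref{Zuniversalnonselfdual} then yields $\bG_D(Z) \neq \bGc_D(Z)$. Thus the substantive content is to produce $A \subseteq Z$ with $[A] = \bG_D(Z)$.

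Given the continuous closure of $\bG_D(Z)$ (Lemma~\ref{relativization}.\ref{preimage}), the inclusion $[A] \subseteq \bG_D(Z)$ will be automatic as soon as we ensure $A \in \bG_D(Z)$. So the heart of the matter is to exhibit a single $A \in \bG_D(Z)$ to which every member of $\bG_D(Z)$ Wadge-reduces. Following the classical case $Z = \omega^\omega$ credited to Addison, my plan is to take the $Z$-universal set $U \subseteq Z \times Z$ from Corollary~\ref{existsZuniversal} and encode it as a subset of $Z$ via a suitable continuous embedding $e: Z \times Z \to Z$.

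Constructing such an $e$ is the step I would flag as the only real obstacle, and it is not a serious one: since $Z$ is separable, metrizable, and zero-dimensional, so is $Z \times Z$, and any such space embeds into $2^\omega$ via the standard characteristic-function map $y \mapsto (\chi_{B_n}(y))_{n \in \omega}$, where $\{B_n : n \in \omega\}$ is a countable clopen base of $Z \times Z$. Composing with the hypothesized embedding of $2^\omega$ into $Z$ yields $e$. Given $e$, Lemma~\ref{relativization}.\ref{homeomorphism} gives $e[U] \in \bG_D(e[Z \times Z])$, and Lemma~\ref{relativization}.\ref{subspace} lifts this to an $A \in \bG_D(Z)$ with $A \cap e[Z \times Z] = e[U]$. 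To verify Wadge-completeness of this $A$, fix $B \in \bG_D(Z)$, pick $x \in Z$ with $U_x = B$, and set $f(y) = e(x, y)$; then $f: Z \to Z$ is continuous, and injectivity of $e$ yields that $y \in f^{-1}[A]$ iff $(x,y) \in U$ iff $y \in B$, giving $B \leq A$ as required.
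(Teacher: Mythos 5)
Your proposal is correct and follows essentially the same route as the paper: non-selfduality via Corollary~\ref{existsZuniversal} and Lemma~\ref{Zuniversalnonselfdual}, and Wadge-completeness by embedding a universal set into $Z$ and lifting it with Lemma~\ref{relativization}. The only (immaterial) difference is that you encode the $Z$-universal set $U\subseteq Z\times Z$, whereas the paper embeds a copy of $2^\omega\times Z$ carrying the $2^\omega$-universal set from Proposition~\ref{existsCuniversal}; both reductions $y\mapsto e(x,y)$ work identically.
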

\begin{proof}
The fact that $\bG_D(Z)$ is non-selfdual follows from Corollary \ref{existsZuniversal} and Lemma \ref{Zuniversalnonselfdual}. Therefore, it will be enough to show that $\bG_D(Z)$ is a Wadge class. By Proposition \ref{existsCuniversal}, we can fix a $2^\omega$-universal set $U\subseteq 2^\omega\times Z$ for $\bG_D(Z)$. Let $W\subseteq Z$ be such that $W\approx 2^\omega\times Z$, and fix a homeomorphism $h:2^\omega\times Z\longrightarrow W$. By Lemma \ref{relativization}, we can fix $A\in\bG_D(Z)$ such that $A\cap W=h[U]$. We claim that $\bG_D(Z)=[A]$. The inclusion $\supseteq$ follows from Lemma \ref{relativization}.\ref{preimage}. In order to prove the other inclusion, pick $B\in\bG_D(Z)$. Since $U$ is $2^\omega$-universal, we can fix $z\in 2^\omega$ such that $B=U_z$. Consider the function $f:Z\longrightarrow 2^\omega\times Z$ defined by $f(x)=(z,x)$, and observe that $f$ is continuous. It is straightforward to check that $h\circ f:Z\longrightarrow Z$ witnesses that $B\leq A$ in $Z$.
\end{proof}	

\section{Van Wesep's theorem}

The following is one of the main results of Van Wesep's doctoral thesis (see \cite[Theorem 5.3.1]{vanwesept}, whose proof also employs results of Steel from \cite{steelt} and results of Radin), and it will allow us to obtain the harder half of Theorem \ref{hausdorffmain}. Notice how Corollary \ref{hausdorffevery} guarantees that every non-selfdual Wadge class is amenable to the machinery of relativization.

\begin{theorem}[Van Wesep]\label{hausdorffbaire}
Assume $\AD$. For every $\bG\in\NSD(\omega^\omega)$ there exists $D\subseteq\PP(\omega)$ such that $\bG=\bG_D(\omega^\omega)$.
\end{theorem}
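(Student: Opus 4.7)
The plan is to show that the map $D\mapsto \bG_D(\omega^\omega)$ is surjective onto $\NSD(\omega^\omega)$. By Theorem~\ref{addison} its image is contained in $\NSD(\omega^\omega)$, and under $\AD$ the collection $\NSD(\omega^\omega)$ is linearly well-ordered by $\subseteq$ (via Theorem~\ref{wellfounded} and Lemma~\ref{wadgelemma}), so I would organize the argument as a transfinite induction on the Wadge rank $||\bG||$ from Definition~\ref{wadgerank}, showing at each stage that the rank in question is attained by some $\bG_D(\omega^\omega)$.

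For the base cases, Proposition~\ref{hausdorffsettheoretic} offers explicit choices: $D=\varnothing$ yields $\{\varnothing\}$, $D=\PP(\omega)$ yields $\{\omega^\omega\}$, $D=S_0$ yields $\mathbf{\Sigma}^0_1(\omega^\omega)$, and complementation on $D$ produces the dual classes. For a successor step, if $\bG\in\NSD(\omega^\omega)$ has an immediate $\NSD$-predecessor $\bL$ with $\bL=\bG_E(\omega^\omega)$ by the inductive hypothesis, the goal is to describe $\bG$ from $\bL$ by an operation that fits inside a Hausdorff operation; typical examples are the differencing constructions (cf.\ Proposition~\ref{hausdorffdifferences}) and ``separated union'' type constructions, and in each such case Proposition~\ref{hausdorffcomposition} repackages the composite into a single $F\subseteq\PP(\omega)$ with $\bG=\bG_F(\omega^\omega)$. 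At a limit stage, the task is to identify $\bG$ as a suitable ``join'' of a cofinal sequence of lower non-selfdual classes $\bL_n=\bG_{E_n}(\omega^\omega)$ and to fuse the $E_n$'s into one $D$, again via Proposition~\ref{hausdorffcomposition} combined with the set-theoretic operations of Proposition~\ref{hausdorffsettheoretic}.

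The main obstacle is that both the successor and limit steps depend on a deep structural theorem describing how every non-selfdual Wadge class is built from strictly lower ones in a form that Hausdorff operations can emulate. This is precisely what Steel's analysis in \cite{steelt}, together with Radin's contributions, provides in the general $\AD$-setting, in the same way that Louveau's \cite{louveaua} furnishes it in the Borel realm. Without such a description there is, a priori, no reason the chain $\{\bG_D(\omega^\omega):D\subseteq\PP(\omega)\}$ should have no gaps, and indeed the entire force of the theorem is to rule these gaps out. Consequently my proposal ultimately hinges on invoking Steel and Radin's classification; the technical heart of the proof would consist of verifying that the ``canonical description'' they attach to each non-selfdual class can be systematically translated into a single combinatorial datum $D\subseteq\PP(\omega)$, and that this translation is compatible with the inductive bookkeeping on Wadge rank.
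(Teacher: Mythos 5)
The paper does not prove this statement at all: it is quoted as a black box from Van Wesep's thesis (\cite[Theorem 5.3.1]{vanwesept}), with the remark that Van Wesep's proof uses results of Steel from \cite{steelt} and of Radin. So the relevant question is whether your outline would constitute an actual proof, and it would not: the load-bearing step is missing. Your induction on Wadge rank needs, at every successor and limit stage, a structural theorem saying that an arbitrary non-selfdual Wadge class under $\AD$ is obtained from strictly lower classes by operations that can be emulated by Hausdorff operations. No such ``Steel--Radin classification'' exists in the literature in the form you invoke; what Steel's thesis and Radin's work supply to Van Wesep are different ingredients (game-theoretic and separation-type results, and facts about $\omega$-ary Boolean operations), not a canonical description of each non-selfdual class in terms of its predecessors. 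Indeed, the paper itself stresses that extending Louveau's Borel analysis \cite{louveaua} -- which is exactly the kind of description you are assuming -- beyond the Borel realm ``appears to be a very hard problem'', and the whole architecture of the paper is designed to avoid needing it. Asserting that every non-selfdual class can be ``described from below'' in a Hausdorff-operation-compatible way is essentially a restatement of the theorem to be proved, so the proposal is circular at its core.

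A secondary problem is that even granting the inductive hypothesis that all lower non-selfdual classes are of the form $\bG_E(\omega^\omega)$, nothing in Propositions \ref{hausdorffsettheoretic} and \ref{hausdorffcomposition} tells you which combination of those classes yields the next non-selfdual pair; the gap-freeness of the family $\{\bG_D(\omega^\omega):D\subseteq\PP(\omega)\}$ in the Wadge hierarchy is precisely the content of Van Wesep's theorem, and your limit and successor steps presuppose it rather than establish it. If you want a genuine argument here, you would need to reproduce Van Wesep's actual proof (or an equivalent), not an induction on $||\bG||$ resting on an unproved classification.
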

\begin{corollary}\label{hausdorffevery}
Assume $\AD$. Let $Z$ be a zero-dimensional Polish space, and let $\bG\in\NSD(Z)$. Then there exists $D\subseteq\PP(\omega)$ such that $\bG=\bG_D(Z)$.	
\end{corollary}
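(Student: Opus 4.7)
The plan is to reduce the general zero-dimensional Polish case to the case $Z=\omega^\omega$ already handled by Theorem \ref{hausdorffbaire}, using the retraction machinery encoded in Lemma \ref{bairetoall} together with the relativization properties of $\bG_D$ in Lemma \ref{relativization}.

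First I would invoke \cite[Theorem 7.8]{kechris} to assume without loss of generality that $Z$ is a closed subspace of $\omega^\omega$, and fix a retraction $\rho:\omega^\omega\longrightarrow Z$ as in \cite[Proposition 2.8]{kechris}. Choose $A\subseteq Z$ with $\bG=[A]$ in $Z$, and set $\widetilde{A}=\rho^{-1}[A]\subseteq\omega^\omega$. The first key observation is that $\widetilde{A}$ is non-selfdual in $\omega^\omega$: since $\rho$ is surjective, $\omega^\omega\setminus\widetilde{A}=\rho^{-1}[Z\setminus A]$, so if $\widetilde{A}\equiv\omega^\omega\setminus\widetilde{A}$ in $\omega^\omega$ then Lemma \ref{bairetoall} would give $A\equiv Z\setminus A$ in $Z$, contradicting $\bG\in\NSD(Z)$. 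Hence $[\widetilde{A}]\in\NSD(\omega^\omega)$, and Theorem \ref{hausdorffbaire} produces $D\subseteq\PP(\omega)$ with $[\widetilde{A}]=\bG_D(\omega^\omega)$ in $\omega^\omega$. This $D$ is my candidate.

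To finish, I would verify both inclusions of $\bG=\bG_D(Z)$. For $\bG\subseteq\bG_D(Z)$: note that $\widetilde{A}\cap Z=A$ because $\rho\re Z=\id_Z$, so $\widetilde{A}\in\bG_D(\omega^\omega)$ together with Lemma \ref{relativization}.\ref{subspace} gives $A\in\bG_D(Z)$; since Lemma \ref{relativization}.\ref{preimage} shows that $\bG_D(Z)$ is continuously closed, we get $\bG=[A]\subseteq\bG_D(Z)$. For $\bG_D(Z)\subseteq\bG$: given $B\in\bG_D(Z)$, the continuity of $\rho$ and Lemma \ref{relativization}.\ref{preimage} yield $\rho^{-1}[B]\in\bG_D(\omega^\omega)=[\widetilde{A}]$, so $\rho^{-1}[B]\leq\widetilde{A}=\rho^{-1}[A]$ in $\omega^\omega$, and Lemma \ref{bairetoall} translates this back into $B\leq A$ in $Z$, i.e.\ $B\in\bG$.

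The proof is essentially a packaging exercise, and there is no serious obstacle once the right objects are in place: the only point that might look like it requires care is the verification that non-selfduality survives the transfer $A\mapsto\rho^{-1}[A]$, but this follows immediately from Lemma \ref{bairetoall} applied to the pair $(A,Z\setminus A)$. Everything else is a direct application of the ``machinery of relativization'' developed in Section 6, exactly as advertised.
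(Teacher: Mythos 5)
Your proposal is correct and follows essentially the same route as the paper's own proof: embed $Z$ as a closed subspace of $\omega^\omega$, pull $A$ back along a retraction, transfer non-selfduality via Lemma \ref{bairetoall}, apply Theorem \ref{hausdorffbaire}, and verify both inclusions using Lemma \ref{relativization} and Lemma \ref{bairetoall}. The only difference is that you spell out the non-selfduality transfer (which the paper leaves as ``easy to see''), and your verification is accurate.
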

\begin{proof}
By \cite[Theorem 7.8]{kechris}, there exists a closed $W\subseteq\omega^\omega$ such that $Z\approx W$. Therefore, using Lemma \ref{relativization}.\ref{homeomorphism}, we can assume without loss of generality that $Z$ is a closed subspace of $\omega^\omega$. Hence, by \cite[Proposition 2.8]{kechris}, we can fix a retraction $\rho:\omega^\omega\longrightarrow Z$. Let $A\subseteq Z$ be such that $\bG=[A]$. Set $B=\rho^{-1}[A]$, and let $\bL=[B]$ be the Wadge class generated by $B$ in $\omega^\omega$.

Using Lemma \ref{bairetoall}, it is easy to see that $\bL\in\NSD(\omega^\omega)$. Therefore, by Theorem \ref{hausdorffbaire}, we can fix $D\subseteq\PP(\omega)$ such that $\bL=\bG_D(\omega^\omega)$. We claim that $\bG=\bG_D(Z)$. Notice that $A=B\cap Z\in\bG_D(Z)$ by Lemma \ref{relativization}.\ref{subspace}, hence $\bG\subseteq\bG_D(Z)$ by Lemma \ref{relativization}.\ref{preimage}. Finally, to see that $\bG_D(Z)\subseteq\bG$, pick $C\in\bG_D(Z)$. Observe that $\rho^{-1}[C]\in\bG_D(\omega^\omega)=\bL$ by Lemma \ref{relativization}.\ref{preimage}. This means that $\rho^{-1}[C]\leq B=\rho^{-1}[A]$ in $\omega^\omega$, hence $C\leq A$ in $Z$ by Lemma \ref{bairetoall}. So $C\in [A]=\bG$, which concludes the proof.
\end{proof}

Finally, we can ``put everything together'' and state the full result promised in the introduction to Section 6.

\begin{theorem}\label{hausdorffmain}
Assume $\AD$. Let $Z$ be an uncountable zero-dimensional Polish space. Then
$$
\NSD(Z)=\{\bG_D(Z):D\subseteq\PP(\omega)\}
$$	
\end{theorem}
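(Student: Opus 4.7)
The plan is to observe that this is a direct corollary of the two main results established in Sections 7 and 8, namely Theorem \ref{addison} (the ``easy'' inclusion) and Corollary \ref{hausdorffevery} (the ``hard'' inclusion, which relies on Van Wesep's Theorem \ref{hausdorffbaire}). The work is essentially bookkeeping: I just need to check that the hypotheses of both results apply to our $Z$.

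For the inclusion $\{\bG_D(Z):D\subseteq\PP(\omega)\}\subseteq\NSD(Z)$, I would invoke Theorem \ref{addison}. That theorem requires $Z$ to be zero-dimensional and to contain a copy of $2^\omega$. The first holds by hypothesis. For the second, since $Z$ is an uncountable Polish space, the perfect set theorem (see \cite[Corollary 6.5]{kechris}) yields a non-empty perfect subset of $Z$; being a non-empty perfect Polish space of dimension zero (inheriting zero-dimensionality from $Z$), this perfect set contains a copy of $2^\omega$ by the characterization used in Proposition \ref{locallycompact} (cf.\ \cite[Theorem 7.4]{kechris}). Hence $\bG_D(Z)\in\NSD(Z)$ for every $D\subseteq\PP(\omega)$.

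For the inclusion $\NSD(Z)\subseteq\{\bG_D(Z):D\subseteq\PP(\omega)\}$, I would simply apply Corollary \ref{hausdorffevery}: given any $\bG\in\NSD(Z)$, the hypothesis that $Z$ is a zero-dimensional Polish space is exactly what is needed to produce $D\subseteq\PP(\omega)$ with $\bG=\bG_D(Z)$. (Note that uncountability of $Z$ is not even used for this direction, though it is implicit via the use of Theorem \ref{hausdorffbaire} inside the proof of Corollary \ref{hausdorffevery}.)

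There is no real obstacle here: both directions are already packaged in the results cited above, and the only routine verification is that ``uncountable zero-dimensional Polish'' implies ``contains a copy of $2^\omega$'', which is standard. The theorem serves mainly as a convenient summary statement, combining the two halves into the announced characterization of non-selfdual Wadge classes as precisely those classes induced by a Hausdorff operation.
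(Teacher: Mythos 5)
Your proposal is correct and matches the paper's proof, which likewise obtains the statement immediately by combining Theorem \ref{addison} for one inclusion with Corollary \ref{hausdorffevery} for the other. The extra verification that an uncountable zero-dimensional Polish space contains a copy of $2^\omega$ is a standard fact that the paper leaves implicit, and your treatment of it is fine.
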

\begin{proof}
This follows immediately from Theorem \ref{addison} and Corollary \ref{hausdorffevery}.
\end{proof}

\section{Basic facts on expansions}

The following notion is essentially due to Wadge (see \cite[Chapter IV]{wadget}), and it is inspired by work of Kuratowski. Recall that, given $1\leq\xi<\omega_1$ and spaces $Z$ and $W$, a function $f:Z\longrightarrow W$ is \emph{$\mathbf{\Sigma}^0_\xi$-measurable} if $f^{-1}[U]\in\mathbf{\Sigma}^0_\xi(Z)$ for every $U\in\mathbf{\Sigma}^0_1(W)$.

\begin{definition}
Let $Z$ be a space, and let $\xi<\omega_1$. Given $\bG\subseteq\PP(Z)$, define
$$
\bG^{(\xi)}=\{f^{-1}[A]:A\in\bG\text{ and }f:Z\longrightarrow Z\text{ is $\mathbf{\Sigma}^0_{1+\xi}$-measurable}\}.
$$
We will refer to $\bG^{(\xi)}$ as an \emph{expansion} of $\bG$.
\end{definition}

The following is the corresponding definition in the context of Hausdorff operations. Corollary \ref{movexi} below shows that this is in fact the ``right'' definition.
\begin{definition}\label{hausdorffexpansion}
Let $Z$ be a space, let $D\subseteq\PP(\omega)$, and let $\xi<\omega_1$. Define
$$
\bG_D^{(\xi)}(Z)=\{\HH_D(A_0,A_1,\ldots):A_n\in\mathbf{\Sigma}^0_{1+\xi}(Z)\text{ for every }n\in\omega\}.
$$
\end{definition}

As an example (that will be useful later), consider the following simple observation.

\begin{proposition}\label{expansiondifferences}
Let $1\leq\eta<\omega_1$. Then there exists $D\subseteq\PP(\omega)$ such that $\bG_D^{(\xi)}(Z)=\mathsf{D}_\eta(\mathbf{\Sigma}^0_{1+\xi}(Z))$ for every space $Z$ and every $\xi<\omega_1$.
\end{proposition}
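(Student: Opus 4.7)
The plan is to argue that the \emph{same} $D\subseteq\PP(\omega)$ produced in the proof of Proposition \ref{hausdorffdifferences} also witnesses the present statement, because the construction of $D$ there encodes only the combinatorial shape of the $\mathsf{D}_\eta$ operation and is completely indifferent to what sort of sets are being plugged in.

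First I would recall how that $D$ is built. By definition, $\mathsf{D}_\eta(A_\mu:\mu<\eta)$ is a countable combination (since $\eta<\omega_1$) of unions, intersections, and complements of the $A_\mu$. When $\eta\leq\omega$ the index set is already a subset of $\omega$; when $\omega\leq\eta<\omega_1$ one fixes a bijection $\pi:\omega\longrightarrow\eta$ and rewrites the operation in terms of the reindexed sequence $(A_{\pi(n)})_{n\in\omega}$. Iterated applications of Propositions \ref{hausdorffsettheoretic} and \ref{hausdorffcomposition} then supply a single $D\subseteq\PP(\omega)$ such that
$$\HH_D(B_0,B_1,\ldots)=\mathsf{D}_\eta(B_{\pi^{-1}(\mu)}:\mu<\eta)$$
holds in every ambient set $Z$ and for every choice of $B_0,B_1,\ldots\subseteq Z$ (padding with $\varnothing$ if $\eta<\omega$).

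The crucial observation is that the displayed identity is purely set-theoretic: no topology or pointclass has been mentioned. Consequently, fixing any space $Z$ and any $\xi<\omega_1$ and restricting the inputs to $\mathbf{\Sigma}^0_{1+\xi}(Z)$, the same identity sets up a correspondence (via $\pi$) between $\omega$-sequences in $\mathbf{\Sigma}^0_{1+\xi}(Z)$ used to form $\HH_D(B_0,B_1,\ldots)$ and $\eta$-sequences in $\mathbf{\Sigma}^0_{1+\xi}(Z)$ used to form $\mathsf{D}_\eta(A_\mu:\mu<\eta)$. Comparing this with Definition \ref{hausdorffexpansion} yields $\bG_D^{(\xi)}(Z)=\mathsf{D}_\eta(\mathbf{\Sigma}^0_{1+\xi}(Z))$, as required.

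I do not anticipate a real obstacle: the statement is morally Proposition \ref{hausdorffdifferences} with $\mathbf{\Sigma}^0_1$ replaced by $\mathbf{\Sigma}^0_{1+\xi}$, and the replacement is innocuous because the witnessing $D$ is extracted from the $\mathsf{D}_\eta$ operation without ever referring to the complexity of its arguments. The only mild care needed is in the case $\eta>\omega$, where the reindexing via $\pi$ must be done once and for all so that the resulting $D$ does not depend on $\xi$ or $Z$.
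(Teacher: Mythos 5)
Your proposal is correct and matches the paper's argument: the paper's proof is precisely the remark that the $D$ from Proposition \ref{hausdorffdifferences} works verbatim, since the identity $\HH_D(B_0,B_1,\ldots)=\mathsf{D}_\eta(B_{\pi^{-1}(\mu)}:\mu<\eta)$ obtained from Propositions \ref{hausdorffsettheoretic} and \ref{hausdorffcomposition} is purely set-theoretic and hence insensitive to whether the inputs range over $\mathbf{\Sigma}^0_1(Z)$ or $\mathbf{\Sigma}^0_{1+\xi}(Z)$. Your elaboration of the reindexing for $\eta>\omega$ and the padding for finite $\eta$ is exactly the intended reading.
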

\begin{proof}
This is proved like Proposition \ref{hausdorffdifferences} (in fact, the same $D$ will work).
\end{proof}

The following proposition shows that Definition \ref{hausdorffexpansion} actually fits in the context provided by Section 6.

\begin{proposition}\label{movexidown}
Let $D\subseteq\PP(\omega)$, and let $\xi<\omega_1$. Then there exists $E\subseteq\PP(\omega)$ such that $\bG_D^{(\xi)}(Z)=\bG_E(Z)$ for every space $Z$.
\end{proposition}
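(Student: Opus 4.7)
My plan is to show that $\bG_D^{(\xi)}(Z)$ is obtained by ``unfolding'' the $\mathbf{\Sigma}^0_{1+\xi}$ sets into Hausdorff operations over open sets, and then collapsing the resulting two-layer operation into a single Hausdorff operation via Proposition \ref{hausdorffcomposition}.

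First I would invoke Proposition \ref{hausdorffborel} to obtain a fixed $F \subseteq \PP(\omega)$ such that $\bG_F(Z) = \mathbf{\Sigma}^0_{1+\xi}(Z)$ for every space $Z$ (the case $\xi = 0$ is trivial: take $F = S_0$, yielding the identity projection $\HH_{S_0}(A_0, A_1, \ldots) = A_0$). The crucial point is that $F$ does not depend on $Z$.

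Next, I would apply Proposition \ref{hausdorffcomposition} with the outer set $D$ and with $E_m = F$ for every $m \in \omega$. This produces a set $E \subseteq \PP(\omega)$ such that, for every space $Z$ and every sequence $(A_i)_{i \in \omega}$ of subsets of $Z$,
\[
\HH_D(B_0, B_1, \ldots) = \HH_E(A_0, A_1, \ldots), \qquad \text{where } B_m = \HH_F(A_{\langle m,0\rangle}, A_{\langle m,1\rangle}, \ldots).
\]
I claim this $E$ witnesses the proposition. For the inclusion $\bG_E(Z) \subseteq \bG_D^{(\xi)}(Z)$: if $A_0, A_1, \ldots \in \mathbf{\Sigma}^0_1(Z)$, then each $B_m$ belongs to $\bG_F(Z) = \mathbf{\Sigma}^0_{1+\xi}(Z)$, so the identity above exhibits $\HH_E(A_0, A_1, \ldots)$ as an element of $\bG_D^{(\xi)}(Z)$. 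For the reverse inclusion: given any $\HH_D(B_0, B_1, \ldots) \in \bG_D^{(\xi)}(Z)$ with $B_m \in \mathbf{\Sigma}^0_{1+\xi}(Z) = \bG_F(Z)$, choose open sets $A_{m,n}$ with $B_m = \HH_F(A_{m,0}, A_{m,1}, \ldots)$ and set $A_{\langle m, n \rangle} = A_{m,n}$; then the same identity gives $\HH_D(B_0, B_1, \ldots) = \HH_E(A_0, A_1, \ldots) \in \bG_E(Z)$.

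No step here is a genuine obstacle; the proof is essentially a bookkeeping exercise once Propositions \ref{hausdorffborel} and \ref{hausdorffcomposition} are in hand. The only mild subtlety is ensuring that the $F$ from Proposition \ref{hausdorffborel} and the resulting $E$ from Proposition \ref{hausdorffcomposition} are both independent of the ambient space $Z$, which is automatic from the way those propositions are formulated: the defining condition ``$z \in E$ iff $\{m : \{n : \langle m,n \rangle \in z\} \in F\} \in D$'' depends only on $D$ and $F$, not on $Z$.
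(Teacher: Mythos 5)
Your proof is correct and is exactly the argument the paper intends: Proposition \ref{hausdorffborel} supplies a $Z$-independent $F$ with $\bG_F(Z)=\mathbf{\Sigma}^0_{1+\xi}(Z)$, and Proposition \ref{hausdorffcomposition} collapses the two-layer operation into a single $E$, with both inclusions checked just as you describe. The paper's proof is a one-line citation of these same two propositions, so your write-up simply fills in the routine details.
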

\begin{proof}
This is proved by combining Propositions \ref{hausdorffborel} and \ref{hausdorffcomposition}.
\end{proof}
\begin{corollary}\label{expansionhausdorffnonselfdual}
Let $Z$ be an uncountable zero-dimensional Polish space, let $D\subseteq\PP(\omega)$, and let $\xi<\omega_1$. Then $\bG_D^{(\xi)}(Z)\in\NSD(Z)$.
\end{corollary}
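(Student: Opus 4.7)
The plan is to reduce this corollary directly to Theorem \ref{addison} via Proposition \ref{movexidown}. First, I would invoke Proposition \ref{movexidown} to obtain some $E\subseteq\PP(\omega)$ such that $\bG_D^{(\xi)}(Z)=\bG_E(Z)$. This is the essential content of the statement that Definition \ref{hausdorffexpansion} ``fits in the context of Section 6'': expansions, defined by replacing open generators with $\mathbf{\Sigma}^0_{1+\xi}$ generators, are themselves classes of the form $\bG_E$ for some Hausdorff operation $E$ (intuitively, by absorbing the $\mathbf{\Sigma}^0_{1+\xi}$ complexity into the Hausdorff operation).

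Next, I would note that since $Z$ is an uncountable Polish space, the classical perfect set theorem for Polish spaces (\cite[Theorem 6.2 or Corollary 6.5]{kechris}) guarantees that $Z$ contains a copy of $2^\omega$. Combined with the hypothesis that $Z$ is zero-dimensional, this puts $Z$ in the exact setting required by Theorem \ref{addison}.

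Applying Theorem \ref{addison} to $\bG_E(Z)$ then yields $\bG_E(Z)\in\NSD(Z)$, which by the identification from the first step gives $\bG_D^{(\xi)}(Z)\in\NSD(Z)$, as desired. There is no real obstacle here; the entire content of the corollary consists in noting that the machinery of Section 6 (specifically Theorem \ref{addison}) transfers to the expanded classes via Proposition \ref{movexidown}, which in turn relies only on the closure of Hausdorff operations under composition (Proposition \ref{hausdorffcomposition}) together with the fact that each $\mathbf{\Sigma}^0_{1+\xi}(Z)$ is itself of the form $\bG_F(Z)$ for a suitable $F$ (Proposition \ref{hausdorffborel}).
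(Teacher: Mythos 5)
Your proposal is correct and matches the paper's proof exactly: the paper also obtains the corollary by combining Proposition \ref{movexidown} with Theorem \ref{addison}, the hypothesis that $Z$ is uncountable (hence contains a copy of $2^\omega$) being precisely what makes Theorem \ref{addison} applicable.
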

\begin{proof}
This is proved by combining Proposition \ref{movexidown} and Theorem \ref{addison}.
\end{proof}

The following useful result is the analogue of Lemma \ref{relativization} in the present context.

\begin{lemma}\label{expansionrelativization}
Let $Z$ and $W$ be spaces, let $D\subseteq\PP(\omega)$, and let $\xi<\omega_1$.
\begin{enumerate}
\item\label{expansionsubspace} Assume that $W\subseteq Z$. Then $B\in\bG_D^{(\xi)}(W)$ iff there exists $A\in\bG_D^{(\xi)}(Z)$ such that $B=A\cap W$.
\item\label{expansionpreimage} If $f:Z\longrightarrow W$ is continuous and $B\in\bG_D^{(\xi)}(W)$ then $f^{-1}[B]\in\bG_D^{(\xi)}(Z)$.
\item\label{expansionmeasurablepreimage} If $f:Z\longrightarrow W$ is $\mathbf{\Sigma}^0_{1+\xi}$-measurable and $B\in\bG_D(W)$ then $f^{-1}[B]\in\bG_D^{(\xi)}(Z)$.
\item\label{expansionhomeomorphism} If $h:Z\longrightarrow W$ is a homeomorphism then $A\in\bG_D^{(\xi)}(Z)$ iff $h[A]\in\bG_D^{(\xi)}(W)$.
\end{enumerate}
\end{lemma}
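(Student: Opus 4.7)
Here is my plan for proving Lemma \ref{expansionrelativization}. The strategy is to handle parts (\ref{expansionsubspace}), (\ref{expansionpreimage}), and (\ref{expansionhomeomorphism}) uniformly by reducing them to the already-established Lemma \ref{relativization}, and to prove part (\ref{expansionmeasurablepreimage}) directly from the set-theoretic identities of Proposition \ref{relativizationsettheoretic}.

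First I would apply Proposition \ref{movexidown} to fix some $E\subseteq\PP(\omega)$ such that $\bG_D^{(\xi)}(Y)=\bG_E(Y)$ for every space $Y$. Crucially, the choice of $E$ is uniform in the ambient space, so both $\bG_D^{(\xi)}(Z)$ and $\bG_D^{(\xi)}(W)$ are simultaneously replaced by $\bG_E(Z)$ and $\bG_E(W)$. Then part (\ref{expansionsubspace}) is just Lemma \ref{relativization}.\ref{subspace} for $\bG_E$, part (\ref{expansionpreimage}) is just Lemma \ref{relativization}.\ref{preimage} for $\bG_E$, and part (\ref{expansionhomeomorphism}) is just Lemma \ref{relativization}.\ref{homeomorphism} for $\bG_E$. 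No new work is required.

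The only part that requires genuine argument is (\ref{expansionmeasurablepreimage}), since it mixes an operation at level $\xi=0$ on $W$ with an operation at level $\xi$ on $Z$, and this mixing is not visible after one has collapsed the expansion via Proposition \ref{movexidown}. Here I would argue directly from the definition. Given $B\in\bG_D(W)$, write $B=\HH_D(B_0,B_1,\ldots)$ with each $B_n\in\mathbf{\Sigma}^0_1(W)$. By Proposition \ref{relativizationsettheoretic}.\ref{preimagesettheoretic},
\[
f^{-1}[B]=f^{-1}[\HH_D(B_0,B_1,\ldots)]=\HH_D(f^{-1}[B_0],f^{-1}[B_1],\ldots).
\]
Since $f$ is $\mathbf{\Sigma}^0_{1+\xi}$-measurable, each $f^{-1}[B_n]$ lies in $\mathbf{\Sigma}^0_{1+\xi}(Z)$, so the displayed identity exhibits $f^{-1}[B]$ as a member of $\bG_D^{(\xi)}(Z)$ in accordance with Definition \ref{hausdorffexpansion}.

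I do not expect any real obstacle. The mildly delicate point is keeping straight which class lives over which space in part (\ref{expansionmeasurablepreimage}): $B$ is built from open sets in $W$ (unexpanded), while $f^{-1}[B]$ is built from $\mathbf{\Sigma}^0_{1+\xi}$ sets in $Z$ (expanded). This is exactly what the Hausdorff-operation formalism makes transparent, since $\HH_D$ commutes with preimages irrespective of the topological complexity of its inputs.
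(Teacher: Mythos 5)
Your proposal is correct, but it packages the argument differently from the paper. The paper proves all four clauses at once as a direct consequence of Proposition \ref{relativizationsettheoretic}: exactly the computation you carry out for clause (\ref{expansionmeasurablepreimage}), applied in each case to the witnessing sequence of $\mathbf{\Sigma}^0_{1+\xi}$ (rather than open) sets, together with the standard facts that the class $\mathbf{\Sigma}^0_{1+\xi}$ relativizes to subspaces and is preserved under continuous preimages and homeomorphic images. You instead dispose of clauses (\ref{expansionsubspace}), (\ref{expansionpreimage}), and (\ref{expansionhomeomorphism}) by collapsing the expansion to an unexpanded class $\bG_E$ via Proposition \ref{movexidown} (legitimately, since the $E$ there is uniform in the ambient space and Proposition \ref{movexidown} precedes this lemma, so there is no circularity) and then quoting Lemma \ref{relativization}; only clause (\ref{expansionmeasurablepreimage}), which genuinely mixes the unexpanded class on $W$ with the expanded class on $Z$, is argued directly, and your direct argument is exactly the paper's. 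The trade-off is minor: your route spares you from repeating the relativization argument at level $1+\xi$, at the cost of invoking the composition machinery behind Proposition \ref{movexidown}, while the paper's route is uniform across the four clauses and stays at the level of the set-theoretic identities. Either way the lemma is established.
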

\begin{proof}
This is a straightforward consequence of Proposition \ref{relativizationsettheoretic}.
\end{proof}

\section{Kuratowski's transfer theorem}

The aim of this section is to collect the tools needed to successfully employ the notion of expansion. For example, Corollary \ref{expansionbijection} will be a crucial ingredient in the proof of Theorem \ref{expansiontheorem}. A stronger form of Theorem \ref{sigmaintoopen} appears as \cite[Theorem 7.1.6]{louveaub}, where it is called ``Kuratowski's transfer theorem''.

\begin{theorem}[Kuratowski]\label{sigmaintoopen}
Let $(Z,\tau)$ be a Polish space, let $1<\xi<\omega_1$, and let $\Aa\subseteq\mathbf{\Sigma}^0_\xi(Z,\tau)$ be countable. Then there exists a zero-dimensional Polish topology $\sigma$ on the set $Z$ such that $\tau\subseteq\sigma\subseteq\mathbf{\Sigma}^0_\xi(Z,\tau)$ and $\Aa\subseteq\sigma$.
\end{theorem}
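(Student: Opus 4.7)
The plan is to induct on $\xi$, with the base case $\xi=2$ handled directly and the inductive step reducing to applications of the base case at a lower level. The two classical facts I will rely on throughout are: (a) if $(X,\tau)$ is Polish and $F\subseteq X$ is $\tau$-closed, then the topology $\tau_F$ generated by $\tau\cup\{F\}$ is Polish (since $(X,\tau_F)$ is homeomorphic to the disjoint union $F\sqcup(X\setminus F)$, each summand Polish), and (b) the supremum of countably many Polish topologies on $Z$ all refining a fixed Hausdorff topology is Polish, via the diagonal embedding into $\prod_n(Z,\tau_n)$ whose image is closed thanks to the common Hausdorff coarsening.

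For the base case $\xi=2$, write each $A\in\Aa$ as $A=\bigcup_n F_n^A$ with $F_n^A\in\mathbf{\Pi}^0_1(Z,\tau)$. First I would pass to a zero-dimensional Polish refinement $\tau_0\supseteq\tau$ with $\tau_0\subseteq\mathbf{\Sigma}^0_2(Z,\tau)$: one obtains $\tau_0$ by iterating (a) on countably many $\tau$-closed balls whose diameters tend to zero and then applying (b). The bound $\tau_0\subseteq\mathbf{\Sigma}^0_2(Z,\tau)$ follows because each basic $\tau_0$-open set is the intersection of a $\tau$-open set with finitely many $\tau$-closed balls. Next, enlarge $\tau_0$ to the topology $\sigma$ generated by adding each $F_n^A$; the $F_n^A$ are $\tau_0$-closed, so the same argument shows $\sigma$ is Polish. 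Each $F_n^A$ is $\sigma$-clopen (open by construction, closed because already $\tau_0$-closed), so intersecting a clopen basis of $\tau_0$ with finitely many $F_n^A$'s yields a clopen basis of $\sigma$, making $\sigma$ zero-dimensional. Finally, a basic $\sigma$-open is the intersection of a $\mathbf{\Sigma}^0_2(\tau)$ set with a $\mathbf{\Pi}^0_1(\tau)$ set, hence itself in $\mathbf{\Sigma}^0_2(\tau)$; closure of $\mathbf{\Sigma}^0_2(\tau)$ under countable unions then gives $\sigma\subseteq\mathbf{\Sigma}^0_2(Z,\tau)$, while each $A=\bigcup_n F_n^A$ is visibly $\sigma$-open.

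For the inductive step $\xi\geq 3$, write each $A\in\Aa$ as $A=\bigcup_n B_n^A$ with $B_n^A\in\mathbf{\Pi}^0_{\xi_n^A}(Z,\tau)$ for some $\xi_n^A<\xi$, and set $\zeta_n^A=\max(2,\xi_n^A)<\xi$. Applying the induction hypothesis at level $\zeta_n^A$ to the singleton $\{Z\setminus B_n^A\}\subseteq\mathbf{\Sigma}^0_{\zeta_n^A}(Z,\tau)$ produces a zero-dimensional Polish $\tau_n^A\supseteq\tau$ with $\tau_n^A\subseteq\mathbf{\Sigma}^0_{\zeta_n^A}(Z,\tau)$ in which $B_n^A$ is closed. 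The countable supremum $\tau^*$ of these topologies, taken via (b), is zero-dimensional Polish (intersecting countably many clopen bases still produces a countable clopen basis), contained in $\mathbf{\Sigma}^0_\xi(Z,\tau)$ by closure of $\mathbf{\Sigma}^0_\xi$ under finite intersections and countable unions, and still has every $B_n^A$ closed. A final application of the base-case construction inside $(Z,\tau^*)$ to the countable family of $\tau^*$-closed sets $\{B_n^A\}$ yields a zero-dimensional Polish $\sigma\supseteq\tau^*$ in which each $B_n^A$ is clopen, hence each $A$ is $\sigma$-open; the same bookkeeping as in the base case, together with the observation that the new generators already lie in $\mathbf{\Sigma}^0_\xi(\tau)$, bounds $\sigma\subseteq\mathbf{\Sigma}^0_\xi(Z,\tau)$.

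The main obstacle is keeping the complexity bound tight. A naive iteration of adding $\tau'$-closed sets as open to a topology $\tau'\subseteq\mathbf{\Sigma}^0_\eta(\tau)$ only guarantees that the outcome lies in $\mathbf{\Sigma}^0_{\eta+1}(\tau)$, so treating the base case as a black box that is applied on top of the inductive hypothesis would overshoot $\mathbf{\Sigma}^0_\xi(\tau)$. The fix is to apply the induction hypothesis to each generator $B_n^A$ individually at its own level $\zeta_n^A<\xi$, so that $\tau^*$ already sits inside $\mathbf{\Sigma}^0_\xi(\tau)$ before the final base-case step; adding the already-$\tau^*$-closed generators then does not increase the complexity beyond $\mathbf{\Sigma}^0_\xi(\tau)$.
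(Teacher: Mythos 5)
Your proof is correct. It differs from the paper only in packaging: the paper disposes of this theorem in one line, by citing \cite[Exercise 22.20]{kechris} (which makes countably many $\mathbf{\Delta}^0_\xi$ sets clopen in a finer zero-dimensional Polish topology contained in $\mathbf{\Sigma}^0_\xi$) together with the observation that every $\mathbf{\Sigma}^0_\xi$ set is a countable union of $\mathbf{\Delta}^0_\xi$ sets, whereas you reprove the transfer theorem from scratch by transfinite induction on $\xi$, decomposing each $A$ into $\mathbf{\Pi}^0_\eta$ pieces with $\eta<\xi$ and applying the induction hypothesis to each piece at its own level before one last round of adjoining closed sets. This is essentially the standard proof hiding behind the cited exercise, and your key bookkeeping point --- that the generators added at the last stage already lie in $\mathbf{\Delta}^0_\xi(\tau)$, so the complexity does not climb to $\mathbf{\Sigma}^0_{\xi+1}$ --- is exactly what makes the bound $\sigma\subseteq\mathbf{\Sigma}^0_\xi(Z,\tau)$ tight. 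Two small polish points: your zero-dimensionalization step via small closed balls works, but it is cleaner (and closer to the textbook argument) to adjoin the complements of a countable basis of $\tau$, so that the generated topology visibly has a countable clopen basis; and in fact (b) you should note explicitly that the diagonal is closed because all the topologies refine the single metrizable (hence Hausdorff) topology $\tau$, which you do gesture at and which is the only place that hypothesis is used.
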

\begin{proof}
This follows from \cite[Exercise 22.20]{kechris}, using the fact that every element of $\mathbf{\Sigma}^0_\xi(Z,\tau)$ can be written as a countable union of elements of $\bD^0_\xi(Z,\tau)$.
\end{proof}

\begin{corollary}\label{sigmaintoopenbijection}
Let $Z$ be a zero-dimensional Polish space, let $1\leq\xi<\omega_1$, and let $\Aa\subseteq\mathbf{\Sigma}^0_\xi(Z)$ be countable. Then there exists a zero-dimensional Polish space $W$ and a $\mathbf{\Sigma}^0_\xi$-measurable bijection $f:Z\longrightarrow W$ such that $f[A]\in\mathbf{\Sigma}^0_1(W)$ for every $A\in\Aa$.
\end{corollary}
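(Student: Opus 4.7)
The plan is to deduce this essentially immediately from Theorem \ref{sigmaintoopen}, by taking $W$ to be the set $Z$ endowed with the finer topology produced there, and by letting $f$ be the identity map.

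More precisely, consider first the case $\xi = 1$. Here there is nothing to do: set $W = Z$ and $f = \id_Z$, which is trivially continuous (hence $\mathbf{\Sigma}^0_1$-measurable, hence $\mathbf{\Sigma}^0_\xi$-measurable) and satisfies $f[A] = A \in \mathbf{\Sigma}^0_1(W)$ for every $A \in \Aa$.

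Now assume $1 < \xi < \omega_1$, and let $\tau$ denote the original topology on $Z$. Apply Theorem \ref{sigmaintoopen} to the countable family $\Aa \subseteq \mathbf{\Sigma}^0_\xi(Z,\tau)$ to obtain a zero-dimensional Polish topology $\sigma$ on the set $Z$ with $\tau \subseteq \sigma \subseteq \mathbf{\Sigma}^0_\xi(Z,\tau)$ and $\Aa \subseteq \sigma$. Set $W = (Z,\sigma)$ and let $f : (Z,\tau) \longrightarrow (Z,\sigma)$ be the identity bijection. Since $\mathbf{\Sigma}^0_1(W) = \sigma \subseteq \mathbf{\Sigma}^0_\xi(Z,\tau)$, the map $f$ is $\mathbf{\Sigma}^0_\xi$-measurable. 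Moreover, for each $A \in \Aa$ one has $f[A] = A \in \sigma = \mathbf{\Sigma}^0_1(W)$, as required.

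I do not anticipate any real obstacle: the argument is essentially just a repackaging of Theorem \ref{sigmaintoopen} in the language of measurable bijections rather than topology refinements. The only minor point to keep in mind is that Theorem \ref{sigmaintoopen} is stated for $1 < \xi < \omega_1$, so the case $\xi = 1$ must be handled separately (trivially, as above).
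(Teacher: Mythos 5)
Your proposal is correct and is essentially identical to the paper's own proof: the paper also handles $\xi=1$ trivially and, for $\xi>1$, takes $W$ to be the set $Z$ with the finer topology given by Theorem \ref{sigmaintoopen} and $f=\id_Z$. You have merely spelled out the routine verifications that the paper leaves implicit.
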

\begin{proof}
The case $\xi=1$ is trivial, so assume that $\xi>1$. The space $W$ is simply the set $Z$ with the finer topology given by Theorem \ref{sigmaintoopen}, while $f=\id_Z$.
\end{proof}

\begin{corollary}\label{expansionbijection}
Let $Z$ be a zero-dimensional Polish space, let $D\subseteq\PP(\omega)$, and let $\xi<\omega_1$. Assume that $\Aa\subseteq\bG_D^{(\xi)}(Z)$ and $\BB\subseteq\mathbf{\Sigma}^0_{1+\xi}(Z)$ are countable. Then there exists a zero-dimensional Polish space $W$ and a $\mathbf{\Sigma}^0_{1+\xi}$-measurable bijection $f:Z\longrightarrow W$ such that $f[A]\in\bG_D(W)$ for every $A\in\Aa$ and $f[B]\in\mathbf{\Sigma}^0_1(W)$ for every $B\in\BB$.
\end{corollary}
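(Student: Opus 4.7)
The plan is to reduce the statement to the previous corollary by unpacking each $A \in \Aa$ into its constituent $\mathbf{\Sigma}^0_{1+\xi}$-ingredients, refining the topology so that all those ingredients become open simultaneously, and then reassembling the Hausdorff operations in the new space.

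First, for each $A \in \Aa$, by the definition of $\bG_D^{(\xi)}(Z)$ I would fix a sequence $(A^A_n : n \in \omega)$ of elements of $\mathbf{\Sigma}^0_{1+\xi}(Z)$ such that $A = \HH_D^Z(A^A_0, A^A_1, \ldots)$. Then I would form the countable family
\[
\CC = \BB \cup \{A^A_n : A \in \Aa,\ n \in \omega\} \subseteq \mathbf{\Sigma}^0_{1+\xi}(Z).
\]
This is a countable subset of $\mathbf{\Sigma}^0_{1+\xi}(Z)$, so I can apply Corollary \ref{sigmaintoopenbijection} with parameter $1+\xi$ in place of $\xi$ to obtain a zero-dimensional Polish space $W$ together with a $\mathbf{\Sigma}^0_{1+\xi}$-measurable bijection $f : Z \longrightarrow W$ such that $f[C] \in \mathbf{\Sigma}^0_1(W)$ for every $C \in \CC$.

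This $f$ already handles $\BB$ by construction. For $A \in \Aa$, I would invoke Proposition \ref{relativizationsettheoretic}.\ref{homeomorphismsettheoretic} (bijections commute with Hausdorff operations) to get
\[
f[A] = f[\HH_D^Z(A^A_0, A^A_1, \ldots)] = \HH_D^W(f[A^A_0], f[A^A_1], \ldots).
\]
Since each $f[A^A_n] \in \mathbf{\Sigma}^0_1(W)$, this displays $f[A]$ as an element of $\bG_D(W)$, as desired.

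There is no real obstacle here: the entire content of the argument is the observation that writing $\Aa$ in its Hausdorff-operation form reduces the problem to opening up a countable family of $\mathbf{\Sigma}^0_{1+\xi}$ sets, which is exactly what Corollary \ref{sigmaintoopenbijection} delivers. The only mild point to note is the index shift ($\mathbf{\Sigma}^0_{1+\xi}$ here corresponds to $\mathbf{\Sigma}^0_\xi$ in the statement of Corollary \ref{sigmaintoopenbijection}), which is harmless since $1+\xi$ is always a countable ordinal at least $1$.
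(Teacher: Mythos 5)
Your proof is correct and follows essentially the same route as the paper's: fix Hausdorff-operation witnesses for each $A\in\Aa$, collect them together with $\BB$ into a countable family of $\mathbf{\Sigma}^0_{1+\xi}$ sets, apply Corollary \ref{sigmaintoopenbijection} (with $1+\xi$ in place of $\xi$), and then use Proposition \ref{relativizationsettheoretic}.\ref{homeomorphismsettheoretic} to see that the images land in $\bG_D(W)$. Your remark about the harmless index shift is also accurate.
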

\begin{proof}
Let $\Aa=\{A_m:m\in\omega\}$. Given $m\in\omega$, fix $A_{m,n}\in\mathbf{\Sigma}^0_{1+\xi}(Z)$ for $n\in\omega$ such that $A_m=\HH_D(A_{m,0},A_{m,1},\ldots)$. Define $\CC=\{A_{m,n}:m,n\in\omega\}\cup\BB$. By Corollary \ref{sigmaintoopenbijection}, we can fix a Polish space $W$ and a $\mathbf{\Sigma}^0_{1+\xi}$-measurable bijection $f:Z\longrightarrow W$ such that $f[C]\in\mathbf{\Sigma}^0_1(W)$ for every $C\in\CC$. It remains to observe that
$$
f[A_m]=f[\HH_D(B_{m,0},B_{m,1},\ldots)]=\HH_D(f[B_{m,0}],f[B_{m,1}],\ldots)\in\bG_D(W)
$$
for every $m\in\omega$, where the second equality follows from Proposition \ref{relativizationsettheoretic}.\ref{homeomorphismsettheoretic}.
\end{proof}

\begin{corollary}\label{movexi}
Let $Z$ be an uncountable zero-dimensional Polish space, let $D\subseteq\PP(\omega)$, and let $\xi<\omega_1$. Then $\bG_D(Z)^{(\xi)}=\bG_D^{(\xi)}(Z)$.
\end{corollary}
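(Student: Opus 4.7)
The plan is to prove the two inclusions separately. The inclusion $\bG_D(Z)^{(\xi)}\supseteq\bG_D^{(\xi)}(Z)$ will be essentially formal, while the reverse inclusion requires Corollary \ref{expansionbijection} together with the observation that every zero-dimensional Polish space embeds as a subspace of $Z$.

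For the inclusion $\bG_D^{(\xi)}(Z)\subseteq\bG_D(Z)^{(\xi)}$, fix $B=\HH_D(A_0,A_1,\ldots)$ with each $A_n\in\mathbf{\Sigma}^0_{1+\xi}(Z)$. First I would apply Corollary \ref{expansionbijection} with $\Aa=\{B\}$ (and $\BB=\varnothing$) to obtain a zero-dimensional Polish space $W$ together with a $\mathbf{\Sigma}^0_{1+\xi}$-measurable bijection $f:Z\longrightarrow W$ such that $f[B]\in\bG_D(W)$. Since $Z$ is uncountable zero-dimensional Polish it contains a copy of $2^\omega$, and $W$ embeds into $2^\omega$ (as every zero-dimensional Polish space does, via \cite[Theorem 7.8]{kechris} or Alexandrov-style arguments), so there is a topological embedding $g:W\longrightarrow Z$. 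By Lemma \ref{relativization}.\ref{homeomorphism} we have $g[f[B]]\in\bG_D(g[W])$, and by Lemma \ref{relativization}.\ref{subspace} there exists $A\in\bG_D(Z)$ such that $A\cap g[W]=g[f[B]]$. The composition $g\circ f:Z\longrightarrow Z$ is $\mathbf{\Sigma}^0_{1+\xi}$-measurable because $g$ is continuous and $f$ is $\mathbf{\Sigma}^0_{1+\xi}$-measurable, and since $f$ and $g$ are bijections onto their images a direct computation gives $(g\circ f)^{-1}[A]=B$, witnessing $B\in\bG_D(Z)^{(\xi)}$.

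For the reverse inclusion $\bG_D(Z)^{(\xi)}\subseteq\bG_D^{(\xi)}(Z)$, pick $B=f^{-1}[A]$ where $A\in\bG_D(Z)$ and $f:Z\longrightarrow Z$ is $\mathbf{\Sigma}^0_{1+\xi}$-measurable. Writing $A=\HH_D(A_0,A_1,\ldots)$ with $A_n\in\mathbf{\Sigma}^0_1(Z)$, Proposition \ref{relativizationsettheoretic}.\ref{preimagesettheoretic} gives
$$
B=f^{-1}[\HH_D(A_0,A_1,\ldots)]=\HH_D(f^{-1}[A_0],f^{-1}[A_1],\ldots),
$$
and each $f^{-1}[A_n]$ lies in $\mathbf{\Sigma}^0_{1+\xi}(Z)$ by the measurability of $f$, so $B\in\bG_D^{(\xi)}(Z)$.

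The main obstacle is really contained in the harder inclusion, and it is the step of transferring the set $f[B]$ from the refined space $W$ back into the original space $Z$. The key insight is that the hypothesis ``$Z$ is uncountable zero-dimensional Polish'' is precisely what makes $Z$ topologically universal among zero-dimensional Polish spaces (since it contains $2^\omega$), allowing the refined topology produced by Kuratowski's transfer theorem to be realized inside $Z$. Once this embedding is in hand, Lemma \ref{relativization} does the remaining bookkeeping automatically.
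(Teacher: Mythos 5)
Your proof is correct and follows essentially the same route as the paper: the easy inclusion is the preimage computation for Hausdorff operations (this is exactly Lemma \ref{expansionrelativization}.\ref{expansionmeasurablepreimage}), and the harder inclusion uses Corollary \ref{expansionbijection} and then transfers $f[B]$ back into $Z$ via Lemma \ref{relativization}. The only cosmetic difference is that you spell out the embedding $g:W\longrightarrow Z$ explicitly, where the paper absorbs it into a ``without loss of generality, $W\subseteq Z$'' step.
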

\begin{proof}
The inclusion $\bG_D(Z)^{(\xi)}\subseteq\bG_D^{(\xi)}(Z)$ follows from Lemma \ref{expansionrelativization}.\ref{expansionmeasurablepreimage}. In order to prove the other inclusion, pick $A\in\bG_D^{(\xi)}(Z)$. By Corollary \ref{expansionbijection}, we can fix a zero-dimensional Polish space $W$ and a $\mathbf{\Sigma}^0_{1+\xi}$-measurable bijection $f:Z\longrightarrow W$ such that $f[A]\in\bG_D(W)$. Since $Z$ contains a copy of $2^\omega$ and $W$ is zero-dimensional, using Lemma \ref{relativization}.\ref{homeomorphism} we can assume without loss of generality that $W$ is a subspace of $Z$, so that $f:Z\longrightarrow Z$. By Lemma \ref{relativization}.\ref{subspace}, we can fix $B\in\bG_D(Z)$ such that $B\cap W=f[A]$. It is easy to check that $A=f^{-1}[B]$, which concludes the proof.
\end{proof}

\begin{corollary}\label{expansionnonselfdual}
Assume $\AD$. Let $Z$ be an uncountable zero-dimensional Polish space, and let $\xi<\omega_1$. Then $\bG^{(\xi)}\in\NSD(Z)$ for every $\bG\in\NSD(Z)$.
\end{corollary}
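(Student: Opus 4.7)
The plan is to assemble the corollary from three earlier results already in the toolbox, with the Axiom of Determinacy entering only through the first step. The overall strategy is to translate a non-selfdual Wadge class into the Hausdorff-operation formalism, perform the expansion there (where it is immediate), and then observe that the result remains a non-selfdual Wadge class.

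First, I would fix $\bG\in\NSD(Z)$ and apply Corollary \ref{hausdorffevery} to obtain $D\subseteq\PP(\omega)$ such that $\bG=\bG_D(Z)$. This is the only step that actually uses $\AD$: it rests on Van Wesep's Theorem \ref{hausdorffbaire} transferred from $\omega^\omega$ to $Z$ by the retraction argument, and without it there is no guarantee that every non-selfdual Wadge class in $Z$ arises from a Hausdorff operation.

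Second, I would invoke Corollary \ref{movexi}, which gives the key identity
\[
\bG^{(\xi)}=\bG_D(Z)^{(\xi)}=\bG_D^{(\xi)}(Z).
\]
This reduces the statement about the abstract expansion $\bG^{(\xi)}$ to a statement about the concrete family $\bG_D^{(\xi)}(Z)$, where the descriptive complexity of the input sets has been pushed up from $\mathbf{\Sigma}^0_1$ to $\mathbf{\Sigma}^0_{1+\xi}$ but the outer Hausdorff operation is unchanged.

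Finally, I would apply Corollary \ref{expansionhausdorffnonselfdual} (which was obtained by combining Proposition \ref{movexidown} with Addison's Theorem \ref{addison}) to conclude that $\bG_D^{(\xi)}(Z)\in\NSD(Z)$, and hence $\bG^{(\xi)}\in\NSD(Z)$. There is no genuine obstacle left at this point: the only non-trivial content is hidden in Corollary \ref{hausdorffevery}, whose proof required $\AD$ together with the retraction trick from Lemma \ref{bairetoall}. Everything else is a formal manipulation of the Hausdorff-operation calculus developed in Sections 5 through 10.
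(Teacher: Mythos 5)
Your proposal is correct and is precisely the paper's own argument: it cites Corollary \ref{hausdorffevery}, Corollary \ref{movexi}, and Corollary \ref{expansionhausdorffnonselfdual} in exactly the same roles. Nothing further is needed.
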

\begin{proof}
This follows from Corollary \ref{hausdorffevery}, Corollary \ref{movexi}, and Proposition \ref{expansionhausdorffnonselfdual}.
\end{proof}

\begin{corollary}\label{expansionorder}
Assume $\AD$. Let $Z$ be an uncountable zero-dimensional Polish space, and let $\xi<\omega_1$. Then $\bG\subseteq\bL$ iff $\bG^{(\xi)}\subseteq\bL^{(\xi)}$ for every $\bG,\bL\in\NSD(Z)$.
\end{corollary}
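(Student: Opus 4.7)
The forward direction is immediate: if $A \in \bG^{(\xi)}$ has a witness decomposition $A = f^{-1}[B]$ with $B \in \bG$ and $f: Z \longrightarrow Z$ being $\mathbf{\Sigma}^0_{1+\xi}$-measurable, and if $\bG \subseteq \bL$, then $B \in \bL$ and so $A \in \bL^{(\xi)}$ directly from the definition. The real content is in the reverse direction, which I will prove by contrapositive.

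Assume $\bG \not\subseteq \bL$, and write $\bG = [A]$ and $\bL = [B]$ per Definition \ref{wadgeclassdefinition}. Since $A \in \bG \setminus \bL$ we have $A \not\leq B$, so Lemma \ref{wadgelemma} yields $Z \setminus B \leq A$. A short check (a continuous function witnesses $C \leq B$ iff it witnesses $Z \setminus C \leq Z \setminus B$) shows $[Z \setminus B] = \bLc$, which lifts the pointwise Wadge dichotomy to the class-level statement $\bLc \subseteq \bG$. Applying the already-established forward direction to this inclusion gives $(\bLc)^{(\xi)} \subseteq \bG^{(\xi)}$.

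The remaining ingredient is the identity $(\bLc)^{(\xi)} = (\bL^{(\xi)})^{\check{}}$, which follows immediately from $f^{-1}[Z \setminus C] = Z \setminus f^{-1}[C]$ applied to the definition of expansion: a set belongs to $(\bLc)^{(\xi)}$ iff it has the form $f^{-1}[Z \setminus C] = Z \setminus f^{-1}[C]$ for some $C \in \bL$ and some $\mathbf{\Sigma}^0_{1+\xi}$-measurable $f$, which is exactly the complement of a generic element of $\bL^{(\xi)}$. Assuming now, for contradiction, that $\bG^{(\xi)} \subseteq \bL^{(\xi)}$, we chain the inclusions to obtain $(\bL^{(\xi)})^{\check{}} \subseteq \bL^{(\xi)}$; taking complements of both sides shows $\bL^{(\xi)} \subseteq (\bL^{(\xi)})^{\check{}}$ as well, so $\bL^{(\xi)}$ is selfdual. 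This contradicts Corollary \ref{expansionnonselfdual}, which guarantees $\bL^{(\xi)} \in \NSD(Z)$, and completes the proof. The one genuinely non-routine step is the upgrade of Wadge's lemma from sets to classes, but this is essentially forced by the fact that non-selfdual Wadge classes are principal (by Definition \ref{wadgeclassdefinition}); the rest is bookkeeping and invoking Corollary \ref{expansionnonselfdual}.
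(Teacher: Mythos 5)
Your proof is correct and follows essentially the same route as the paper: the forward direction from the definition of expansion, then Lemma \ref{wadgelemma} to get $\bLc\subseteq\bG$, the identity $\bLc^{(\xi)}=\widecheck{\bL^{(\xi)}}$, and a contradiction with the non-selfduality of $\bL^{(\xi)}$ from Corollary \ref{expansionnonselfdual}. The extra details you supply (principal generators, the complement identity) are exactly what the paper leaves implicit.
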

\begin{proof}
The fact that $\bG\subseteq\bL$ implies $\bG^{(\xi)}\subseteq\bL^{(\xi)}$ is a trivial consequence of the definition of expansion. Now fix $\bG,\bL\in\NSD(Z)$ such that $\bG^{(\xi)}\subseteq\bL^{(\xi)}$. Assume, in order to get a contradiction, that $\bG\nsubseteq\bL$. Then $\bLc\subseteq\bG$ by Lemma \ref{wadgelemma}, hence
$$
\widecheck{\bL^{(\xi)}}=\bLc^{(\xi)}\subseteq\bG^{(\xi)}\subseteq\bL^{(\xi)}.
$$
Since $\bL^{(\xi)}$ is non-selfdual by Corollary \ref{expansionnonselfdual}, this is a contradiction.
\end{proof}

\section{The expansion theorem}

The main result of this section is Theorem \ref{expansiontheorem}, which will be a crucial tool in obtaining the closure properties in the next section, and will be referred to as the expansion theorem. The proof given here is essentially the same as \cite[proof of Theorem 7.3.9.ii]{louveaub}. This result can be traced back to \cite[Th\'eor\`{e}me 8]{louveausaintraymondf}, which is however limited to the Borel context. We need to introduce the following two notions. The first is \cite[Definition D1]{wadget}, while the second is taken from \cite{louveausaintraymondf} (see also \cite[Section 7.3.4]{louveaub}).\footnote{\,In \cite{louveausaintraymondf}, the notation $\mathbf{\Delta}_{1+\xi}^0\text{-}\PU$ is used instead of $\PU_\xi$, and $\lambda_\mathsf{C}$ is used instead of $\ell$.}

\begin{definition}[Wadge]
Let $Z$ be a space, let $\bG\subseteq\PP(Z)$, and let $\xi<\omega_1$. Define $\PU_\xi(\bG)$ to be the collection of all sets of the form
$$
\bigcup_{n\in\omega}(A_n\cap V_n),
$$
where each $A_n\in\bG$, each $V_n\in\mathbf{\Delta}_{1+\xi}^0(Z)$, the $V_n$ are pairwise disjoint, and $\bigcup_{n\in\omega}V_n=Z$. A set in this form is called a \emph{partitioned union} of sets in $\bG$.
\end{definition}

Notice that the sets $V_n$ in the above definition are not required to be non-empty. It is easy to check that $\PU_\xi(\bG)$ is continuously closed whenever $\bG$ is. Furthermore, it is clear that
$$
\bG\subseteq\PU_\xi(\bG)\subseteq\PU_\eta(\bG)
$$
whenever $\bG\subseteq\PP(Z)$ and $\xi\leq\eta<\omega_1$.

\begin{definition}[Louveau, Saint-Raymond]
Let $Z$ be a space, let $\bG\subseteq\PP(Z)$ be continuously closed, and let $\xi<\omega_1$. Define
\begin{itemize}
\item $\ell(\bG)\geq\xi$ if $\PU_\xi(\bG)=\bG$,
\item $\ell(\bG)=\xi$ if $\ell(\bG)\geq\xi$ and $\ell(\bG)\not\geq\xi+1$,
\item $\ell(\bG)=\omega_1$ if $\ell(\bG)\geq\eta$ for every $\eta<\omega_1$.
\end{itemize}
We refer to $\ell(\bG)$ as the \emph{level} of $\bG$.
\end{definition}

As a trivial example, observe that $\ell(\{\varnothing\})=\ell(\{Z\})=\omega_1$. Using the definition of Wadge-reduction, it is a simple exercise to see that $\ell(\bG)\geq 0$ for every Wadge class $\bG$. We remark that it is not clear at this point whether for every non-selfdual Wadge class $\bG$ there exists $\xi\leq\omega_1$ such that $\ell(\bG)=\xi$.\footnote{\,For example, it is conceivable that $\PU_\eta(\bG)=\bG$ for all $\eta<\xi$, where $\xi$ is a limit ordinal, while $\PU_\xi(\bG)\neq\bG$.} This happens to be true under $\AD$, and it can be proved using techniques from \cite{louveaub} (see \cite[Corollary 18.3]{carroymedinimuller}). However, this fact is never used in this article.

\begin{theorem}\label{expansiontheorem}
Assume $\AD$. Let $Z$ be an uncountable zero-dimensional Polish space, let $\bG\in\NSD(Z)$, and let $\xi<\omega_1$. Then the following conditions are equivalent:
\begin{enumerate}
\item\label{levelgeqxi} $\ell(\bG)\geq\xi$,
\item\label{expansionofsomething} $\bG =\bL^{(\xi)}$ for some $\bL\in\NSD(Z)$.
\end{enumerate}
\end{theorem}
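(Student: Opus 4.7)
The proof is a bi-implication. The direction (\ref{expansionofsomething})$\Rightarrow$(\ref{levelgeqxi}) is a direct pasting argument; the reverse (\ref{levelgeqxi})$\Rightarrow$(\ref{expansionofsomething}) is substantive, and I would follow the strategy of \cite[Theorem 7.3.9.ii]{louveaub}.

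For (\ref{expansionofsomething})$\Rightarrow$(\ref{levelgeqxi}), I would directly verify that $\PU_\xi(\bG)\subseteq\bG$. Fix $C\subseteq Z$ with $\bL=[C]$ and consider a typical partitioned union $B=\bigcup_{n\in\omega}(A_n\cap V_n)$ with $A_n\in\bG$ and $(V_n)$ a $\mathbf{\Delta}^0_{1+\xi}$-partition of $Z$. Since $\bG=\bL^{(\xi)}$ and $\bL$ is continuously closed (being a Wadge class), for each $n$ one can express $A_n=f_n^{-1}[C]$ for some $\mathbf{\Sigma}^0_{1+\xi}$-measurable $f_n:Z\longrightarrow Z$: write $A_n=g_n^{-1}[B_n]$ with $B_n\in\bL$ and $g_n$ measurable, then post-compose with a continuous reduction $h_n:Z\longrightarrow Z$ of $B_n$ to $C$ and set $f_n:=h_n\circ g_n$. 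Pasting along the partition, define $f:Z\longrightarrow Z$ by $f\re V_n=f_n$. Then $f$ is $\mathbf{\Sigma}^0_{1+\xi}$-measurable because $f^{-1}[U]=\bigcup_n(V_n\cap f_n^{-1}[U])$ is a countable union of $\mathbf{\Sigma}^0_{1+\xi}$ sets for every open $U\subseteq Z$, and a direct verification yields $B=f^{-1}[C]\in\bL^{(\xi)}=\bG$.

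For (\ref{levelgeqxi})$\Rightarrow$(\ref{expansionofsomething}), I would invoke Corollary \ref{hausdorffevery} to write $\bG=\bG_{E_0}(Z)$ for some $E_0\subseteq\PP(\omega)$. By Corollary \ref{movexi} together with Theorem \ref{addison}, it suffices to produce $D\subseteq\PP(\omega)$ with $\bG=\bG_D^{(\xi)}(Z)$, for then $\bL:=\bG_D(Z)\in\NSD(Z)$ will satisfy $\bL^{(\xi)}=\bG_D(Z)^{(\xi)}=\bG_D^{(\xi)}(Z)=\bG$. To construct such a $D$, I would fix $A$ with $[A]=\bG$ together with an open representation $A=\HH_{E_0}(A_0,A_1,\ldots)$, and use the level hypothesis $\PU_\xi(\bG)=\bG$ to re-express $A$ as $\HH_D(B_0,B_1,\ldots)$ with each $B_k\in\mathbf{\Sigma}^0_{1+\xi}(Z)$. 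The key local instrument is Corollary \ref{expansionbijection}, which produces a Polish refinement of $Z$ where the given Hausdorff ingredients simplify from $\mathbf{\Sigma}^0_{1+\xi}$ to $\mathbf{\Sigma}^0_1$. Applying it cell-by-cell on a $\mathbf{\Delta}^0_{1+\xi}$-partitioned decomposition of $A$ supplied by the level hypothesis, then stitching the local pieces via the composition principle of Proposition \ref{hausdorffcomposition}, yields the desired uniform $D$.

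\textbf{Main obstacle.} I expect the core difficulty to lie in ensuring the uniformity of $D$: Corollary \ref{expansionbijection} in isolation gives a Hausdorff description only for a fixed family of sets, and a different choice of sets in $\bG$ would in principle lead to a different Polish refinement and hence a different Hausdorff prescription. The hypothesis $\PU_\xi(\bG)=\bG$ is precisely the mechanism that lets every element of $\bG$ be decomposed along a common level-$\xi$ scheme driven by the test set $A$; carefully unpacking the resulting nested Hausdorff operations via Propositions \ref{hausdorffsettheoretic}, \ref{hausdorffcomposition}, and \ref{relativizationsettheoretic} to exhibit a single $D$ governing all of $\bG$ simultaneously is, following Louveau, the main bookkeeping burden.
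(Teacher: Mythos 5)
Your direction (\ref{expansionofsomething})$\Rightarrow$(\ref{levelgeqxi}) is correct, and it is in fact a more elementary route than the paper's: you paste the $\mathbf{\Sigma}^0_{1+\xi}$-measurable reductions $f_n$ along the $\mathbf{\Delta}^0_{1+\xi}$-partition directly, whereas the paper transfers to a finer topology via Corollary \ref{expansionbijection}, uses $\PU_0$-closure of $\bG_D(W)$ there, and pulls back through the measurable bijection. Both work; yours avoids the Kuratowski transfer entirely in this direction.

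The direction (\ref{levelgeqxi})$\Rightarrow$(\ref{expansionofsomething}) has a genuine gap, and it is not the ``uniformity of $D$'' bookkeeping you flag. Re-expressing the complete set $A$ as $\HH_D(B_0,B_1,\ldots)$ with $B_k\in\mathbf{\Sigma}^0_{1+\xi}(Z)$ is trivial (take $D=E_0$ and the open ingredients themselves, since $\mathbf{\Sigma}^0_1\subseteq\mathbf{\Sigma}^0_{1+\xi}$); it only yields $A\in\bG_{E_0}^{(\xi)}(Z)$, i.e.\ $\bG\subseteq\bG_{E_0}(Z)^{(\xi)}$, and says nothing about the reverse inclusion, which is the entire content of the statement: one must find a class whose $\xi$-expansion is neither larger nor smaller than $\bG$. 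Moreover, the level hypothesis $\PU_\xi(\bG)=\bG$ is a closure property and does not ``supply'' any $\mathbf{\Delta}^0_{1+\xi}$-partitioned decomposition of $A$ into simpler pieces; since $A$ is non-selfdual, no such decomposition of $A$ itself exists in general, so the proposed cell-by-cell application of Corollary \ref{expansionbijection} has nothing to act on. The paper's argument (following Louveau) supplies the missing mechanism as follows: take $\bL\in\NSD(Z)$ minimal with $\bG\subseteq\bL^{(\xi)}$ (this family is non-empty since $\bL=\bG$ qualifies, and a minimal element exists by Theorem \ref{wellfounded}); if $\bL^{(\xi)}\nsubseteq\bG$, then $\bG\subseteq\Delta(\bL^{(\xi)})$ by Lemma \ref{wadgelemma}, so by Corollary \ref{expansionbijection} there is a $\mathbf{\Sigma}^0_{1+\xi}$-measurable bijection $f:Z\longrightarrow W$ with $f[A]$ and $f[Z\setminus A]$ both in $\bG_E(W)$, where $\bL=\bG_E(Z)$. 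Minimality of $\bL$ (via Corollary \ref{movexi} and Proposition \ref{orderisomorphism}) forces $f[A]$ to be selfdual in $W$; Corollary \ref{selfdualcorollary} then decomposes $f[A]$ along a clopen partition of $W$ into strictly simpler non-selfdual pieces, and minimality plus Lemma \ref{wadgelemma} put the complements of these pieces, pulled back through $f$, into $\bG$, while the partition pulls back to a $\mathbf{\Delta}^0_{1+\xi}$-partition of $Z$. Only now is the hypothesis $\ell(\bG)\geq\xi$ used: it gives $Z\setminus A\in\PU_\xi(\bG)=\bG$, contradicting non-selfduality of $\bG$. Some argument of this kind --- decomposing in the transferred space and deriving a contradiction with non-selfduality, rather than directly manufacturing $D$ from representations of $A$ --- is what your sketch is missing.
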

\begin{proof}
In order to show that $(\ref{levelgeqxi})\rightarrow (\ref{expansionofsomething})$, assume that $\ell(\bG)\geq\xi$. Let $\bL\in\NSD(Z)$ be minimal with respect to the property that $\bG\subseteq\bL^{(\xi)}$. Assume, in order to get a contradiction, that $\bL^{(\xi)}\nsubseteq\bG$. It follows from Lemma \ref{wadgelemma} that $\bG\subseteq\bLc^{(\xi)}$, hence $\bG\subseteq\Delta(\bL^{(\xi)})$. Fix $A\subseteq Z$ such that $\bG=[A]$. Also fix $D,E\subseteq\PP(\omega)$ such that $\bG=\bG_D(Z)$ and $\bL=\bG_E(Z)$. Then $\{A,Z\setminus A\}\subseteq\bG_E(Z)^{(\xi)}=\bG_E^{(\xi)}(Z)$, where the equality holds by Corollary \ref{movexi}. Then, by Corollary \ref{expansionbijection}, we can fix a zero-dimensional Polish space $W$ and a $\mathbf{\Sigma}^0_{1+\xi}$-measurable bijection $f:Z\longrightarrow W$ such that $\{f[A],f[Z\setminus A]\}\subseteq\bG_E(W)$.

Next, we will show that $[f[A]]\in\SD(W)$. Assume, in order to get a contradiction, that this is not the case. By Corollary \ref{hausdorffevery}, we can fix $F\subseteq\PP(\omega)$ such that $[f[A]]=\bG_F(W)$. Notice that $\bG_F(W)\subseteq\bG_E(W)$. Furthermore $W\setminus f[A]=f[Z\setminus A]\in\bG_E(W)$, hence $\bGc_F(W)\subseteq\bG_E(W)$. Since $\bG_E(W)$ is non-selfdual by Theorem \ref{addison}, it follows that $\bG_F(W)\subsetneq\bG_E(W)$. Therefore, $\bG_F(Z)\subsetneq\bG_E(Z)=\bL$ by Proposition \ref{orderisomorphism}. On the other hand, Lemma \ref{expansionrelativization}.\ref{expansionmeasurablepreimage} and Corollary \ref{movexi} show that $A=f^{-1}[f[A]]\in\bG_F^{(\xi)}(Z)=\bG_F(Z)^{(\xi)}$. Hence $\bG\subseteq\bG_F(Z)^{(\xi)}$, which contradicts the minimality of $\bL$.

Since $[f[A]]\in\SD(W)$, by Corollaries \ref{selfdualcorollary} and \ref{hausdorffevery}, we can fix $A_n\subseteq W$, $G_n\subseteq\PP(\omega)$ and pairwise disjoint $V_n\in\mathbf{\Delta}^0_1(W)$ for $n\in\omega$ such that $f[A]=\bigcup_{n\in\omega}(A_n\cap V_n)$ and $A_n\in\bG_{G_n}(W)\subsetneq\bG_E(W)$ for each $n$. Notice that $\bG_{G_n}(Z)\subsetneq\bG_E(Z)$ for each $n$ by Proposition \ref{orderisomorphism}, hence $\bG_D(Z)\nsubseteq\bG_{G_n}(Z)^{(\xi)}$ for each $n$ by the minimality of $\bL$. It follows from Corollary \ref{movexi} and Lemma \ref{wadgelemma} that $\bGc_{G_n}^{(\xi)}(Z)\subseteq\bG_D(Z)$. Then, using Propositions \ref{movexidown} and \ref{orderisomorphism}, one sees that $\bGc_{G_n}^{(\xi)}(W)\subseteq\bG_D(W)$.

Set $B_n=W\setminus A_n\in\bGc_{G_n}(W)$ for $n\in\omega$. Observe that $f^{-1}[B_n]\in\bGc_{G_n}^{(\xi)}(Z)\subseteq\bG_D(Z)=\bG$ for each $n$ by Lemma \ref{expansionrelativization}.\ref{expansionmeasurablepreimage}. Furthermore, it is clear that $f^{-1}[V_n]\in\mathbf{\Delta}^0_{1+\xi}(Z)$ for each $n$. In conclusion, since $W\setminus f[A]=\bigcup_{n\in\omega}(B_n\cap V_n)$, we see that
$$
Z\setminus A=\bigcup_{n\in\omega}(f^{-1}[B_n]\cap f^{-1}[V_n])\in\PU_\xi(\bG)=\bG,
$$
where the last equality uses the assumption that $\ell(\bG)\geq\xi$. This contradicts the fact that $\bG$ is non-selfdual.

In order to show that $(\ref{expansionofsomething})\rightarrow(\ref{levelgeqxi})$, let $\bL\in\NSD(Z)$ be such that $\bL^{(\xi)}=\bG$. Pick $A_n\in\bG$ and pairwise disjoint $V_n\in\mathbf{\Delta}^0_{1+\xi}(Z)$ for $n\in\omega$ such that $\bigcup_{n\in\omega}V_n=Z$. We need to show that $\bigcup_{n\in\omega}(A_n\cap V_n)\in\bG$. By Corollary \ref{hausdorffevery}, we can fix $D\subseteq\PP(\omega)$ such that $\bL=\bG_D(Z)$. By Corollary \ref{expansionbijection}, we can fix a Polish space $W$ and a $\mathbf{\Sigma}^0_{1+\xi}$-measurable bijection $f:Z\longrightarrow W$ such that each $f[A_n]\in\bG_D(W)$ and each $f[V_n]\in\mathbf{\Delta}^0_1(W)$. Let $B=\bigcup_{n\in\omega}(f[A_n]\cap f[V_n])$. Since $\bG_D(W)$ is a Wadge class in $W$ by Theorem \ref{addison}, one sees that $B\in\PU_0(\bG_D(W))=\bG_D(W)$. It follows from Lemma \ref{expansionrelativization}.\ref{expansionmeasurablepreimage} that 
$$
\bigcup_{n\in\omega}(A_n\cap V_n)=f^{-1}[B]\in\bG^{(\xi)}_D(Z)=\bL^{(\xi)}=\bG,
$$
where the second equality holds by Corollary \ref{movexi}.
\end{proof}

\begin{corollary}\label{levelinvariance}
Assume $\AD$. Let $Z$ and $W$ be uncountable zero-dimensional Polish spaces, let $D\subseteq\PP(\omega)$, and let $\xi<\omega_1$. Then $\ell(\bG_D(Z))\geq\xi$ iff $\ell(\bG_D(W))\geq\xi$.
\end{corollary}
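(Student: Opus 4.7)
The plan is to reduce the condition $\ell(\bG_D(Z))\geq\xi$ to a purely set-theoretic statement about some $E\subseteq\PP(\omega)$, which can then be transferred between $Z$ and $W$ by Proposition \ref{orderisomorphism}. First, since every uncountable Polish space contains a copy of $2^\omega$, Theorem \ref{addison} guarantees $\bG_D(Z)\in\NSD(Z)$, so the expansion theorem (Theorem \ref{expansiontheorem}) is applicable. It gives that $\ell(\bG_D(Z))\geq\xi$ iff there is some $\bL\in\NSD(Z)$ with $\bG_D(Z)=\bL^{(\xi)}$.

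Next I would rewrite this existence statement in a form that does not mention $Z$ explicitly. By Corollary \ref{hausdorffevery}, any such $\bL$ is of the form $\bG_E(Z)$ for some $E\subseteq\PP(\omega)$; by Corollary \ref{movexi}, $\bL^{(\xi)}=\bG_E(Z)^{(\xi)}=\bG_E^{(\xi)}(Z)$; and by Proposition \ref{movexidown}, there is an $F\subseteq\PP(\omega)$ (depending only on $E$ and $\xi$, \emph{not} on the ambient space) satisfying $\bG_E^{(\xi)}(Z')=\bG_F(Z')$ for every space $Z'$. Thus $\ell(\bG_D(Z))\geq\xi$ is equivalent to the assertion that there exists $E\subseteq\PP(\omega)$ (with associated $F$ as above) such that $\bG_D(Z)=\bG_F(Z)$.

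Now apply Proposition \ref{orderisomorphism} in both directions: the equality $\bG_D(Z)=\bG_F(Z)$ is equivalent to $\bG_D(W)=\bG_F(W)$. Since the same $F$ also satisfies $\bG_E^{(\xi)}(W)=\bG_F(W)=\bG_E(W)^{(\xi)}$, and $\bG_E(W)\in\NSD(W)$ by Theorem \ref{addison} applied on $W$, a second invocation of Theorem \ref{expansiontheorem} (in the $(\ref{expansionofsomething})\to(\ref{levelgeqxi})$ direction) yields $\ell(\bG_D(W))\geq\xi$. The converse is symmetric.

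No real obstacle arises; the proof is essentially the observation that the chain \emph{expansion theorem $\Rightarrow$ Corollary \ref{hausdorffevery} $\Rightarrow$ Corollary \ref{movexi} $\Rightarrow$ Proposition \ref{movexidown}} converts a condition involving $\NSD(Z)$ into one whose only space-dependent content is the equality of two $\bG_D$-style classes, to which Proposition \ref{orderisomorphism} directly applies. The one point to monitor is that the set $F$ produced by Proposition \ref{movexidown} is genuinely uniform in the ambient space, which is precisely how that proposition is phrased.
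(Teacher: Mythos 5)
Your proposal is correct and follows essentially the same route as the paper's proof: apply Theorems \ref{addison} and \ref{expansiontheorem} to get $\bL^{(\xi)}=\bG_D(Z)$, use Corollary \ref{hausdorffevery} to write $\bL=\bG_E(Z)$, then Corollary \ref{movexi} and the space-uniform $F$ from Proposition \ref{movexidown} to reduce everything to $\bG_D=\bG_F$, which Proposition \ref{orderisomorphism} transfers to $W$, after which Corollary \ref{movexi} and Theorem \ref{expansiontheorem} on $W$ finish the argument. The uniformity of $F$ that you flag as the point to monitor is indeed exactly how the paper uses Proposition \ref{movexidown}.
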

\begin{proof}
We will only prove the left-to-right implication, as the other one can be proved similarly. Assume that $\ell(\bG_D(Z))\geq\xi$. Then, by Theorems \ref{addison} and \ref{expansiontheorem}, there exists $\bL\in\NSD(Z)$ such that $\bL^{(\xi)}=\bG_D(Z)$. By Corollary \ref{hausdorffevery}, we can fix $E\subseteq\PP(\omega)$ such that $\bL=\bG_E(Z)$. By Proposition \ref{movexidown}, we can fix $F\subseteq\PP(\omega)$ such that $\bG_F(Z)=\bG_E^{(\xi)}(Z)$ and $\bG_F(W)=\bG_E^{(\xi)}(W)$. Notice that $\bG_F(Z)=\bG_D(Z)$ by Corollary \ref{movexi}, hence $\bG_F(W)=\bG_D(W)$ by Proposition \ref{orderisomorphism}. By applying  Corollary \ref{movexi} again, we see that $\bG_D(W)=\bG_E(W)^{(\xi)}$, hence $\ell(\bG_D(W))\geq\xi$ by Theorems \ref{addison} and \ref{expansiontheorem}.
\end{proof}

\section{Good Wadge classes}

The following key notion is essentially due to van Engelen, although he did not give it a name. One important difference is that van Engelen's treatment of this notion is fundamentally tied to Louveau's classification of the Borel Wadge classes from \cite{louveaua}, hence it is limited to the Borel context. The notion of level and the expansion theorem allow us to completely bypass \cite{louveaua}, and extend this concept to arbitrary Wadge classes.

\begin{definition}
Let $Z$ be a space, and let $\bG$ be a Wadge class in $Z$. We will say that $\bG$ is \emph{good} if the following conditions are satisfied:
\begin{itemize}
\item $\bG$ is non-selfdual,
\item $\Delta(\mathsf{D}_\omega(\mathbf{\Sigma}^0_2(Z)))\subseteq\bG$,
\item $\ell(\bG)\geq 1$.
\end{itemize}
\end{definition}

The following proposition gives some concrete examples of good Wadge classes.
\begin{proposition}\label{differencesgood}
Let $Z$ be an uncountable zero-dimensional Polish space, let $\omega\leq\eta<\omega_1$, and let $2\leq\xi<\omega_1$. Then $\mathsf{D}_\eta(\mathbf{\Sigma}^0_\xi(Z))$ is a good Wadge class in $Z$.
\end{proposition}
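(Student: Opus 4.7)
The plan is to verify, in order, the three defining conditions of a good Wadge class for $\bG:=\mathsf{D}_\eta(\mathbf{\Sigma}^0_\xi(Z))$: non-selfduality, the inclusion $\Delta(\mathsf{D}_\omega(\mathbf{\Sigma}^0_2(Z)))\subseteq\bG$, and $\ell(\bG)\geq 1$. The central idea is to realize $\bG$ as a Hausdorff expansion, which makes the machinery of Sections 6--11 directly applicable.

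First I would apply Proposition~\ref{expansiondifferences} to fix $D\subseteq\PP(\omega)$ such that $\bG_D^{(\zeta)}(Z')=\mathsf{D}_\eta(\mathbf{\Sigma}^0_{1+\zeta}(Z'))$ for every space $Z'$ and every $\zeta<\omega_1$. Since $\xi\geq 2$, I can pick $\zeta\geq 1$ with $1+\zeta=\xi$ (take $\zeta=\xi-1$ if $\xi$ is finite and $\zeta=\xi$ otherwise), so that $\bG=\bG_D^{(\zeta)}(Z)$. Corollary~\ref{expansionhausdorffnonselfdual} then yields non-selfduality at once. Next, rewriting $\bG=\bG_D(Z)^{(\zeta)}$ via Corollary~\ref{movexi}, and observing that $\bG_D(Z)\in\NSD(Z)$ by Theorem~\ref{addison}, the implication $(\ref{expansionofsomething})\Rightarrow(\ref{levelgeqxi})$ of the expansion theorem (Theorem~\ref{expansiontheorem}) delivers $\ell(\bG)\geq\zeta\geq 1$.

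For the remaining inclusion $\Delta(\mathsf{D}_\omega(\mathbf{\Sigma}^0_2(Z)))\subseteq\bG$, it suffices to show $\mathsf{D}_\omega(\mathbf{\Sigma}^0_2(Z))\subseteq\bG$, since $\Delta(\bL)\subseteq\bL$ for any $\bL$. Because $\mathbf{\Sigma}^0_2(Z)\subseteq\mathbf{\Sigma}^0_\xi(Z)$, this further reduces to monotonicity of the difference hierarchy in its first parameter: $\mathsf{D}_{\eta_1}(\mathbf{\Sigma}^0_\xi(Z))\subseteq\mathsf{D}_{\eta_2}(\mathbf{\Sigma}^0_\xi(Z))$ whenever $\eta_1\leq\eta_2<\omega_1$. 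Using the increasing representation granted by the footnote to the definition of $\mathsf{D}_\eta$, an $\eta_1$-difference is rewritten as an $\eta_2$-difference by prepending an empty set when the parities of $\eta_1$ and $\eta_2$ disagree and otherwise padding the tail with repetitions of the final term, so that each additional stage contributes the empty set.

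The conceptual content is entirely packaged inside Theorem~\ref{expansiontheorem} and the Hausdorff-operation representation of the difference classes (Proposition~\ref{expansiondifferences}), both already established. The only mildly tedious step is the parity bookkeeping for the monotonicity inclusion; I do not anticipate any substantial obstacle beyond that.
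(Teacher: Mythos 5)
Your overall route---representing $\mathsf{D}_\eta(\mathbf{\Sigma}^0_\xi(Z))$ as $\bG_D^{(\zeta)}(Z)$ via Proposition \ref{expansiondifferences} and reading off non-selfduality from Corollary \ref{expansionhausdorffnonselfdual}---is exactly the paper's. The one substantive deviation is how you obtain $\ell(\bG)\geq 1$: you invoke the implication $(\ref{expansionofsomething})\Rightarrow(\ref{levelgeqxi})$ of Theorem \ref{expansiontheorem}, which is stated (and proved, through Corollary \ref{hausdorffevery}) under $\AD$, whereas Proposition \ref{differencesgood} carries no determinacy hypothesis. The paper deliberately avoids the expansion theorem at this point and instead cites \cite[Th\'eor\`{e}me 8]{louveausaintraymondf} together with Corollary \ref{movexi}, precisely because that result needs no $\AD$ (this is spelled out in the footnote to the paper's proof). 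So, as written, your argument only establishes the proposition under $\AD$. This is harmless for the applications in this paper (Theorem \ref{classesofhomogeneousspaces} assumes $\AD$ anyway), and the gap is repairable: in the direction you use, the only appeal to $\AD$ is Corollary \ref{hausdorffevery}, which you do not actually need since your class is already presented in the form $\bG_E(Z)$; unwinding that proof, or quoting Louveau--Saint-Raymond as the paper does, yields the $\AD$-free statement.

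A second, minor point concerns the inclusion $\Delta(\mathsf{D}_\omega(\mathbf{\Sigma}^0_2(Z)))\subseteq\bG$, which the paper simply records as trivial. Your general monotonicity claim $\mathsf{D}_{\eta_1}(\mathbf{\Sigma}^0_\xi(Z))\subseteq\mathsf{D}_{\eta_2}(\mathbf{\Sigma}^0_\xi(Z))$ for $\eta_1\leq\eta_2$ is true, but the recipe ``prepend $\varnothing$ when the parities disagree'' does not work verbatim when $\eta_1$ is infinite: since $1+\mu=\mu$ for $\mu\geq\omega$, prepending shifts only the finite stages and fails to flip the parity of the stages indexed by infinite ordinals (one must in addition insert $\bigcup_{\zeta<\lambda}A_\zeta$ at limit stages). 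For the instance you actually need, namely $\eta_1=\omega\leq\eta$, all original stages are finitely indexed, so prepending plus padding the tail with $\bigcup_{n\in\omega}A_n$ (note there is no ``final term'', as $\omega$ is a limit) does work, and together with $\mathbf{\Sigma}^0_2(Z)\subseteq\mathbf{\Sigma}^0_\xi(Z)$ this gives the required inclusion.
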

\begin{proof}
Set $\bG=\mathsf{D}_\eta(\mathbf{\Sigma}^0_\xi(Z))$. The fact that $\bG\in\NSD(Z)$ follows from Propositions \ref{expansiondifferences} and \ref{expansionhausdorffnonselfdual}. The inclusion $\Delta(\mathsf{D}_\omega(\mathbf{\Sigma}^0_2(Z)))\subseteq\bG$ holds trivially. Finally, using Corollary \ref{movexi} and \cite[Th\'eor\`{e}me 8]{louveausaintraymondf},\footnote{\,Here, we apply \cite[Th\'eor\`{e}me 8]{louveausaintraymondf} instead of Theorem \ref{expansiontheorem} simply because the former does not require $\AD$.} one sees that $\ell(\bG)\geq 1$.
\end{proof}

The main result of this section is Theorem \ref{closureGdelta}, which will be crucial in showing that good Wadge classes are reasonably closed (see Lemma \ref{goodimpliesreasonablyclosed}). The case $Z=\omega^\omega$ of the following lemma is due to Andretta, Hjorth, and Neeman (see \cite[Lemma 3.6.a]{andrettahjorthneeman}), and the general case follows easily from this particular case (thanks to the machinery of relativization).

\begin{lemma}\label{closureclosed}
Assume $\AD$. Let $Z$ be an uncountable zero-dimensional Polish space, and let $\bG\in\NSD(Z)$. Assume that $\mathsf{D}_n(\mathbf{\Sigma}^0_1(Z))\subseteq\bG$ for every $n\in\omega$.
	\begin{itemize}
	\item If $A\in\bG$ and $C\in\mathbf{\Pi}^0_1(Z)$ then $A\cap C\in\bG$.
	\item If $A\in\bG$ and $U\in\mathbf{\Sigma}^0_1(Z)$ then $A\cup U\in\bG$.
	\end{itemize}
\end{lemma}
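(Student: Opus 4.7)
The plan is to reduce both claims to the case $Z=\omega^\omega$, which is supplied by Andretta--Hjorth--Neeman and cited in the paragraph before the lemma, and then use the machinery of relativization from Sections~6--8 to transfer the conclusion to arbitrary $Z$. By \cite[Theorem 7.8]{kechris} together with Lemma \ref{relativization}.\ref{homeomorphism}, I may assume that $Z$ is a closed subspace of $\omega^\omega$; then by \cite[Proposition 2.8]{kechris} there is a retraction $\rho\colon\omega^\omega\longrightarrow Z$. By Corollary \ref{hausdorffevery}, fix $D\subseteq\PP(\omega)$ with $\bG=\bG_D(Z)$, and set $\bL=\bG_D(\omega^\omega)$, which is non-selfdual by Theorem \ref{addison}.

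The next step is to verify that the hypothesis of the lemma lifts from $Z$ to $\omega^\omega$. By Proposition \ref{hausdorffdifferences}, there exists $E_n\subseteq\PP(\omega)$ such that $\bG_{E_n}(Y)=\mathsf{D}_n(\mathbf{\Sigma}^0_1(Y))$ for every space $Y$. The assumption $\mathsf{D}_n(\mathbf{\Sigma}^0_1(Z))\subseteq\bG_D(Z)$ becomes $\bG_{E_n}(Z)\subseteq\bG_D(Z)$; since $Z$ is uncountable Polish and zero-dimensional it contains a copy of $2^\omega$, so Proposition \ref{orderisomorphism} gives $\bG_{E_n}(\omega^\omega)\subseteq\bG_D(\omega^\omega)$, i.e.\ $\mathsf{D}_n(\mathbf{\Sigma}^0_1(\omega^\omega))\subseteq\bL$ for every $n\in\omega$. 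Hence the particular case $Z=\omega^\omega$ (due to Andretta--Hjorth--Neeman, applied to $\bL$) is available.

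The transfer itself is then routine. For the first bullet, let $A\in\bG$ and $C\in\mathbf{\Pi}^0_1(Z)$. Then $\rho^{-1}[A]\in\bL$ by Lemma \ref{relativization}.\ref{preimage}, and because $Z$ is closed in $\omega^\omega$ the set $C$ is already closed in $\omega^\omega$, so $\rho^{-1}[A]\cap C\in\bL$ by the $\omega^\omega$ case. Intersecting with $Z$ and using $\rho^{-1}[A]\cap Z=A$ and $C\subseteq Z$ yields $(\rho^{-1}[A]\cap C)\cap Z=A\cap C$, hence $A\cap C\in\bG_D(Z)=\bG$ by Lemma \ref{relativization}.\ref{subspace}. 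For the second bullet, write $U=Z\cap U'$ with $U'\in\mathbf{\Sigma}^0_1(\omega^\omega)$; then $\rho^{-1}[A]\cup U'\in\bL$ by the $\omega^\omega$ case, and intersecting with $Z$ gives $A\cup U\in\bG$. I do not foresee a serious obstacle: the only thing to be careful about is that closed subsets of $Z$ really do extend (in fact coincide) with closed subsets of $\omega^\omega$ once $Z$ has been identified with a closed subspace, and that open subsets of $Z$ are exactly traces of open subsets of $\omega^\omega$, which is precisely what makes the relativization machinery go through.
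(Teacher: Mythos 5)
Your proposal is correct and follows essentially the same route as the paper: identify $Z$ with a closed subspace of $\omega^\omega$, fix a retraction, use Corollary \ref{hausdorffevery} together with Propositions \ref{hausdorffdifferences} and \ref{orderisomorphism} to lift the hypothesis to $\bL=\bG_D(\omega^\omega)$, invoke the Andretta--Hjorth--Neeman result there, and pull the conclusion back with Lemma \ref{relativization}. The only cosmetic differences are that the paper derives the second bullet from the first by replacing $\bG$ with $\bGc$ (rather than transferring both bullets), and it spells out that the strictly increasing classes $\mathsf{D}_n(\mathbf{\Sigma}^0_1(\omega^\omega))\subseteq\bL$ give $||\bL||\geq\omega$, which is the precise hypothesis of \cite[Lemma 3.6.a]{andrettahjorthneeman}.
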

\begin{proof}
Observe that, since $\bGc$ also satisfies the assumptions of the lemma, it will be enough to prove the first statement. So pick $A\in\bG$ and $C\in\mathbf{\Pi}^0_1(Z)$. By Corollary \ref{hausdorffevery}, we can fix $D\subseteq\PP(\omega)$ such that $\bG=\bG_D(Z)$. Set $\bL=\bG_D(\omega^\omega)$. Using Lemma \ref{relativization}.\ref{homeomorphism}, we can assume without loss of generality that $Z$ is a closed subspace of $\omega^\omega$. By \cite[Proposition 2.8]{kechris}, we can fix a retraction $\rho:\omega^\omega\longrightarrow Z$. Notice that $\rho^{-1}[A]\in\bL$ by Lemma \ref{relativization}.\ref{preimage}. Furthermore, it is clear that $C\in\mathbf{\Pi}^0_1(\omega^\omega)$.

Next, we claim that $||\bL||\geq\omega$ (see Definition \ref{wadgerank}). Since $\mathsf{D}_n(\mathbf{\Sigma}^0_1(Z))\subseteq\bG$ for every $n\in\omega$, using Propositions \ref{hausdorffdifferences} and \ref{orderisomorphism} one sees that $\mathsf{D}_n(\mathbf{\Sigma}^0_1(\omega^\omega))\subseteq\bL$ for every $n\in\omega$. Since these are Wadge classes by Theorem \ref{addison}, and they form a strictly increasing sequence by \cite[Exercise 22.26.iv]{kechris}, our claim is proved. Therefore, we can apply \cite[Lemma 3.6.a]{andrettahjorthneeman}, which shows that $\rho^{-1}[A]\cap C\in\bL$. Finally, Lemma \ref{relativization}.\ref{subspace} shows that $A\cap C=(\rho^{-1}[A]\cap C)\cap Z\in\bG_D(Z)=\bG$.
\end{proof}

\begin{theorem}\label{closureGdelta}
Assume $\AD$. Let $Z$ be an uncountable zero-dimensional Polish space, and let $\bG\in\NSD(Z)$. Assume that $\mathsf{D}_n(\mathbf{\Sigma}^0_2(Z))\subseteq\bG$ for every $n\in\omega$ and $\ell(\bG)\geq 1$.
\begin{itemize}
	\item If $A\in\bG$ and $G\in\mathbf{\Pi}^0_2(Z)$ then $A\cap G\in\bG$.
	\item If $A\in\bG$ and $F\in\mathbf{\Sigma}^0_2(Z)$ then $A\cup F\in\bG$.
	\end{itemize}
In particular, the above two statements hold for every good Wadge class $\bG$ in $Z$.
\end{theorem}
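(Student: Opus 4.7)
The plan is to reduce everything to Lemma \ref{closureclosed} via an expansion. Since $\ell(\bG)\geq 1$ and $\bG\in\NSD(Z)$, Theorem \ref{expansiontheorem} produces $\bL\in\NSD(Z)$ with $\bG=\bL^{(1)}$; fix $E\subseteq\PP(\omega)$ with $\bL=\bG_E(Z)$ by Corollary \ref{hausdorffevery}, so $\bG=\bG_E^{(1)}(Z)$ by Corollary \ref{movexi}. As a preliminary, I would verify $\mathsf{D}_n(\mathbf{\Sigma}^0_1(Z))\subseteq\bL$ for every $n\in\omega$: by Proposition \ref{hausdorffdifferences}, fix $D_n$ with $\bG_{D_n}(Z)=\mathsf{D}_n(\mathbf{\Sigma}^0_1(Z))$; then the hypothesis and Corollary \ref{movexi} give $\bG_{D_n}(Z)^{(1)}=\mathsf{D}_n(\mathbf{\Sigma}^0_2(Z))\subseteq\bG=\bL^{(1)}$, and Corollary \ref{expansionorder} yields $\bG_{D_n}(Z)\subseteq\bL$.

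For the intersection claim, fix $A\in\bG$ and $G\in\mathbf{\Pi}^0_2(Z)$. Applying Corollary \ref{expansionbijection} to $\Aa=\{A\}\subseteq\bG_E^{(1)}(Z)$ and $\BB=\{Z\setminus G\}\subseteq\mathbf{\Sigma}^0_2(Z)$ yields an uncountable zero-dimensional Polish space $W$ and a $\mathbf{\Sigma}^0_2$-measurable bijection $f\colon Z\longrightarrow W$ such that $f[A]\in\bG_E(W)$ and $f[Z\setminus G]\in\mathbf{\Sigma}^0_1(W)$; since $f$ is a bijection, $f[G]=W\setminus f[Z\setminus G]\in\mathbf{\Pi}^0_1(W)$. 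Proposition \ref{orderisomorphism} transports $\mathsf{D}_n(\mathbf{\Sigma}^0_1(Z))\subseteq\bG_E(Z)$ into $\mathsf{D}_n(\mathbf{\Sigma}^0_1(W))\subseteq\bG_E(W)$ for each $n$, so Lemma \ref{closureclosed} applied in $W$ delivers $f[A]\cap f[G]\in\bG_E(W)$. Using that $f$ is a bijection, $f[A\cap G]=f[A]\cap f[G]$, and Lemma \ref{expansionrelativization}.\ref{expansionmeasurablepreimage} then gives $A\cap G=f^{-1}[f[A]\cap f[G]]\in\bG_E^{(1)}(Z)=\bG$.

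The union claim follows by applying the intersection claim to $\bGc$ in place of $\bG$: $\bGc$ is non-selfdual with $\ell(\bGc)=\ell(\bG)\geq 1$ (partitioned unions commute with complementation because the $V_n$ partition $Z$), and the hypothesis on $\bG$ forces $\mathsf{D}_n(\mathbf{\Sigma}^0_2(Z))\subseteq\bGc$ for every $n$ because the complement of any $\mathsf{D}_n(\mathbf{\Sigma}^0_2(Z))$-set lies in $\mathsf{D}_{n+1}(\mathbf{\Sigma}^0_2(Z))\subseteq\bG$. Then $Z\setminus(A\cup F)=(Z\setminus A)\cap(Z\setminus F)\in\bGc$, whence $A\cup F\in\bG$. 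For the ``in particular'' clause, a good Wadge class $\bG$ trivially has $\ell(\bG)\geq 1$, and by the same remark about complements of finite-difference sets together with $\mathsf{D}_n\subseteq\mathsf{D}_\omega$, we obtain $\mathsf{D}_n(\mathbf{\Sigma}^0_2(Z))\subseteq\Delta(\mathsf{D}_\omega(\mathbf{\Sigma}^0_2(Z)))\subseteq\bG$ for every $n\in\omega$.

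The main obstacle is orchestrating the single bijection produced by Corollary \ref{expansionbijection} so that it \emph{simultaneously} presents $A$ as a genuine $\bG_E(W)$-set and $G$ as a closed set in $W$; once this joint simplification is achieved, the problem collapses to the level-$0$ closure already given by Lemma \ref{closureclosed}, and Lemma \ref{expansionrelativization}.\ref{expansionmeasurablepreimage} ferries the conclusion back to $Z$. The rest is essentially bookkeeping to verify that the hypothesis $\mathsf{D}_n(\mathbf{\Sigma}^0_2(Z))\subseteq\bG$ translates, after moving through the expansion and the bijection, into the base-case hypothesis required by Lemma \ref{closureclosed}.
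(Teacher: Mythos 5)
Your proposal is correct, and it differs from the paper's proof in how the two sets $A$ and $G$ are combined. Both arguments share the same overall strategy: use Theorem \ref{expansiontheorem} and Corollary \ref{hausdorffevery} to write $\bG=\bG_E(Z)^{(1)}$, transfer the hypothesis $\mathsf{D}_n(\mathbf{\Sigma}^0_2(Z))\subseteq\bG$ down to $\mathsf{D}_n(\mathbf{\Sigma}^0_1)\subseteq\bG_E$ via Corollaries \ref{movexi} and \ref{expansionorder}, reduce to the level-$0$ closure property of Lemma \ref{closureclosed}, and ferry the result back with Lemma \ref{expansionrelativization}.\ref{expansionmeasurablepreimage}; the complementation remarks handling the union clause and the ``in particular'' clause are also as in the paper. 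Where you diverge is the combining step: the paper writes $A=f^{-1}[B]$ and $G=g^{-1}[C]$ for two separate $\mathbf{\Sigma}^0_2$-measurable maps $f,g:Z\longrightarrow Z$, passes to the product $Z\times Z$ (where $B\times Z\in\bG_E(Z\times Z)$ and $Z\times C$ is closed, and where the claim $\mathsf{D}_n(\mathbf{\Sigma}^0_1(Z\times Z))\subseteq\bG_E(Z\times Z)$ is verified via Proposition \ref{orderisomorphism}), applies Lemma \ref{closureclosed} there, and pulls back along the $\mathbf{\Sigma}^0_2$-measurable map $(f,g)$; you instead invoke Corollary \ref{expansionbijection} once, with $\Aa=\{A\}$ and $\BB=\{Z\setminus G\}$, to refine the topology simultaneously so that $A$ becomes a $\bG_E$-set and $G$ becomes closed in a single space $W$, and then apply Lemma \ref{closureclosed} in $W$ directly. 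Your route avoids the product-space detour entirely (and is essentially the same simultaneous-refinement technique the paper itself uses in the $(2)\rightarrow(1)$ direction of Theorem \ref{expansiontheorem}), at the cost of nothing substantial: $W$ is uncountable since $f$ is a bijection, so Proposition \ref{orderisomorphism}, Theorem \ref{addison} (for $\bG_E(W)\in\NSD(W)$, which Lemma \ref{closureclosed} needs and which you should cite), and Lemma \ref{closureclosed} all apply in $W$. One cosmetic slip: for the identity $\bG_{D_n}^{(1)}(Z)=\mathsf{D}_n(\mathbf{\Sigma}^0_2(Z))$ you should cite Proposition \ref{expansiondifferences} (which guarantees that the same $D_n$ works at every expansion level) rather than Proposition \ref{hausdorffdifferences}, exactly as the paper does.
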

\begin{proof}
Observe that, since $\bGc$ also satisfies the assumptions of the theorem, it will be enough to prove the first statement. So pick $A\in\bG$ and $G\in\mathbf{\Pi}^0_2(Z)$. By Theorem \ref{expansiontheorem}, we can pick $\bL\in\NSD(Z)$ such that $\bL^{(1)}=\bG$. By Corollary \ref{hausdorffevery}, we can fix $E\subseteq\PP(\omega)$ such that $\bG_E(Z)=\bL$.

Since $\bL^{(1)}=\bG$, there exists a $\mathbf{\Sigma}^0_2$-measurable function $f:Z\longrightarrow Z$ and $B\in\bL$ such that $A=f^{-1}[B]$. Furthermore, using Corollary \ref{movexi} for a suitable choice of $D$, it is easy to check that $\mathbf{\Pi}^0_1(Z)^{(1)}=\mathbf{\Pi}^0_2(Z)$. Therefore, there exists a $\mathbf{\Sigma}^0_2$-measurable function $g:Z\longrightarrow Z$ and $C\in\mathbf{\Pi}^0_1(Z)$ such that $G=g^{-1}[C]$. By applying Lemma \ref{relativization}.\ref{preimage} to the projection on the first coordinate $\pi:Z\times Z\longrightarrow Z$, one sees that $B\times Z\in\bG_E(Z\times Z)$. Furthermore, it is clear that $Z\times C\in\mathbf{\Pi}^0_1(Z\times Z)$.

We claim that $\mathsf{D}_n(\mathbf{\Sigma}^0_1(Z\times Z))
\subseteq\bG_E(Z\times Z)$ for every $n\in\omega$. So fix $n\in\omega$, and let $D\subseteq\PP(\omega)$ be the set given by Proposition \ref{expansiondifferences} when $\eta=n$. Notice that
$$
\bG_D(Z)^{(1)}=\bG_D^{(1)}(Z)=\mathsf{D}_n(\mathbf{\Sigma}^0_2(Z))\subseteq\bG=\bG_E(Z)^{(1)},
$$
where the first equality holds by Corollary \ref{movexi}. Therefore $\bG_D(Z)\subseteq\bG_E(Z)$ by Corollary \ref{expansionorder}. An application of Proposition \ref{orderisomorphism} with $W=Z\times Z$ concludes the proof of our claim.

Therefore, we can apply Lemma \ref{closureclosed}, which shows that $B\times C=(B\times Z)\cap (Z\times C)\in\bG_E(Z\times Z)$. Consider the function $(f,g):Z\longrightarrow Z\times Z$ defined by $(f,g)(x)=(f(x),g(x))$, and observe that $(f,g)$ is $\mathbf{\Sigma}^0_2$-measurable. By Lemma \ref{expansionrelativization}.\ref{expansionmeasurablepreimage}, it follows that
$$
A\cap G=(f,g)^{-1}[B\times C]\in\bG_E^{(1)}(Z)=\bL^{(1)}=\bG,
$$
where the second equality holds by Corollary \ref{movexi}.
\end{proof}

\section{Reasonably closed Wadge classes}

In this section we will define reasonably closed Wadge classes and prove that every good Wadge class is reasonably closed. This notion is an ad hoc definition, and it is the key idea of an ingenious lemma due to Harrington (see \cite[Lemma 3]{steela}). This lemma is a crucial ingredient in the proof of Theorem \ref{steeltheorem}. Here, we will follow the approach of \cite[Section 4.1]{vanengelent}.

Given $i\in 2$, set
$$
Q_i=\{x\in 2^\omega:x(n)=i\text{ for all but finitely many }n\in\omega\}.
$$
Notice that every element of $2^\omega\setminus(Q_0\cup Q_1)$ is obtained by alternating finite blocks of zeros and finite blocks of ones. Define the function $\phi:2^\omega\setminus(Q_0\cup Q_1)\longrightarrow 2^\omega$ by setting
$$
\phi(x)(n)=\left\{
\begin{array}{ll} 0 & \textrm{if the $n^{\text{th}}$ block of zeros of $x$ has even length},\\
1 & \textrm{otherwise,}
\end{array}
\right.
$$
where we start counting with the $0^{\text{th}}$ block of zeros. It is easy to check that $\phi$ is continuous.
\begin{definition}
Let $\bG$ be a Wadge class in $2^\omega$. We will say that $\bG$ is \emph{reasonably closed} if $\phi^{-1}[A]\cup Q_0\in\bG$ for every $A\in\bG$.	
\end{definition}

The following result is essentially the same as \cite[Lemma 4.2.17]{vanengelent}, except that it is not limited to the Borel context.

\begin{lemma}\label{goodimpliesreasonablyclosed}
Assume $\AD$. Let $\bG$ be a good Wadge class in $2^\omega$. Then $\bG$ is reasonably closed.
\end{lemma}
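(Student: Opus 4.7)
The plan is to deduce $\phi^{-1}[A]\cup Q_0\in\bG$ directly from the closure properties of good Wadge classes furnished by Theorem \ref{closureGdelta}, namely closure under intersection with $\mathbf{\Pi}^0_2$ sets and union with $\mathbf{\Sigma}^0_2$ sets. The relativization machinery of Section 6 will bridge the gap caused by the fact that $\phi$ is only defined on the proper subset $P:=2^\omega\setminus(Q_0\cup Q_1)$ of $2^\omega$.

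The crucial preliminary observation is that $Q_0\cup Q_1$ is countable, hence belongs to $\mathbf{\Sigma}^0_2(2^\omega)$, so that $P\in\mathbf{\Pi}^0_2(2^\omega)$; similarly $Q_0\in\mathbf{\Sigma}^0_2(2^\omega)$. Now fix $A\in\bG$. By Corollary \ref{hausdorffevery} (this is where $\AD$ enters the picture), there exists $D\subseteq\PP(\omega)$ with $\bG=\bG_D(2^\omega)$. Since $\phi:P\longrightarrow 2^\omega$ is continuous, Lemma \ref{relativization}.\ref{preimage} yields $\phi^{-1}[A]\in\bG_D(P)$, and then Lemma \ref{relativization}.\ref{subspace} furnishes some $A'\in\bG_D(2^\omega)=\bG$ such that $\phi^{-1}[A]=A'\cap P$.

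It then remains to apply Theorem \ref{closureGdelta}, which is available because $\bG$ is good: the first bullet yields $A'\cap P\in\bG$ (since $P\in\mathbf{\Pi}^0_2(2^\omega)$), and then the second bullet yields $\phi^{-1}[A]\cup Q_0=(A'\cap P)\cup Q_0\in\bG$ (since $Q_0\in\mathbf{\Sigma}^0_2(2^\omega)$ and is disjoint from $P$). All the delicate content is already concentrated in Theorem \ref{closureGdelta}; the only real work in the present lemma is to massage $\phi^{-1}[A]\cup Q_0$ into a form to which that closure property directly applies. The only conceivable obstacle, namely that $\phi$ fails to be defined on all of $2^\omega$, is handled cleanly by the subspace clause of Lemma \ref{relativization}, which replaces the ``external'' preimage by a genuine $\bG$-subset of $2^\omega$ cut down by a $\mathbf{\Pi}^0_2$ set.
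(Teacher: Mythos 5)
Your proposal is correct and follows essentially the same route as the paper's own proof: fix $D$ via Corollary \ref{hausdorffevery}, use Lemma \ref{relativization} (preimage and subspace clauses) to realize $\phi^{-1}[A]$ as a $\bG$-set of $2^\omega$ intersected with the $\mathbf{\Pi}^0_2$ set $2^\omega\setminus(Q_0\cup Q_1)$, and then apply Theorem \ref{closureGdelta} twice, for the intersection and for the union with the countable (hence $\mathbf{\Sigma}^0_2$) set $Q_0$. No gaps.
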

\begin{proof}
By Corollary \ref{hausdorffevery}, we can fix $D\subseteq\PP(\omega)$ such that $\bG=\bG_D(2^\omega)$. Set $Z=2^\omega\setminus(Q_0\cup Q_1)$. Pick $A\in\bG$. Notice that $\phi^{-1}[A]\in\bG_D(Z)$ by Lemma \ref{relativization}.\ref{preimage}. Therefore, by Lemma \ref{relativization}.\ref{subspace}, there exists $B\in\bG$ such that $B\cap Z=\phi^{-1}[A]$. Since $\bG$ is a good Wadge class and $Z\in\mathbf{\Pi}^0_2(2^\omega)$, it follows from Theorem \ref{closureGdelta} that $\phi^{-1}[A]\in\bG$. Finally, again by Theorem \ref{closureGdelta}, we see that $\phi^{-1}[A]\cup Q_0\in\bG$, which concludes the proof.
\end{proof}

\section{Wadge classes of homogeneous spaces are good}

The main result of this section is that $[X]$ is a good Wadge class whenever $X$ is a homogeneous space of sufficiently high complexity (see Theorem \ref{classesofhomogeneousspaces} for the precise statement). Together with Lemma \ref{goodimpliesreasonablyclosed}, this will allow us to apply Theorem \ref{steeltheorem} in the next section.

We will need three preliminary results. Lemmas \ref{closedunderhomeomorphisms}, \ref{openimplieswhole}, and \ref{l0impliessomewheresmaller} correspond to  \cite[Lemma 4.2.16]{vanengelent}, \cite[Lemma 4.4.2]{vanengelent}, and \cite[Lemma 4.4.1]{vanengelent} respectively, while Theorem \ref{classesofhomogeneousspaces} corresponds to \cite[Lemma 4.4.3]{vanengelent}. Once again, the difference is that we work with arbitrary sets instead of just Borel sets. In the case of Lemma \ref{l0impliessomewheresmaller}, this yields at the same time a substantially simpler proof, inspired by \cite[proof of Theorem 7.3.10.ii]{louveaub}.

\begin{lemma}\label{closedunderhomeomorphisms}
Assume $\AD$. Let $Z$ be an uncountable zero-dimensional Polish space, and let $\bG$ be a good Wadge class in $Z$. Assume that $A$ and $B$ are subspaces of $Z$ such that $B\in\bG$ and $A\approx B$. Then $A\in\bG$.
\end{lemma}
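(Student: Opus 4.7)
The plan is to use Lavrentiev's theorem to extend the given homeomorphism $h:A\longrightarrow B$ to a homeomorphism between $\mathbf{\Pi}^0_2$ subsets of $Z$, then transfer $B$'s membership in $\bG$ across the extended homeomorphism, and finally use the closure of good Wadge classes under intersection with $\mathbf{\Pi}^0_2$ sets (which is exactly Theorem \ref{closureGdelta}).

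More precisely, since $Z$ is Polish, Lavrentiev's theorem yields $G_A, G_B\in\mathbf{\Pi}^0_2(Z)$ with $A\subseteq G_A$, $B\subseteq G_B$, and a homeomorphism $\tilde{h}:G_A\longrightarrow G_B$ extending $h$. Because $h[A]=B$ and $\tilde{h}$ is a bijection between $G_A$ and $G_B$, it follows that $\tilde{h}^{-1}[B]=A$. By Corollary \ref{hausdorffevery} fix $D\subseteq\PP(\omega)$ with $\bG=\bG_D(Z)$. From $B\in\bG$ and $B=B\cap G_B$, Lemma \ref{relativization}.\ref{subspace} gives $B\in\bG_D(G_B)$. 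Then Lemma \ref{relativization}.\ref{homeomorphism} applied to $\tilde{h}$ yields $A=\tilde{h}^{-1}[B]\in\bG_D(G_A)$.

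A second application of Lemma \ref{relativization}.\ref{subspace} produces some $A'\in\bG_D(Z)=\bG$ with $A'\cap G_A=A$. Since $\bG$ is good, Theorem \ref{closureGdelta} applies and gives $A=A'\cap G_A\in\bG$, as desired.

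The only real step that requires care is the invocation of Lavrentiev's theorem to obtain $\tilde{h}$; everything else is just a bookkeeping exercise in the relativization machinery developed in Section 6, together with the newly established $\mathbf{\Pi}^0_2$-closure property of good classes. I do not expect any serious obstacle: the whole point of the notion of a good Wadge class, and of the preparatory work culminating in Theorem \ref{closureGdelta}, is precisely to make Lavrentiev-style arguments of this flavor go through for arbitrary (not necessarily Borel) complexity.
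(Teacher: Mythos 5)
Your proposal is correct and follows essentially the same route as the paper: the paper also extends $h$ to a homeomorphism between $\mathbf{\Pi}^0_2$ sets via Lavrentiev's theorem (cited as \cite[Theorem 3.9]{kechris}), transfers membership in $\bG_D$ using Lemma \ref{relativization}, and finishes with Theorem \ref{closureGdelta}. The only cosmetic difference is that you invoke Lemma \ref{relativization}.\ref{homeomorphism} where the paper uses Lemma \ref{relativization}.\ref{preimage}, which changes nothing.
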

\begin{proof}
Let $h:A\longrightarrow B$ be a homeomorphism. By \cite[Theorem 3.9]{kechris}, we can fix $G,H\in\mathbf{\Pi}^0_2(Z)$ and a homeomorphism $f:G\longrightarrow H$ such that $h\subseteq f$. By Corollary \ref{hausdorffevery}, we can fix $D\subseteq\PP(\omega)$ such that $\bG=\bG_D(Z)$. Notice that $B\in\bG_D(H)$ by Lemma \ref{relativization}.\ref{subspace}. It follows from Lemma \ref{relativization}.\ref{preimage} that $A\in\bG_D(G)$. Therefore, according to Lemma \ref{relativization}.\ref{subspace}, there exists $C\in\bG_D(Z)$ such that $C\cap G=A$. Since $G\in\mathbf{\Pi}^0_2(Z)$, an application of Theorem \ref{closureGdelta} concludes the proof.
\end{proof}

\begin{lemma}\label{openimplieswhole}
Assume $\AD$. Let $Z$ be an uncountable zero-dimensional Polish space, let $\bG$ be a good Wadge class in $Z$, and let $X$ be a homogeneous subspace of $Z$. Assume that $A\in\mathbf{\Sigma}^0_1(X)$ is non-empty and $A\in\bG$. Then $X\in\bG$.
\end{lemma}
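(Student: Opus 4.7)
The plan is to combine homogeneity of $X$, closure of $\bG$ under intersection with $\mathbf{\Pi}^0_2$ (Theorem \ref{closureGdelta}), and the partitioned-union identity $\PU_1(\bG) = \bG$ coming from $\ell(\bG) \geq 1$, in order to exhibit $X$ as a $\mathbf{\Delta}^0_2(Z)$-partitioned union of $\bG$-sets.

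First, I would replace $A$ by a non-empty clopen-in-$X$ subset $A' \subseteq A$, available by zero-dimensionality of $X$. Since such an $A'$ is locally closed in $Z$, one has $A' = A \cap V$ for some $V \in \mathbf{\Delta}^0_2(Z) \subseteq \mathbf{\Pi}^0_2(Z)$, and Theorem \ref{closureGdelta} gives $A' \in \bG$, so we may assume $A$ itself is clopen in $X$. Next, fix $a \in A$ and for each $x \in X$ pick a self-homeomorphism $h_x$ of $X$ with $h_x(a) = x$; the sets $h_x[A]$ then form a clopen-in-$X$ cover of $X$ by copies of $A$, from which separability produces a countable subcover $(A_n)_{n \in \omega}$. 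Setting $B_n = A_n \setminus \bigcup_{k<n} A_k$ yields pairwise disjoint clopen-in-$X$ sets partitioning $X$. Each $B_n$ is homeomorphic to the clopen subset $h_{x_n}^{-1}[B_n]$ of $A$, which again has the form $A \cap V'$ with $V' \in \mathbf{\Delta}^0_2(Z)$ and hence belongs to $\bG$ by Theorem \ref{closureGdelta}; Lemma \ref{closedunderhomeomorphisms} then yields $B_n \in \bG$.

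The main obstacle is to produce pairwise disjoint $W_n \in \mathbf{\Delta}^0_2(Z)$ covering $Z$ with $W_n \cap X = B_n$ for every $n$. Extending the continuous map $\pi : X \longrightarrow \omega$ sending $B_n$ to $n$ by a naive Lavrentiev argument would produce only $\mathbf{\Pi}^0_2$ pieces, so I would bypass Lavrentiev altogether: choose open $U_n \subseteq Z$ with $B_n = X \cap U_n$ and set $W_n = U_n \setminus \bigcup_{k<n} U_k$ for $n \geq 1$, together with $W_0 = Z \setminus \bigcup_{n \geq 1} W_n$. Each $W_n$ with $n \geq 1$ is open intersected with closed, hence locally closed, and the union $\bigsqcup_{n \geq 1} W_n$ coincides with $(\bigcup_n U_n) \setminus U_0$, itself locally closed, forcing $W_0 \in \mathbf{\Delta}^0_2(Z)$. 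Pairwise disjointness of the $B_k$'s in $X$ then forces $W_n \cap X = B_n$ throughout, and since $B_n \subseteq W_n$, one obtains $\bigcup_n (B_n \cap W_n) = \bigsqcup_n B_n = X$, exhibiting $X$ as an element of $\PU_1(\bG) = \bG$.
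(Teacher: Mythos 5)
Your argument is correct and takes essentially the same route as the paper: homogeneity plus separability give a countable cover of $X$ by homeomorphic copies of $A$, these lie in $\bG$ by Lemma \ref{closedunderhomeomorphisms}, and disjointifying open extensions of them in $Z$ yields a partition of $Z$ into $\mathbf{\Delta}^0_2(Z)$ sets exhibiting $X$ as an element of $\PU_1(\bG)=\bG$. The paper's proof skips your preparatory reductions, since they are not needed: extending the (not necessarily disjoint) open-in-$X$ copies $A_n$ of $A$ to open sets $U_n\subseteq Z$ and setting $V_n=U_n\setminus\bigcup_{k<n}U_k$ already gives $V_n\cap A_n=V_n\cap X$, so one never needs clopen or pairwise disjoint traces, the two extra appeals to Theorem \ref{closureGdelta} disappear, and only countably many choices are made (note also that, as literally stated, your claim that a clopen-in-$X$ set $A'$ is locally closed in $Z$ is inaccurate, though the consequence you actually use, namely $A'=A\cap V$ with $V\in\mathbf{\Delta}^0_2(Z)$, is correct).
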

\begin{proof}
Define $\UU=\{h[A]:h\text{ is a homeomorphism of }X\}$. Notice that $\UU$ is a cover of $X$ because $X$ is homogeneous and $A$ is non-empty. Let $\{A_n:n\in\omega\}$ be a countable subcover of $\UU$. Observe that each $A_n\in\bG$ by Lemma \ref{closedunderhomeomorphisms}. Fix $U_n\in\mathbf{\Sigma}^0_1(Z)$ for $n\in\omega$ such that $U_n\cap X=A_n$ for each $n$. Set $V_n=U_n\setminus\bigcup_{k<n}U_k$ for $n\in\omega$, and observe that  $V_n\in\mathbf{\Delta}^0_2(Z)$ for each $n$. Furthermore, it is easy to check that
$$
X=\bigcup_{-1\leq n<\omega}(V_n\cap A_n),
$$
where $V_{-1}=Z\setminus\bigcup_{n<\omega}V_n=Z\setminus\bigcup_{n<\omega}U_n$ and $A_{-1}=\varnothing$. In conclusion, we see that $X\in\PU_1(\bG)$. Since $\ell(\bG)\geq 1$, it follows that $X\in\bG$.
\end{proof}

\begin{lemma}\label{l0impliessomewheresmaller}
Assume $\AD$. Let $Z$ be an uncountable zero-dimensional Polish space, let $\bG\in\NSD(Z)$ be such that $\ell(\bG)=0$, and let $X\in\bG$ be codense in $Z$. Then there exists a non-empty $U\in\mathbf{\Delta}^0_1(Z)$ and $\bL\in\NSD(Z)$ such that $\bL\subsetneq\bG$ and $X\cap U\in\bL$.	
\end{lemma}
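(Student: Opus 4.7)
The plan is a direct construction. Using $\ell(\bG)=0$ together with Wadge's Lemma I will rewrite $X$ as a partitioned union whose pieces lie in $\bGc$; Baire category will then single out a clopen $U$ on which $X\cap U$ automatically falls into $\Delta(\bG)$; finally, a selfdual analysis via Corollary \ref{selfdualcorollary} refines $U$ to produce the desired non-selfdual class strictly smaller than $\bG$.

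Since $\ell(\bG)=0$, $\PU_1(\bG)\neq\bG$, so fix $A=\bigcup_{n\in\omega}(A_n\cap V_n)\notin\bG$ with $A_n\in\bG$ and $(V_n:n\in\omega)$ a pairwise disjoint $\mathbf{\Delta}^0_2(Z)$-partition of $Z$. Since $X\in\bG$ and $\bG$ is continuously closed, $A\leq X$ would place $A$ in $[X]\subseteq\bG$, a contradiction; hence $A\not\leq X$, and Lemma \ref{wadgelemma} yields a continuous $f:Z\to Z$ with $f^{-1}[Z\setminus A]=X$. Writing $Z\setminus A=\bigcup_n((Z\setminus A_n)\cap V_n)$ and pulling back along $f$ gives
$$X=\bigcup_{n\in\omega}(C_n\cap W_n),$$
with $C_n=f^{-1}[Z\setminus A_n]\in\bGc$ and $W_n=f^{-1}[V_n]\in\mathbf{\Delta}^0_2(Z)$ a pairwise disjoint partition of $Z$; in particular $X\cap W_n=C_n\cap W_n$ for every $n$.

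Next I apply Baire category. Each $W_n\in\mathbf{\Sigma}^0_2(Z)$ decomposes as a countable union of closed sets $F_{n,k}$, so $Z=\bigcup_{n,k}F_{n,k}$ is a countable union of closed sets in the Polish space $Z$; the Baire category theorem produces some $F_{n_0,k_0}$ with non-empty interior, and zero-dimensionality of $Z$ lets me pick a non-empty clopen $U$ inside that interior, so that $U\subseteq F_{n_0,k_0}\subseteq W_{n_0}$. Then $X\cap U=C_{n_0}\cap U$. Codensity of $X$ rules out $\bG=\{Z\}$ and the hypothesis $\ell(\bG)=0$ rules out $\bG=\{\varnothing\}$ (since $\ell(\{\varnothing\})=\omega_1$), so two applications of Proposition \ref{closureclopen} give $X\cap U\in\bG$ and $C_{n_0}\cap U\in\bGc$; thus $X\cap U\in\Delta(\bG)$ and $[X\cap U]\subseteq\Delta(\bG)\subsetneq\bG$.

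If $[X\cap U]$ is non-selfdual, take $\bL=[X\cap U]$ and we are done. Otherwise Corollary \ref{selfdualcorollary}, applied with $V=U$ and $A=X\cap U$, furnishes pairwise disjoint $W'_k\in\mathbf{\Delta}^0_1(U)\subseteq\mathbf{\Delta}^0_1(Z)$ covering $U$ and non-selfdual $A'_k<X\cap U$ in $Z$ with $X\cap W'_k=A'_k\cap W'_k$. Picking $k$ with $W'_k\neq\varnothing$ and setting $\bL=[A'_k]$ gives $\bL\in\NSD(Z)$ with $\bL\subsetneq[X\cap U]\subseteq\bG$, and Proposition \ref{closureclopen} yields $X\cap W'_k\in\bL$; replacing $U$ by $W'_k$ closes the argument. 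I do not foresee a serious obstacle; the only delicate point is the Baire step, where the $\mathbf{\Delta}^0_2$-pieces $W_n$ must first be broken into their closed components $F_{n,k}$ before Baire category can supply a clopen $U$ contained in some $W_{n_0}$. A small amount of additional bookkeeping handles degenerate cases (for instance $[A'_k]=\{Z\}$ in the selfdual decomposition, resolved by replacing $\bL$ with the non-selfdual subclass $\mathbf{\Sigma}^0_1(Z)\subsetneq\bG$, or the trivial situations $\bG\in\{\mathbf{\Sigma}^0_1(Z),\mathbf{\Pi}^0_1(Z)\}$ where codensity of $X$ alone already produces a clopen $U$ with $X\cap U=\varnothing$).
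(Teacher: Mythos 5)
Your proof is correct and takes essentially the same route as the paper's: the hypothesis $\ell(\bG)=0$ together with Lemma \ref{wadgelemma} yields a partitioned union over a $\mathbf{\Delta}^0_2$ partition (you write $X$ as a partitioned union of $\bGc$-sets, the paper equivalently writes $Z\setminus X$ as a partitioned union of $\bG$-sets), Baire category produces a clopen $U$ inside one piece so that $X\cap U\in\Delta(\bG)$, and the selfdual case is handled by Corollary \ref{selfdualcorollary} and Proposition \ref{closureclopen}. The only cosmetic difference is your bookkeeping for the degenerate case $A'_k=Z$: as in the paper, codensity of $X$ already makes a non-empty $W'_k\subseteq X$ impossible, so the $\mathbf{\Sigma}^0_1(Z)$ replacement and the separate treatment of $\bG\in\{\mathbf{\Sigma}^0_1(Z),\mathbf{\Pi}^0_1(Z)\}$ are unnecessary.
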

\begin{proof}
Since $\ell(\bG)=0$, using Lemma \ref{wadgelemma} and the fact that $\mathsf{PU}_1(\bG)$ is continuously closed, it is easy to see that $\bGc\subseteq\mathsf{PU}_1(\bG)$. Therefore, we can fix $A_n\in\bG$ and pairwise disjoint $V_n\in\mathbf{\Delta}^0_2(Z)$ for $n\in\omega$ such that $\bigcup_{n\in\omega}(A_n\cap V_n)=Z\setminus X$. Since $Z$ is a Baire space, we can fix $n\in\omega$ and a non-empty $U\in\mathbf{\Delta}^0_1(Z)$ such that $U\subseteq V_n$.

Notice that $\bG\neq\{Z\}$ and $\bG\neq\{\varnothing\}$ because $\ell(\bG)=0$, hence it is possible to apply Proposition \ref{closureclopen}. In particular, one sees that $U\setminus X=A_n\cap U\in\bG$, hence $Z\setminus(X\cap U)=(Z\setminus U)\cup (U\setminus X)\in\bG$. So, we have $X\cap U\in\bG$ (again by Proposition \ref{closureclopen}) and $Z\setminus(X\cap U)\in\bG$. This easily yields the desired result if $X\cap U$ is non-selfdual, so assume that $X\cap U$ is selfdual. By Corollary \ref{selfdualcorollary}, we can fix pairwise disjoint $U_n\in\mathbf{\Delta}^0_1(U)$ and non-selfdual $B_n<X\cap U$ in $Z$ for $n\in\omega$ such that $\bigcup_{n\in\omega}U_n=U$ and $\bigcup_{n\in\omega}(B_n\cap U_n)=X\cap U$. If we had $B_n=Z$ for each $n$ such that $U_n\neq\varnothing$ then the assumption that $X$ is codense in $Z$ would be contradicted, so assume that $n\in\omega$ is such that $B_n\neq Z$ and $U_n\neq\varnothing$. To conclude the proof, set $\bL=[B_n]$ and observe that $X\cap U_n=B_n\cap U_n\leq B_n$ by Proposition \ref{closureclopen}.
\end{proof}

\begin{theorem}\label{classesofhomogeneousspaces}
Assume $\AD$. Let $Z$ be an uncountable zero-dimensional Polish space, and let $X$ be a homogeneous dense subspace of $Z$ such that $X\notin\Delta(\mathsf{D}_\omega(\mathbf{\Sigma}^0_2(Z)))$. Then $[X]$ is a good Wadge class in $Z$.
\end{theorem}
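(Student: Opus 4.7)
The plan is to verify the three defining conditions of a good Wadge class for $[X]$: non-selfduality, the inclusion $\Delta(\mathsf{D}_\omega(\mathbf{\Sigma}^0_2(Z)))\subseteq [X]$, and $\ell([X])\geq 1$. Before addressing any of these, I first reduce to the case where $X$ is codense in $Z$. If it is not, then some non-empty clopen subset of $Z$ lies entirely in $X$, supplying a non-empty Polish open subset of $X$; Proposition \ref{somewherepolish} then forces $X$ to be Polish, hence a $G_\delta$ in $Z$ (as a dense Polish subspace). This would place $X$ in $\mathbf{\Pi}^0_2(Z)\subseteq\mathsf{D}_\omega(\mathbf{\Sigma}^0_2(Z))$ and $Z\setminus X$ in $\mathbf{\Sigma}^0_2(Z)\subseteq\mathsf{D}_\omega(\mathbf{\Sigma}^0_2(Z))$, forcing $X\in\Delta(\mathsf{D}_\omega(\mathbf{\Sigma}^0_2(Z)))$ against the hypothesis. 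The inclusion $\Delta(\mathsf{D}_\omega(\mathbf{\Sigma}^0_2(Z)))\subseteq[X]$ then drops out of Lemma \ref{wadgelemma}: given $A$ in this continuously closed selfdual class, the alternative $Z\setminus X\leq A$ would place $X$ inside it, which is excluded, so $A\leq X$.

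For non-selfduality I argue by contradiction. Suppose $[X]$ is selfdual. Corollary \ref{selfdualcorollary} applied with $V=Z$ produces a clopen partition $\{V_n:n\in\omega\}$ of $Z$ and non-selfdual $A_n<X$ with $X\cap V_n=A_n\cap V_n$; by density of $X$, each non-empty $X\cap V_n$ is a non-empty clopen subspace of $X$ and lies in $[A_n]\subsetneq[X]$. Because $\Delta(\mathsf{D}_\omega(\mathbf{\Sigma}^0_2(Z)))$ is closed under partitioned unions along clopen partitions, not every $X\cap V_n$ can lie in it (otherwise $X$ would), so I fix $n$ with $X\cap V_n\notin\Delta(\mathsf{D}_\omega(\mathbf{\Sigma}^0_2(Z)))$ and set $\bL=[A_n]$. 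The Wadge-lemma argument of the previous paragraph, now applied to $X\cap V_n\in\bL$, yields $\Delta(\mathsf{D}_\omega(\mathbf{\Sigma}^0_2(Z)))\subseteq\bL$. Granted that $\bL$ is good, Lemma \ref{openimplieswhole} applied to $X\cap V_n$ (a non-empty open subset of the homogeneous $X$) produces $X\in\bL$, contradicting $A_n<X$. The argument for $\ell([X])\geq 1$ is entirely parallel: assuming $\ell([X])=0$, Lemma \ref{l0impliessomewheresmaller} (using codensity of $X$ and the non-selfduality just established) supplies a non-empty clopen $U\subseteq Z$ and a non-selfdual $\bL\subsetneq[X]$ with $X\cap U\in\bL$; granted $\bL$ is good, Lemma \ref{openimplieswhole} again yields $X\in\bL$, giving the contradiction $[X]\subseteq\bL\subsetneq[X]$.

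The main obstacle in both of the last two arguments is verifying that the intermediate class $\bL$ is itself good. Non-selfduality of $\bL$ is automatic, and $\Delta(\mathsf{D}_\omega(\mathbf{\Sigma}^0_2(Z)))\subseteq\bL$ follows by the same Wadge-lemma argument once the selected piece ($X\cap V_n$ or $X\cap U$) is chosen outside $\Delta(\mathsf{D}_\omega(\mathbf{\Sigma}^0_2(Z)))$, which the closure of $\Delta(\mathsf{D}_\omega(\mathbf{\Sigma}^0_2(Z)))$ under clopen partitioned unions makes possible. The delicate condition is $\ell(\bL)\geq 1$. I expect to handle this by iteration: if $\ell(\bL)=0$, another application of Lemma \ref{l0impliessomewheresmaller} inside $\bL$ to the codense piece $X\cap W$ of $Z$ (which remains in $\bL$) produces a strictly smaller non-selfdual class $\bL'\subsetneq\bL$ together with a further non-empty clopen piece of $X$ lying in $\bL'$ (the piece is non-empty by the density half of the dichotomy $X$ satisfies). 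Theorem \ref{wellfounded} bounds this descent, so termination must occur at a class $\bL^\ast$ of positive level, which is then good by the previous remarks; applying Lemma \ref{openimplieswhole} at that stage yields the required contradiction and closes the argument.
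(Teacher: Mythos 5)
Your overall architecture is close to the paper's (the paper replaces your explicit descent by choosing, at the outset, a minimal non-selfdual class $\bG$ admitting some non-empty clopen $U$ with $X\cap U\in\bG\cup\bGc$; minimality and iterated descent are interchangeable via Theorem \ref{wellfounded}), and your reduction to the codense case, the easy inclusion $\Delta(\mathsf{D}_\omega(\mathbf{\Sigma}^0_2(Z)))\subseteq[X]$ via Lemma \ref{wadgelemma}, and the use of Corollary \ref{selfdualcorollary} and Lemma \ref{l0impliessomewheresmaller} are all in the right place. But the step you yourself flag as delicate has a genuine gap. In your iteration you apply Lemma \ref{l0impliessomewheresmaller} in the ambient space $Z$ to the set $X\cap W$, and the lemma then hands you \emph{some} non-empty $U\in\mathbf{\Delta}^0_1(Z)$ with $(X\cap W)\cap U$ in a strictly smaller class --- with no control whatsoever on where $U$ sits. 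If $U\cap W=\varnothing$, the new ``piece'' is $\varnothing$; density of $X$ only guarantees $X\cap V\neq\varnothing$ for non-empty open $V$, and here the relevant set is $W\cap U$, which may be empty. So the descent can terminate at a class $\bL^\ast$ of positive level while the trace of $X$ you are tracking has evaporated, and the final application of Lemma \ref{openimplieswhole} (which requires a \emph{non-empty} open subset of the homogeneous space $X$) has nothing to act on; no contradiction is reached. Since both your non-selfduality argument and your level argument route through this descent, both inherit the gap.

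This is exactly the point where the paper does its real work: it applies Lemma \ref{l0impliessomewheresmaller} \emph{inside} the clopen set $U$, viewed as the ambient Polish space, so that the new clopen set $V$ automatically lies in $U$ and $X\cap V\neq\varnothing$ by density. That relativization is not free: one must first check that $U$ is uncountable (otherwise $X$ would be countable by the homogeneity argument of Proposition \ref{somewherepolish}, putting $X$ in $\Delta(\mathsf{D}_\omega(\mathbf{\Sigma}^0_2(Z)))$), transfer $\ell=0$ from $\bG_D(Z)$ to $\bG_D(U)$ via Corollary \ref{levelinvariance}, and then pull the strictly smaller class found in $U$ back to $Z$ using Corollary \ref{hausdorffevery}, Proposition \ref{orderisomorphism}, Lemma \ref{relativization} and Proposition \ref{closureclopen} before invoking minimality. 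Either you prove a strengthening of Lemma \ref{l0impliessomewheresmaller} in which $U$ can be found inside a prescribed non-empty clopen set (which requires reopening its proof, not citing it as a black box), or you relativize as the paper does; as written, the citation does not deliver what you need. A smaller, fixable point: in the level argument your device for ensuring the selected piece lies outside $\Delta(\mathsf{D}_\omega(\mathbf{\Sigma}^0_2(Z)))$ (closure under clopen partitioned unions) is unavailable, since there is no partition there --- the correct move, as in the paper, is that if the piece did lie in $\Delta(\mathsf{D}_\omega(\mathbf{\Sigma}^0_2(Z)))$, then applying Lemma \ref{openimplieswhole} twice, with the good classes $\mathsf{D}_\omega(\mathbf{\Sigma}^0_2(Z))$ and its dual, would force $X\in\Delta(\mathsf{D}_\omega(\mathbf{\Sigma}^0_2(Z)))$, contradicting the hypothesis; but note this again presupposes the piece is non-empty, so it does not rescue the main gap.
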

\begin{proof}
Fix $\bG\in\NSD(Z)$ minimal with respect to the property that $X\cap U\in\bG\cup\bGc$ for some non-empty $U\in\mathbf{\Delta}^0_1(Z)$. Fix a non-empty $U\in\mathbf{\Delta}^0_1(Z)$ such that $X\cap U\in\bG\cup\bGc$. Assume without loss of generality that $X\cap U\in\bG$ (the case $X\cap U\in\bGc$ is similar). First we will prove that $\bG$ is a good Wadge class, then we will show that $[X]=\bG$. Observe that $\mathsf{D}_\omega(\mathbf{\Sigma}^0_2(Z))$ and $\check{\mathsf{D}}_\omega(\mathbf{\Sigma}^0_2(Z))$ are good Wadge classes in $Z$ by Proposition \ref{differencesgood}. We claim that $X\cap U\notin\Delta(\mathsf{D}_\omega(\mathbf{\Sigma}^0_2(Z)))$. Assume, in order to get a contradiction, that $X\cap U\in\Delta(\mathsf{D}_\omega(\mathbf{\Sigma}^0_2(Z)))$. Then, by the density of $X$, it is possible to apply Lemma \ref{openimplieswhole} (twice), obtaining that $X\in\Delta(\mathsf{D}_\omega(\mathbf{\Sigma}^0_2(Z)))$. Since this contradicts our assumptions, our claim is proved. By Lemma \ref{wadgelemma}, it follows that $\Delta(\mathsf{D}_\omega(\mathbf{\Sigma}^0_2(Z)))\subseteq\bG$.

Next, we claim that $\ell(\bG)\geq 1$. Assume, in order to get a contradiction, that $\ell(\bG)=0$. By Corollary \ref{hausdorffevery}, we can fix $D\subseteq\PP(\omega)$ such that $\bG_D(Z)=\bG$. Since $X$ is dense in $Z$ and homogeneous, if $U$ were countable then $X$ would be countable, by the same argument as in the proof of Proposition \ref{somewherepolish}. So $U$ is an uncountable zero-dimensional Polish space, and $\ell(\bG_D(U))=0$ by Corollary \ref{levelinvariance}. Furthermore, $X$ must be codense in $Z$, otherwise it would follow that $X$ is Polish by Proposition \ref{somewherepolish}, hence $X\in\mathbf{\Pi}^0_2(Z)$ by \cite[Theorem 3.11]{kechris}. Therefore, by Lemma \ref{l0impliessomewheresmaller}, there exists a non-empty $V\in\mathbf{\Delta}^0_1(U)$ and $\bL\in\NSD(U)$ such that $\bL\subsetneq\bG_D(U)$ and $X\cap V\in\bL$. By Corollary \ref{hausdorffevery}, we can fix $E\subseteq\PP(\omega)$ such that $\bG_E(U)=\bL$. Observe that $\bG_E(Z)\subsetneq\bG_D(Z)$ by Proposition \ref{orderisomorphism}. Therefore, in order to contradict the minimality of $\bG$, it remains to show that $X\cap V\in\bG_E(Z)$. By Lemma \ref{relativization}.\ref{subspace}, there exists $A\in\bG_E(Z)$ such that $A\cap U=X\cap V$. Notice that $\bG_E(Z)\neq\{Z\}$, otherwise it would follow that $X=Z$, which contradicts our assumptions. Therefore $X\cap V=A\cap U\in\bG_E(Z)$ by Proposition \ref{closureclopen}.

At this point, we know that $\bG$ is a good Wadge class, so we can apply Lemma \ref{openimplieswhole}, obtaining that $X\in\bG$. To conclude the proof, it will be enough to show that $X$ is non-selfdual, as it will follow from the minimality of $\bG$ and Proposition \ref{closureclopen} that $[X]=\bG$. Assume, in order to get a contradiction, that $X$ is selfdual. Then, by Corollary \ref{selfdualcorollary}, there exist a non-empty $V\in\mathbf{\Delta}^0_1(Z)$ and a non-selfdual $A < X$ in $Z$ such that $A\cap V=X\cap V$. Set $\bL=[A]$, and observe that $\bL\subsetneq\bG$. Notice that $\bL\neq\{Z\}$, otherwise it would follow that $V\subseteq X$, hence $X$ would not be codense in $Z$. Therefore $X\cap V=A\cap V\in\bL$ by Proposition \ref{closureclopen}. This contradicts the minimality of $\bG$.
\end{proof}

\section{The main results}

This sections contains our main results. Theorem \ref{uniqueness} extends (and is inspired by) \cite[Lemma 2.7]{vanengelena}. All the work done so far was aimed at applying the following result, which is a particular case of \cite[Theorem 2]{steela}. Given a Wadge class $\bG$ in $2^\omega$ and $X\subseteq 2^\omega$, we will say that $X$ is \emph{everywhere properly $\bG$} if $X\cap [s]\in\bG\setminus\bGc$ for every $s\in 2^{<\omega}$.

\begin{theorem}[Steel]\label{steeltheorem}
Assume $\AD$. Let $\bG$ be a reasonably closed Wadge class in $2^\omega$. Assume that $X$ and $Y$ are subsets of $2^\omega$ that satisfy the following conditions:
\begin{itemize}
\item $X$ and $Y$ are everywhere properly $\bG$,
\item $X$ and $Y$ are either both meager in $2^\omega$ or both comeager in $2^\omega$.
\end{itemize}
Then there exists a homeomorphism $h:2^\omega\longrightarrow 2^\omega$ such that $h[X]=Y$.
\end{theorem}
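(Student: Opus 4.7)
The plan is to run a Steel-style determinacy game that simultaneously builds the homeomorphism and monitors the $X/Y$-membership of the resulting points. More precisely, one would define a two-player game $G(X,Y)$ in which the players alternately produce finer and finer clopen partitions of $2^\omega$ together with a prescribed bijection between corresponding pieces; the unique homeomorphism $h:2^\omega\to 2^\omega$ determined by these matched partitions (requiring the diameters to shrink to zero) constitutes a win for Player~II iff $h[X]=Y$. By $\AD$ the game is determined, and a winning strategy for Player~II yields the desired homeomorphism at once. Hence the entire content of the argument is to rule out a winning strategy for Player~I.

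To refute a putative winning strategy $\sigma$ for Player~I, the plan is to invoke Harrington's Lemma (\cite[Lemma 3]{steela}). Given $\sigma$, one uses it together with the coding map $\phi:2^\omega\setminus(Q_0\cup Q_1)\to 2^\omega$ from Section~13 to manufacture from $X$ a set of the shape $\phi^{-1}[A]\cup Q_0$ (with $A\in\bG$ obtained by running $\sigma$ against partial plays arising from $X$) that must simultaneously lie in $\bG$ and be $\bGc$ on some nonempty clopen. Reasonable closedness is what supplies the first membership, while the winning property of $\sigma$ --- combined with the meager/comeager hypothesis that forces Baire-category types on the two sides of the game to match up --- supplies the second. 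The two together contradict the assumption that $X$ is everywhere properly $\bG$.

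The main obstacle will be orchestrating the back-and-forth inside the game so that the local matching between pieces $U_s\cap X$ and $V_t\cap Y$ is always possible, which is precisely where the global category hypothesis is indispensable: the van Douwen example (Theorem~\ref{vandouwentheorem}) shows that without it the construction collapses. A secondary but also delicate point will be ensuring that the splicing via $\phi$ and $Q_0$ is sufficiently uniform across the game tree to produce a single set lying in $\bG$ at the end --- this is where the combinatorics of the alternating blocks of zeros and ones in $\phi$, together with the role of $Q_0$ in absorbing the ``ambiguous'' limits from Section~13, must be handled with care.
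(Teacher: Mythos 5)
This statement is not proved in the paper at all: it is quoted as a particular case of Steel's Theorem 2 in \cite{steela}, and all the surrounding work (reasonable closure, good Wadge classes) exists precisely so that this black box can be applied. So there is no internal proof to compare with; the question is whether your sketch would actually reprove Steel's result, and as written it would not. Your first step, reducing everything to refuting a winning strategy for Player I in a single game whose payoff is ``the jointly built homeomorphism $h$ satisfies $h[X]=Y$,'' is already doubtful as a design: in Steel's argument the homeomorphism is not obtained as a winning strategy in one global game, but is constructed by a back-and-forth recursion on clopen pieces, where determinacy enters locally, through Wadge's Lemma and through Harrington's Lemma (\cite[Lemma 3]{steela}), to guarantee at each stage that a piece $[s]\cap X$ can be matched with a piece $[t]\cap Y$ up to a controllable error. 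In your symmetric game both players contribute to the same partition--bijection data, and nothing in the sketch explains why a winning strategy for Player I would encode any Wadge-theoretic information about $X$ and $Y$ rather than mere sabotage of the bookkeeping; the dichotomy ``II wins or I wins'' is only useful if the game is engineered so that the I-wins horn is refutable, and that engineering is exactly what is absent.

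The second paragraph, which is supposed to carry the whole weight, is a list of ingredients rather than an argument. You assert that from $\sigma$ one can ``manufacture'' a set $\phi^{-1}[A]\cup Q_0$ lying in $\bG$ (by reasonable closure) and simultaneously force some $X\cap[s]$ or $Y\cap[t]$ into $\bGc$, contradicting ``everywhere properly $\bG$,'' but you give no mechanism: how $A$ is read off from runs of $\sigma$, which Wadge-type game produces the reduction showing membership in $\bGc$, and where exactly the meager/comeager hypothesis is consumed are all left open. In Steel's proof the category hypothesis has a concrete role --- the back-and-forth can only arrange $h[X]$ and $Y$ to agree modulo a meager error set, and the sets $Q_0,Q_1$ together with reasonable closure are what allow this error to be absorbed without leaving $\bG$; your sketch gestures at this (``absorbing the ambiguous limits'') but proves none of it. Invoking Harrington's Lemma is legitimate, since it is stated and proved in \cite{steela}, but even granting it, the orchestration you describe does not amount to a proof; to complete your plan you would essentially have to reproduce Steel's back-and-forth construction, at which point you are no longer giving an alternative argument but an incomplete paraphrase of the original one.
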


\begin{theorem}\label{uniqueness}
Assume $\AD$. Let $X$ and $Y$ be homogeneous dense subspaces of $2^\omega$. Assume that $X\notin\Delta(\mathsf{D}_\omega(\mathbf{\Sigma}^0_2(2^\omega)))$, and that the following conditions are satisfied:
\begin{itemize}
\item $[X]=[Y]$,
\item $X$ and $Y$ are either both meager spaces or both Baire spaces.
\end{itemize}
Then there exists a homeomorphism $h:2^\omega\longrightarrow 2^\omega$ such that $h[X]=Y$.
\end{theorem}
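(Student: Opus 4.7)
The plan is to apply Steel's theorem (Theorem \ref{steeltheorem}) with the Wadge class $\bG=[X]$. First I would set $\bG=[X]$ and invoke Theorem \ref{classesofhomogeneousspaces} to conclude that $\bG$ is a good Wadge class in $2^\omega$, and then Lemma \ref{goodimpliesreasonablyclosed} to conclude that $\bG$ is reasonably closed. Since $[Y]=[X]=\bG$ and the selfdual class $\Delta(\mathsf{D}_\omega(\mathbf{\Sigma}^0_2(2^\omega)))$ is closed under Wadge-reduction, the hypothesis $X\notin\Delta(\mathsf{D}_\omega(\mathbf{\Sigma}^0_2(2^\omega)))$ transfers automatically to $Y$, so the analogous statements hold for $Y$.

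Next I would verify that $X$ is everywhere properly $\bG$. Fix $s\in 2^{<\omega}$. Since $2^\omega\in\Delta(\mathsf{D}_\omega(\mathbf{\Sigma}^0_2(2^\omega)))$ we have $X\neq 2^\omega$ and hence $\bG\neq\{2^\omega\}$; Proposition \ref{closureclopen} then yields $X\cap [s]\in\bG$. To rule out $X\cap [s]\in\bGc$, I would first observe that $\bGc$ is itself a good Wadge class: non-selfduality is symmetric, $\Delta(\mathsf{D}_\omega(\mathbf{\Sigma}^0_2(2^\omega)))$ is selfdual (so its inclusion in $\bG$ transfers to $\bGc$), and the identity $\PU_\xi(\bGc)=\widecheck{\PU_\xi(\bG)}$, which is an immediate consequence of the fact that the $V_n$ in a partitioned union cover $2^\omega$, gives $\ell(\bGc)=\ell(\bG)\geq 1$. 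By density of $X$ in $2^\omega$, $X\cap [s]$ is a non-empty element of $\mathbf{\Sigma}^0_1(X)$, so if it belonged to $\bGc$, Lemma \ref{openimplieswhole} applied to the good Wadge class $\bGc$ would give $X\in\bGc$, making $X$ selfdual and contradicting the non-selfduality of $\bG$. Applied verbatim with $Y$ in place of $X$, the same argument shows that $Y$ is everywhere properly $\bG$.

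Finally, I would translate the intrinsic meager/Baire dichotomy into the form required by Steel's theorem. Since $X$ is dense in $2^\omega$, any set nowhere dense in $X$ is also nowhere dense in $2^\omega$; hence if $X$ is meager as a space, it is meager in $2^\omega$, and likewise for $Y$. If instead $X$ and $Y$ are Baire spaces, then since $\AD$ implies $\BP$, Proposition \ref{bairecomeager} gives that both are comeager in $2^\omega$. All hypotheses of Theorem \ref{steeltheorem} are then satisfied, producing the required homeomorphism $h:2^\omega\longrightarrow 2^\omega$ with $h[X]=Y$. The only mildly delicate step is the ``properly'' half of the everywhere properly $\bG$ condition, but this reduces cleanly to Lemma \ref{openimplieswhole} applied to $\bGc$; the genuine difficulty has already been absorbed into Theorem \ref{classesofhomogeneousspaces}, Lemma \ref{goodimpliesreasonablyclosed}, and Steel's theorem itself.
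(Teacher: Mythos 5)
Your proposal is correct and follows essentially the same route as the paper: reduce to Steel's theorem via Theorem \ref{classesofhomogeneousspaces} and Lemma \ref{goodimpliesreasonablyclosed}, verify the everywhere properly $\bG$ condition using Proposition \ref{closureclopen} together with Lemma \ref{openimplieswhole} applied to $\bGc$, and handle the meager/comeager dichotomy via density and Proposition \ref{bairecomeager}. Your explicit check that $\bGc$ is again a good Wadge class (via $\PU_\xi(\bGc)=\widecheck{\PU_\xi(\bG)}$) is a detail the paper takes for granted, and it is correct.
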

\begin{proof}
Let $\bG=[X]$. Notice that $\bG$ is a good Wadge class by Theorem \ref{classesofhomogeneousspaces}, hence it is reasonably closed by Lemma \ref{goodimpliesreasonablyclosed}. It is clear that if $X$ and $Y$ are both meager spaces, then they are both meager in $2^\omega$. On the other hand, if $X$ and $Y$ are both Baire spaces, then they are comeager in $2^\omega$ by Proposition \ref{bairecomeager}. Hence, by Theorem \ref{steeltheorem}, it will be enough to show that $X$ and $Y$ are everywhere properly $\bG$. We will only prove this for $X$, since the proof for $Y$ is perfectly analogous. Pick $s\in 2^{<\omega}$. Using Proposition \ref{closureclopen}, one sees that $X\cap [s]\in\bG$. In order to get a contradiction, assume that $X\cap [s]\in\bGc$. Since $\bGc$ is also a good Wadge class, it follows from Lemma \ref{openimplieswhole} that $X\in\bGc$, which contradicts the fact that $\bG$ is non-selfdual.
\end{proof}

\begin{corollary}\label{main}
Assume $\AD$. Let $X$ be a zero-dimensional homogeneous space that is not locally compact. Then $X$ is strongly homogeneous.	
\end{corollary}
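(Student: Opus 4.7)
The plan is to apply Terada's Theorem \ref{pibase}: since $X$ is not locally compact (hence non-compact), it suffices to produce a base of $X$ consisting of clopen subsets each homeomorphic to $X$. I first reduce to the case where $X$ is a dense subspace of $2^\omega$. If $X$ is countable, homogeneity forces $X$ to be either discrete (excluded, being locally compact) or crowded, in which case $X\approx\QQQ$ is strongly homogeneous. When $X$ is uncountable, homogeneity forces $X$ to be crowded; after embedding $X$ in $2^\omega$, the closure is zero-dimensional compact and perfect, so it is a copy of $2^\omega$, and we may assume $X$ is dense in $2^\omega$. Under this embedding the clopen sets $X\cap[s]$ for $s\in 2^{<\omega}$ form a base of $X$, so it suffices to prove that $X\cap[s]\approx X$ for every $s\in 2^{<\omega}$.

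Now I split according to the complexity of $X$. If $X\in\Delta(\mathsf{D}_\omega(\mathbf{\Sigma}^0_2(2^\omega)))$, then $X$ is Borel of sufficiently low complexity that van Engelen's classification \cite[Corollary 4.4.6]{vanengelent} yields strong homogeneity directly. Otherwise Theorem \ref{classesofhomogeneousspaces} tells us that $\bG=[X]$ is a good Wadge class in $2^\omega$, and Lemma \ref{goodimpliesreasonablyclosed} then makes $\bG$ reasonably closed.

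In the main case, fix $s\in 2^{<\omega}$, choose a homeomorphism $\sigma:[s]\longrightarrow 2^\omega$, and set $Y_s=\sigma[X\cap[s]]$, a dense subset of $2^\omega$. The plan is to invoke Steel's Theorem \ref{steeltheorem} directly on $X$ and $Y_s$ in the class $\bG$ --- not the Uniqueness Theorem \ref{uniqueness}, since we do not know a priori that $Y_s$ is homogeneous. The verification has two components. First, that $X$ and $Y_s$ are everywhere properly $\bG$: for $X$ this is the argument already used in the proof of Theorem \ref{uniqueness}, combining Proposition \ref{closureclopen} (which gives $X\cap[t]\in\bG$ since $\bG\neq\{2^\omega\}$) with Lemma \ref{openimplieswhole} applied to the dual class $\bGc$ (which is also good, so if $X\cap[t]\in\bGc$ the lemma would force $X\in\bGc$, violating non-selfduality); for $Y_s$ the same reasoning reduces via Lemma \ref{relativization} to the same assertion about $X\cap V$ inside $[s]$, where $V=\sigma^{-1}[[t]]$ is a clopen of $[s]$. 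Second, that $X$ and $Y_s$ are either both meager in $2^\omega$ or both comeager there: Proposition \ref{meagerorbaire} splits on $X$ meager or Baire; in the meager case density transports meagerness to $2^\omega$ and then to $Y_s$; in the Baire case Proposition \ref{bairecomeager} (using $\BP$, which follows from $\AD$) transports comeagerness to $2^\omega$ and then to $Y_s$ via $\sigma$.

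Steel's theorem then delivers a homeomorphism $h:2^\omega\longrightarrow 2^\omega$ with $h[X]=Y_s$, yielding $X\approx Y_s\approx X\cap[s]$, and Theorem \ref{pibase} finishes the proof. The main obstacle is the verification of ``everywhere properly $\bG$'' for $Y_s$ without the aid of its homogeneity; this works precisely because goodness is preserved under duality, so Lemma \ref{openimplieswhole} still applies to $X$ itself after transferring through $\sigma$ via the relativization machinery of Lemma \ref{relativization}.
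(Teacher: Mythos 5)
Your proposal is correct and follows the paper's proof in all essentials: reduce to a dense subspace of $2^\omega$, dispose of the case $X\in\Delta(\mathsf{D}_\omega(\mathbf{\Sigma}^0_2(2^\omega)))$ via van Engelen, and otherwise combine Theorem \ref{classesofhomogeneousspaces}, Lemma \ref{goodimpliesreasonablyclosed}, Steel's Theorem \ref{steeltheorem}, and Terada's Theorem \ref{pibase}. The only deviation is that you apply Steel's theorem directly to $X$ and $Y_s$, checking that $Y_s$ is everywhere properly $\bG$ by relativizing each clopen piece back to $X$ (thus never needing $Y_s$ to be homogeneous), whereas the paper notes that $Y$ is homogeneous, proves $[X]=[Y]$, and cites Theorem \ref{uniqueness} --- a packaging difference only, since Theorem \ref{uniqueness} is proved by exactly the Steel application you carry out.
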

\begin{proof}
Notice that $X$ is crowded, otherwise it would be discrete by homogeneity. Therefore, we can assume without loss of generality that $X$ is a dense subspace of $2^\omega$. If $X\in\Delta(\mathsf{D}_\omega(\mathbf{\Sigma}^0_2(2^\omega)))$, then the desired result follows from \cite[Corollary 4.4.6]{vanengelent}. So assume that $X\notin\Delta(\mathsf{D}_\omega(\mathbf{\Sigma}^0_2(2^\omega)))$.

By Theorem \ref{pibase}, it will be enough to show that $X\cap [s]\approx X$ for every $s\in 2^{<\omega}$. Pick $s\in 2^{<\omega}$. Let $h:[s]\longrightarrow 2^\omega$ be a homeomorphism, and let $Y=h[X\cap [s]]$. It is easy to check that $Y$ is a homogeneous dense subspace of $2^\omega$. Furthermore, it is clear that $X$ and $Y$ are either both meager spaces or both Baire spaces. We claim that $[X]=[Y]$. By Theorem \ref{uniqueness}, this will conclude the proof.

Set $\bG=[X]$, and observe that $\bG$ is a good Wadge class by Theorem \ref{classesofhomogeneousspaces}. In particular, $\bG$ is non-selfdual. Hence, by Corollary \ref{hausdorffevery}, we can fix $D\subseteq\PP(\omega)$ such that $\bG=\bG_D(2^\omega)$. Notice that $X\cap [s]\in\bG_D([s])$ by Lemma \ref{relativization}.\ref{subspace}, hence $Y\in\bG_D(2^\omega)$ by Lemma \ref{relativization}.\ref{homeomorphism}. This shows that $[Y]\subseteq [X]$. In order to prove the other inclusion, by Lemma \ref{wadgelemma}, it will be enough to show that $Y\notin\bGc_D(2^\omega)$. Assume, in order to get a contradiction, that $Y\in\bGc_D(2^\omega)$. Then $X\cap [s]\in\bGc_D([s])$ by Lemma \ref{relativization}.\ref{homeomorphism}. It follows easily from Lemma \ref{relativization}.\ref{subspace} and Proposition \ref{closureclopen} that $X\cap [s]\in\bGc_D(2^\omega)=\bGc$. This implies that $X\in\bGc$ by Lemma \ref{openimplieswhole}, which contradicts the fact that $\bG$ is non-selfdual.
\end{proof}

\end{document}